\newcommand{\am}{\text{argmin}}
\newcommand{\un}{\mathbbm{1}}
\newcommand{\y}{{\bf y}}
\newcommand{\lip}{\mathcal L \textit{ip}}
\DeclareMathOperator{\tr}{Tr}
\DeclareMathOperator{\diag}{Diag}
\DeclareMathOperator{\dimm}{dim}
\DeclareMathOperator{\vect}{Vect}
\def\restriction#1#2{\mathchoice
              {\setbox1\hbox{${\displaystyle #1}_{\scriptstyle #2}$}
              \restrictionaux{#1}{#2}}
              {\setbox1\hbox{${\textstyle #1}_{\scriptstyle #2}$}
              \restrictionaux{#1}{#2}}
              {\setbox1\hbox{${\scriptstyle #1}_{\scriptscriptstyle #2}$}
              \restrictionaux{#1}{#2}}
              {\setbox1\hbox{${\scriptscriptstyle #1}_{\scriptscriptstyle #2}$}
              \restrictionaux{#1}{#2}}}
\def\restrictionaux#1#2{{#1\,\smash{\vrule height .8\ht1 depth .85\dp1}}_{\,#2}} 
\newcounter{ctheorem}
\newtheorem{theorem}[ctheorem]{Theorem}
\newtheorem{assumption}{Assumption}
\definecolor{myblue}{RGB}{0,70,110}
\newproof{proof}{Proof}
\newcounter{cexample}
\newtheorem{example}[cexample]{Example}
\newcounter{cproposition}
\newtheorem{proposition}[cproposition]{Proposition}
\newcounter{ccorollary}
\newtheorem{corollary}[ccorollary]{Corollary}
\newcounter{cconjecture}
\newtheorem{conjecture}[cconjecture]{Claim}
\newcounter{clemma}
\newtheorem{lemma}[clemma]{Lemma}
\newcounter{cdefinition}
\newtheorem{definition}[cdefinition]{Definition}
\newtheorem{defpro}[cdefinition]{Definition/Proposition}
\newcounter{cremark}
\newtheorem{remark}[cremark]{Remark}
\pgfplotsset{compat=1.11}
\begin{document}

\begin{frontmatter}

\title{A Concentration of Measure Framework to study convex problems and other implicit formulation problems in machine learning}




\author[gipsa]{Cosme Louart}
\ead{cosmelouart@gmail.com}




\begin{abstract}
This paper provides a framework to show the concentration of solutions $Y^*$ to convex minimizing problem where the objective function $\phi(X)(Y)$ depends on some random vector $X$ satisfying concentration of measure hypotheses. More precisely, the convex problem translates into a contractive fixed point equation that ensure the transmission of the concentration from $X$ to $Y^*$. This result is of central interest to characterize many machine learning algorithms which are defined through implicit equations (e.g., logistic regression, lasso, boosting, etc.). Based on our framework, we provide precise estimations for the first moments of the solution $Y^*$, when $X= (x_1,\ldots, x_n)$ is a data matrix of independent columns and $\phi(X)(y)$ writes as a sum $\frac{1}{n}\sum_{i=1}^n h_i(x_i^TY)$. That allows to describe the behavior and performance (e.g., generalization error) of a wide variety of machine learning classifiers.

\end{abstract}  

\begin{keyword}
Random matrix theory \sep Concentration of measure \sep convex problems \sep fixed point equation \sep robust regression.
\end{keyword}

\end{frontmatter}
\tableofcontents
\section*{Introduction}
In modern statistical learning approaches, many optimization problems can be formulated in terms of convex minimization problem or fixed point equation (soft-max classification, M-estimators, low density classifiers, soft snm, logistic regression, optimal transport, adaboost, stochastic gradient descent...). As such, a theoretical characterization of the behavior of such objects becomes crucial in order to assess the performances of the underlying algorithms. 
This paper has two main objectives: first it provides qualitative inferences concerning the statistical concentration of the parameter vector learned during the resolution of the problem and second, some quantitative inferences on a restricted range of regularized empirical risk minimization problems (see \cite{mai2019high}) that write:
\begin{align*}
   \text{Minimize}: \frac{1}{n} \sum_{i=1}^n h_i(x_i^TY) + \lambda \|Y\|^2.
 \end{align*} 
 In this article we stick to $\ell^2$-norm regularization although other kind of regularizations can be found in the literature \cite{celentano2022fundamental}. We however believe in possible generalization of our approach as it was done very recently for the Lasso problem in \cite{tiomoko2022deciphering}.
The class of so called ``ridge regularized robust regression estimators'' (\cite{bean2013optimal,ELK13b}) contains in particular softmax classifiers that were already studied in \cite{mai2019large,SED21}.
There are been continuous work made around this range of problems with different kind of techniques, approximate message passing in \cite{sur2019modern,donoho2016high}, convex Gaussian minmax Theorem in \cite{thrampoulidis2020theoretical,deng2022model}, Meanfield Theory in \cite{mignacco2020dynamical} or Replica Method in \cite{mignacco2020role,saglietti2022solvable}. As a naive comparison to all these works we wish to emphasize the simplicity of the fixed point equation providing the first statistics of $Y$ given here in Claim~\ref{conj:fixed_point_approximation}. 
But the more important contribution is probably a rigorous justification of the leave-one-out technique introduced by El Karoui, they rely on new random matrix theory findings \cite{LOU21HV} concerning the study of matrices $\frac{1}{n}XDX^T$ when $D$ is random and depends on $X$.

Our probabilistic approach, following the one of \cite{ELK13b}, is inspired from Random matrix theory and formulate its hypotheses thanks to the Concentration of Measure Theory (CMT) that demonstrates an interesting flexibility allowing $(i)$ to characterize realistic setting where, in particular, the hypothesis of independent entries is relaxed $(ii)$ to provide rich concentration inequalities with precise convergence bounds (this second aspect was not explored in \cite{ELK13b}). 
To present the simpler picture possible, we will admit in this introduction that what we call for the moment "concentrated vectors" are transformation $F(Z)$ of a Gaussian vector $Z \sim \mathcal N(0,I_d)$ for a given $1$-Lipschitz (for the euclidean norm) mapping $F : \mathbb R^d \to \mathbb R^p$.
A central result of CMT (see for instance \cite[Corollary 2.6]{LED05}) states that 
for any $\lambda$-Lipschitz mapping $f: \mathbb R^d \to \mathbb R$ (where $\mathbb R^d$ and $\mathbb R$ are respectively endowed with the euclidean norm $\|\cdot\|$ and with the absolute value $|\cdot|$):
\begin{align}\label{eq:first_concentration_inequality}
  \forall t >0 : \mathbb P \left(\left\vert f(Z) - \mathbb E[f(Z)]\right\vert \geq t\right) \leq C e^{-(t/\sigma\lambda)^2},
\end{align}
where $C=2$ and $\sigma = \sqrt 2$ (they do not depend on the dimensions $d$ !).

To give a partial but somehow convincing justification for the adoption of this assumption as a basis to machine learning theoretical inferences, we resort to the example of Generative adversarial neural network (GAN) that are able to construct realistic image as Lipshitz transformation of a Gaussian noise, demonstrating this way that some distribution of concentrated  random vectors are very close to the data encountered in machine learning problems (\cite{SED20}).

\section{Main results}
\subsection{Theoretical results}
The paper provides several theoretical results of concentration of solution of random fixed point equation of progressive difficulty, we present below one of the more general and efficient result, Corollary~\ref{cor:concentration_lipschitz_solution_point_fixe2}. Given two normed vector space $(E, \|\cdot\|_E)$ and $(F, \|\cdot\|_F)$, we note $\mathcal F(E,F)$ the set of mappings from $E$ to $F$ and we endow it with the family of semi-norms $(\|\cdot \|_{\mathcal B(y, r)})_{x \in E, r>0}$ defined for any $r>0$ and any $y_0\in \mathbb R^p$ as:
\begin{align*}
    \forall f \in  \mathcal F(\mathbb R^p): \ \
    \|f \|_{\mathcal B(y_0, r)} = \sup \{\|f(y)\|_F, \|y - y_0\|_E \leq r\}
\end{align*}
Let us start with a result of concentration of minimizing solutions to convex problems depending on a random matrix (more diverse and general settings are provided in Section~\ref{sec:concentration_of_the_solutions_to_conc_eq}).
\begin{theorem}
  Given a sequence $p_n \in \mathbb N^{\mathbb N}$ such that $p_n \leq O(n)$, a family of random vectors $X_n \in \mathcal M_{p_n,n}$ and a mapping $\Phi_n : \mathcal M_{p_n,n} \to \mathcal F(\mathbb R^{p_n},\mathbb R)$ such that there exist some constants $c, C_1, C_2, C_3>0$ and some parameters $\kappa>0, \sigma_n>0$ satisfying for any $n \in \mathbb N$:
\begin{itemize}
    \item $\forall M \in \mathcal M_{p_n,n}$, $\Phi_n(M)$ is a twice differentiable strictly convex mapping of bounded Hessian,
    \item with the classical order relation on the set of symmetric matrices, for all $M \in \mathcal M_{p_n, n}$ and $y \in \mathbb R^{p_n}$, $\restriction{d^2(\Phi_n(M))}{y}\geq \kappa_n$,
    \item there exists a deterministic vector $y_0\in \mathbb R^{p_n}$ such that a.s.:
    \begin{align*}
      \delta \equiv \max \left( \sigma \|X\|, \|\restriction{d\Phi(X)}{y_0}\| \right) \leq C_2
    \end{align*}
    \item for any $1$-Lipschitz mapping $f: \mathcal M_{p_n,n} \to \mathbb R$, and any $t>0$:
    \begin{align}\label{eq:concentration_X_main_result}
        \mathbb P \left( \left\vert f(X) - \mathbb E[f(X)] \right\vert \geq t \right) \leq C_1 e^{-ct^2}.
    \end{align}
    \item the mapping $M \mapsto (y \mapsto \restriction{d\Phi_n(M)}{y})$ is $C_3/\sqrt n$-Lipschitz from $(\mathcal M_{p_n,n}, \|\cdot \|_F)$ to $(\mathcal F(\mathbb R^{p_n}, \mathbb R^{p_n}), \|\cdot\|_{\mathcal B(y_0, \frac{\delta}{\kappa_n})})$
\end{itemize}
Then, introducing $Y_n \equiv \text{argmin}_{y\in\mathbb R^{p_n}} \Phi_n(X_n)(y) \in \mathbb R^{p_n}$, there exist two constant $c', C' >0$ such that for all $1$-Lipschitz mapping $f: \mathcal M_{p_n,n} \times \mathbb R^{p_n} \to \mathbb R$, and any $t>0$
    \begin{align}\label{eq:concentration_Y_main_result}
        \mathbb P \left( \left\vert f \left( \frac{\sigma_n}{\kappa_n}X_n, Y_n \right) - \mathbb E \left[  f \left( \frac{\sigma_n}{\kappa_n}X_n, Y_n \right) \right] \right\vert \geq t \right) \leq C' e^{-c'n (t/\kappa)^2} 
    \end{align}

  \end{theorem}
 In the third part of the paper this theorem is applied to show the concentration of the solution of the regularized robust regression defined for convex settings, we display below a rewriting of Corollary~\ref{cor:concentration_queue_Y}.
 
 \begin{corollary}\label{cor:concentration_solution_convex_problem_main_result}
   We consider a three differentiable convex mappings $h_i: \mathbb R \to \mathbb R$, $i\in[n]$, such that $\| h_i'\|_\infty,\| h_i{(2)}\|_\infty, \| h_i^{(3)}\|_\infty \leq \infty$, a sequence $p_n \leq O(n)$ and a sequence of random vectors $X_n=(x_i,\ldots,x_n) \in \mathcal M_{p_n,n}$ such that there exists three constants $K,C,c>0$ satisfying $\sup_{1\leq i \leq n} \| \mathbb E [x_i] \| \leq K $ and \eqref{eq:concentration_X_main_result} for any $1$-Lipschitz mapping $f: \mathbb R^p \to \mathbb R$. 
   If we note $Y_n\in \mathbb R^p$, solution to:
   \begin{align}
       \text{Minimize} \ \ \frac{1}{n} \sum_{i=1}^n h_i(x_i^T Y) + \|Y\|^2,  \ Y \in \mathbb R^p,
   \end{align}
   then for any $n \in \mathbb N$, and any $f : \mathbb R^{p_n} \to \mathbb R$, there exist some constants $c',C'>0$ such that:
   \begin{align*}
       \forall n \in \mathbb N, \forall t>0: \  \mathbb P \left(\left\vert f(Y_n) - \mathbb E[f(Y_n)] \right\vert \geq t \right) \leq  C' e^{-c'n t^2}+ C' e^{-c'n t^{2/3}}.
   \end{align*}
 \end{corollary}

Once the concentration of $Y$ is proven, one is led to try and estimate its first statistics.
For that purpose, one first needs to disentangle the contribution of each $x_i$ in\footnote{To give a arguably valid justification, we mention that if one can write $x_iY$ as a functional $\xi_i(x_iY_{-i})$ for $Y_{-i}$ independent of $i$, then one can resort to Stein identities to estimate $f(\xi_i(x_iY_{-i})x_i$.} $Y = \frac{1}{n}\sum_{i=1}^n f_i(x_iY)x_i$ thanks to the so-called ``leave-one-out'' technique that introduces the vector:
\begin{align}\label{eq:def_Y_m_i}
   Y_{-i} = \frac{1}{n} \sum_{\genfrac{}{}{0pt}{2}{j=1}{j \neq i}}^n f_i(a_j^TY_{-i})x_j,
\end{align} 
The link between $Y$ and $Y_{-i}$ is provided by the following formula that was already given in \cite{MAI19} but whose full justification (relying on concentration of measure and complex random matrix theory inferences taken from \cite{LOU21HV}) is an original contribution of this paper (the following result is a composition of Propositions~\ref{pro:point_fixe_x_iY} and~\ref{pro:lien_Y_Y_mi}).
\begin{proposition}\label{pro:lien_Y_Y_mi_main result}

  In the setting of Corollary~\ref{cor:concentration_solution_convex_problem_main_result}, for any $\delta \in \mathcal D_n$, let us note $\tilde Q^{\delta} \equiv (I_p - \frac{1}{n} \sum_{i=1}^n \mathbb E \left[ \frac{h_i^{(2)}(x_i^TY)}{1 - \delta_i h_i^{(2)}(x_i^TY)}  \right]\Sigma_i)^{-1}$ where $\Sigma_i \equiv \mathbb E[x_ix_i^T]$. The equation $\delta = \frac{1}{n} \diag_{i\in [n]}(\tr(\Sigma_i \tilde Q^\delta))$ has a unique solution that we note $\Delta$, and we have the concentration:
  \begin{align*}
    \forall t >0: \mathbb P \left( \left\vert x_i^TY - x_i^TY_{-i}-\Delta_i h_i'(x_i^TY) \right\vert\geq t \right) \leq C e^{- cnt^2/\log n} + C e^{-cn},
  \end{align*}
  for some constants $C,c>0$ independent of $n$.
\end{proposition}
This proposition leads us to introduce the mapping $\xi_i$ defined for any $z \in \mathbb R$ with the fixed point equation (see Proposition~\ref{pro:point_fixe_x_iY})
 \begin{align}\label{eq:point_fixe_xi}
    \xi_i(z) = h_i'(z + \Delta_i \xi_i(z)), &
    & \xi_i(z) \in \mathbb R,
  \end{align}
  then one can show (Proposition~\ref{pro:point_fixe_x_iY} again) that $h_i'(x_i^T Y) \in \xi_i(x_i^T Y_{-i})\pm \mathcal E_1 \left(\frac{1}{\sqrt n}\right)$. Finally, noting that $\xi_i'(t) = \frac{h_i^{(2)}(\zeta_i(t))}{ 1  - \Delta_i h_i^{(2)}(\zeta_i(t))}$ (where $\zeta_i(t)$ satisfies $\zeta_i(t) = t + \Delta_i h_i'(\zeta_i(t))$), one is lead to deduce the following result that could not be demonstrated (although it was validated with practical examples as one can see in next subsection).
\begin{conjecture}\label{conj:fixed_point_approximation}
  Considering sequence of Gaussian vectors $X_n = (x_1, \ldots, x_n)\in \mathcal M_{p_n,n}$ and convex twice differentiable mappings $h_i: \mathbb R \to \mathbb R$ satisfying the hypotheses of of Corollary~\ref{cor:concentration_solution_convex_problem_main_result}, if we note $m_i \equiv \mathbb E[x_i]$, $\Sigma_i = \mathbb E[x_ix_i^T]$, for all $n \in \mathbb N$, the unique vector $ m_Y$ satisfying:
  \begin{itemize}
    \item $m_Y = \sum_{i=1}^n \xi_i(m_i^T m_Y)\tilde Qm_i$
    \item $\tilde C = \sum_{i=1}^n \xi_i'(m_i^T  m_Y) C_i$
    \item $\tilde Q = (I_p - \tilde C)^{-1}$
    \item $\xi_i:\mathbb R \to \mathbb R$ is defined for any $z \in \mathbb R$ as the unique solution to $\xi_i(z) = h_i' \left( z + \frac{1}{n}\tr(C_i \tilde Q) \xi_i(z)  \right)$
  \end{itemize}
  Besides, if we note $\tilde \Sigma = \sum_{i=1}^n \xi_i(m_i^T  m_Y)^2 \Sigma_i$, the solution to the minimizing problem:
  \begin{align*}
      \text{Minimize:}\ \  \sum_{i=0}^n h_i(x_i^T Y) + \|y\|^2,&
      &\ y \in \mathbb R^p,
  \end{align*}  
  satisfies the estimation:
  \begin{align*}
      \left\Vert \mathbb E[Y ] -   m_Y\right\Vert \leq O \left(\frac 1{\sqrt n}\right)&
      &\text{and}&
      &\left\Vert \mathbb E[YY^T ] -  \tilde Q \Sigma \tilde Q\right\Vert_* \leq O \left(\frac 1{\sqrt n}\right)
  \end{align*}
\end{conjecture}

We were not able to prove this claim that however appears right on different practical examples. We give from Subsection~\ref{sse:estimation_gaussian_setting} and in the next subsections some reachable inferences in the Gaussian case, however this approach has two main issues:
\begin{itemize}
  \item we could not obtain the existence and uniqueness to the obtained fixed point equation
  \item the fixed point equation in question relies on the commutation of integrals $\mathbb E[\xi_i(z)]$ for $z \sim \mathcal N(m_z,v_z)$ and has therefore a very slow rate of convergence
\end{itemize}
Those two reasons make us think that the Gaussian approach is not the good one and that one has to find under the general assumption of concentrated vectors a supplementary equation (giving a link between $\mathbb E[\xi_i[x_i^T Y_{-i}]$ and $\mathbb E[Y]$ for instance) that will allow us to prove Conjecture~\ref{conj:fixed_point_approximation}. We give in next subsection some illustration of the validity of Conjecture~\ref{conj:fixed_point_approximation}.

\subsection{Application to the logistic regression}\label{sse:cas_X_gaussien}
To illustrate our theoretical results in a simple way we present the example of a supervised classification method called the ``the logistic regression'' already studied by \cite{MAI19}.
Considering a deterministic vector $m \in \mathbb R^p$ and a positive symmetric matrix $C \in \mathcal M_{p}$, we suppose we are given $n$ Gaussian random vectors $z_1,\ldots, z_n$, each one following the law $\mathcal N(y_im, C)$ where $y_1,\ldots, y_n \in \{-1,1\}$ are the ``labels'' of $z_i$ that determine the classes of our data. We assume for simplicity that the classes are balanced. Our classification problem aims at deducing from the \textit{training set} $z_1,\ldots, z_n$ and the labels $y_1,\ldots y_n$ a statistical characterization of our two classes that will allow us to classify a new coming data $x$, independent with the training set and following one or the other law. To introduce the problem, let us express the probability conditioned on a new data $x$, and knowing that $z_i$ is Gaussian, that $y_i = y$, for a given $y \in \{0,1\}$:
\begin{align*}
  \mathbb P(y_i = y \ | \ z_i) 
  &= \frac{\mathbb P(y_i = y)\mathbb P(z_i \ | \ y_i = y)}{\mathbb P(y_i = y)\mathbb P(z_i \ | \ y_i = y) + \mathbb P(y_i = -y)\mathbb P(z_i \ | \ y_i = -)}\\
  &= \frac{e^{-(z_i-ym)^TC^{-1}(z_i-ym)/2}}{e^{-(z_i-ym)^TC^{-1}(z_i-ym)/2}+e^{-(z_i+ym)^TC^{-1}(z_i+ym)/2}}\\
  &=\frac{1}{1+e^{-yz_i^TC^{-1}m}} = \sigma(yz_i^T \beta^*),
\end{align*}
noting $\sigma : t \mapsto 1/(1+e^{-t})$ and $\beta^* \equiv C^{-1}m$. The goal of the logistic regression is to try and estimate $\beta$ to be able to classify the data depending on the highest value of $\mathbb P(y_i = 1 \ | \ z_i) $ and $\mathbb P(y_i = -1 \ | \ z_i) $. For that purpose, we solve a regularized maximum likelihood problem:
\begin{align*}
  \min_{\beta \in \mathbb R^p} \frac{1}{p} \sum_{i=1}^n \rho(\beta^Tx_i) + \frac{\lambda}{2} \| \beta \|^2
\end{align*}
where $\rho(t) = \log(1+e^{-t})$, $x_i = y_i z_i$ and $\lambda>0$ is the regularizing parameter. Differentiating this minimizing problem, we obtain:
\begin{align*}
  \beta = \frac{1}{\lambda n} \sum_{i=1}^n f(x_i^T \beta) x_i,
\end{align*}
where $f : t \mapsto \frac{1}{1+e^t}$. If one chooses $\lambda$ sufficiently big, the assumptions of Corollary~\ref{cor:concentration_solution_convex_problem_main_result} are all satisfied, our results thus allow us to set the concentration $\beta \propto \mathcal E_2(1/\sqrt n) $ and estimate its first statistics.
The system of equations provided in Conjecture~\ref{conj:fixed_point_approximation} can be solved by successive iteration, we can then deduce the performances of the algorithm from the statistics of $Y$. We depicted on Figure~\ref{fig:soft_max_prediction} some of these predictions.
\\

\vspace{1cm}

  \begin{figure}[h]
  \centering
  \begin{tikzpicture}[font=\footnotesize]
    \renewcommand{\axisdefaulttryminticks}{4}
    \pgfplotsset{every axis legend/.append style={cells={anchor=west},fill=white, at={(0.7,1)}, anchor=north east, font=\scriptsize}}
    \begin{axis}[
    height=0.62\linewidth,
    width=1\linewidth,
      xmode=log,
      log basis x ={2},
      grid=major,
      ymajorgrids=false,
      scaled ticks=true,
      xlabel={Regularization parameter $\gamma$  \ \ $\rightarrow$ bias},
      ylabel={Misclassification rate}
      ]
      \addplot[only marks,mark = o,mark size=3pt,color=red!60!white,line width=1.5pt] plot coordinates{
      (0.00012207031250000008, 0.246)(0.00024414062500000016, 0.2447)(0.00048828124999999995, 0.2458)(0.0009765625, 0.2462)(0.001953125, 0.2512)(0.003906250000000001, 0.2464)(0.007812500000000002, 0.2454)(0.015625000000000007, 0.252)(0.03125, 0.2523)(0.0625, 0.2467)(0.12500000000000003, 0.2466)(0.25, 0.2578)(0.5, 0.2603)(1.0, 0.269)(2.0, 0.2657)(4.0, 0.2932)(8, 0.3125548726953468)
      };
      \addlegendentry{{ Empirical results class 1 }}
      \addplot[densely dashed,blue!60!white,line width=1pt] plot coordinates{
     (0.00012207031250000008, 0.25394)(0.00024414062500000016, 0.24963)(0.00048828124999999995, 0.24882)(0.0009765625, 0.24788)(0.001953125, 0.24681)(0.003906250000000001, 0.2465)(0.007812500000000002, 0.2464)(0.015625000000000007, 0.2499)(0.03125, 0.24706)(0.0625, 0.24637)(0.12500000000000003, 0.24733)(0.25, 0.24991)(0.5, 0.25603)(1.0, 0.26289)(2.0, 0.27348)(4.0, 0.29068)(8, 0.30824)
      };
      \addlegendentry{{ Theoretical results  class 1 } }
      \addplot[only marks,mark = x,mark size=3pt,color=red!60!white,line width=1.5pt] plot coordinates{
      (0.00012207031250000008, 0.08650000000000002)(0.00024414062500000016, 0.0887)(0.00048828124999999995, 0.08760000000000001)(0.0009765625, 0.09079999999999999)(0.001953125, 0.08150000000000002)(0.003906250000000001, 0.0877)(0.007812500000000002, 0.08930000000000005)(0.015625000000000007, 0.08789999999999998)(0.03125, 0.08540000000000003)(0.0625, 0.08579999999999999)(0.12500000000000003, 0.0826)(0.25, 0.08599999999999997)(0.5, 0.09260000000000002)(1.0, 0.09899999999999998)(2.0, 0.11680000000000001)(4.0, 0.13390000000000002)(8, 0.17172958735733101)
      };
      \addlegendentry{{ Empirical results  class 2}}
      \addplot[smooth,blue!60!white,line width=1pt] plot coordinates{
      (0.00012207031250000008, 0.08455999999999997)(0.00024414062500000016, 0.08716000000000002)(0.00048828124999999995, 0.08731999999999995)(0.0009765625, 0.08645000000000003)(0.001953125, 0.08675999999999995)(0.003906250000000001, 0.08689999999999998)(0.007812500000000002, 0.08645999999999998)(0.015625000000000007, 0.08797999999999995)(0.03125, 0.08565999999999996)(0.0625, 0.08777999999999997)(0.12500000000000003, 0.08733999999999997)(0.25, 0.08984000000000003)(0.5, 0.09345999999999999)(1.0, 0.10050999999999999)(2.0, 0.11434)(4.0, 0.13632)(8, 0.16113)
      };
      \addlegendentry{{ Theoretical results  class 2} }
    \end{axis}
  \end{tikzpicture}

   \caption{Misclassification rate for $p = 128$, $n = 512$,  $x_i \sim \mathcal N(y_i \mu, \Sigma)$, $\Sigma = 2 I_p$, $m_+ = (1,1,0,...)$, $m_- = - m_+$, for different regularizing parameters. Mean taken on $1000$ drawings for the estimation of the empirical misclassification error.}     
    \label{fig:soft_max_prediction}
    \end{figure}

Our work is organized as follow, we first provide the notation and basic results of the concentration of measure framework at the basis of our approach (more details are provided in Appendix~\ref{app:concentration_of_the_measure}). In a second section we present the theoretical results providing the concentration of solutions of contractive fixed point equations and we then apply those results in the third and last section to set the concentration and estimate the solution to some regularized robust regression problems.

\section{Concentration of Measure Theory framework}

We choose here to work with normed (or semi-normed) vector spaces, although CMT is classically developed in metric spaces and we adopt the viewpoint of \textit{Levy families} where the goal is to track the influence of the vector dimension over the concentration. Specifically, we are given a sequence of random vectors $(Z_p)_{p\geq \mathbb N}$ where each $Z_p$ belongs to a space of dimension $p$ (typically $\mathbb R^p$) and we want to obtain inequalities of the form:
\begin{align}\label{eq:inegalite_de_concentration}
  \forall p \in \mathbb N, \forall t>0 : \mathbb P \left(\left\vert f_p(Z_p) - a_p\right\vert\geq t\right) \leq \alpha_p(t),
\end{align}
where for every $p \in \mathbb N$, $\alpha_p : \mathbb R^+ \rightarrow [0,1]$ 
 and $f_p : \mathbb R^p \rightarrow \mathbb R$ is a $1$-Lipschitz function, and $a_p$ is either a deterministic variable (typically $\mathbb E[f_p(Z_p)]$) or a random variable (for instance $f_p(Z_p')$ with $Z_p'$ an independent copy of $Z_p$).
 The examples studied here will just admit exponentially decreasing $\alpha_p$ having the form $Ce^{-c(t/\sigma_p)^q}$ for $q>0$ or a sum of such terms. Since only the asymptotic behavior interests us ($p$ high), we will use the short notation $\mathcal E_q(\sigma_p)$ that is described below and allows to omit the constant term $C$ and $c$.



We define here two classes of concentrated vectors depending on the regularity of the class of sequences of functions $(f_p)_{p\in \mathbb N}$ satisfying \eqref{eq:inegalite_de_concentration}. 
When \eqref{eq:inegalite_de_concentration} holds for all the $1$-Lipschitz mappings $f_p$, $Z_p$ is said to be \textit{Lipschitz concentrated}; when true for all $1$-Lipschitz \textit{and linear} mappings $f_p$, $Z_p$ is said to be \textit{linearly concentrated}. This latter notion is weaker but still relevant in some practical settings (see Propositions~\ref{pro:concentration_X_y_y'}) where Lipschitz concentration can not be obtained with good concentration parameter; it is besides a sufficient assumption to control the norm (see Proposition~\ref{pro:tao_conc_exp}). We present in this section necessary notations and basic inferences result a more detailed presentation is provided in Appendix~\ref{app:concentration_of_the_measure}. 




\begin{defpro}\label{def:concentrated_sequence}
  Given a sequence of normed vector spaces $(E_p, \Vert \cdot \Vert_p)_{p\geq 0}$, a sequence of random vectors $(Z_p)_{p\geq 0} \in \prod_{p\geq 0} E_p$\footnote{A random vector $Z$ of $E$ is a measurable function from a probability space $(\Omega, \mathcal F, \mathbb P)$ to the normed vector space $(E, \|\cdot\|)$ (endowed with the Borel $\sigma$-algebra); one should indeed write $Z: \Omega \to E$, but we abusively simply denote $Z\in E$.}, a sequence of positive reals $(\sigma_p)_{p\geq 0} \in \mathbb R_+ ^{\mathbb N}$ and a parameter $q>0$, we say that $Z_p$ is Lipschitz \emph{$q$-exponentially concentrated} with an \emph{observable diameter} of order $O(\sigma_p)$ iff one of the two following equivalent assertions is satisfied:
  \begin{itemize}
     \item $\exists C,c>0 \ | \ \forall p \in \mathbb N, \forall \ 1\text{-Lipschitz} \ f : E_p \rightarrow \mathbb{R}, \forall t>0:$\footnote{one could replace below $\mathbb E[f(Z_p)]$ with $f(Z_p)$ or with $m_{f}$ where $Z_p'$ is an independent copy of $Z_p$ and $m_{f}$ is a median of $f(Z_p)$ (it satisfies $\mathbb P \left(f(Z_p) \geq m_{f}\right), \mathbb P \left(f(Z_p) \leq m_{f}\right) \geq \frac{1}{2}$)} 
     \begin{center}
      $\mathbb P \left(\left\vert f(Z_p) - \mathbb E[f(Z_p)]\right\vert\geq t\right) \leq C e^{-(t/c\sigma_p)^q}$, 
     \end{center}
     \item $\exists C,c>0 \ | \ \forall p \in \mathbb N, \forall \ 1\text{-Lipschitz} \ f : E_p \rightarrow \mathbb{R}, \exists \bar f \in \mathbb R \ | \ \forall t>0:$ 
     \begin{center}
      $\mathbb P \left(\left\vert f(Z_p) - \bar f\right\vert\geq t\right) \leq C e^{-(t/c\sigma_p)^q}$, 
     \end{center}
   \end{itemize} 
   We denote in this case $Z_p \propto \mathcal E_{q}(\sigma_p)$ (or more simply $Z \propto \mathcal E_{q}(\sigma)$). If $\sigma = O(1)$, we simply write $Z_p \propto \mathcal E_{q}$. 

   If the two upper inequalities are just satisfied by linear $1$-Lipschitz functionals, we say that $Z_p$ is linearly concentrated and we note $Z \in \mathcal E_q(\sigma)$. For this case it is relevent to introduce the notion of deterministic equivalent that is a sequence of deterministic vectors $\tilde Z_p \in \Pi_{p\geq 0} E_p$ satisfying:
  \begin{align*}
    \forall t>0: \ 
      \mathbb P \left(\left\vert f(Z_p ) - f(\tilde Z_p)\right\vert\geq t\right) \leq C e^{-(t/c\sigma_p)^q}.
  \end{align*}
  If this holds, we write $Z \in \tilde Z \pm \mathcal E_q(\sigma)$ and when we are just interested in the size of the deterministic equivalent, we employ the notation $Z \in O(\theta) \pm \mathcal E_q(\sigma)$ if $\|\tilde Z\|\leq O(\theta)$.
\end{defpro}
The equivalence between the two definition of the Lipschitz and linear concentration is proven in \cite{LOU20}, principally thanks to results issued from \cite{LED05}.
A simple but fundamental consequence of Definition~\ref{def:concentrated_sequence} is that the Lipschitz concentrated vector class is stable through Lipschitz transformation and the linearly concentrated vectors class is stable through affine transformation. The Lipschitz coefficient of the transformation controls the concentration (see Propositions~\ref{pro:stabilite_lipschitz_mappings} and~\ref{pro:stabilite_concentration_lineaire_affine}).

There exists a wide range of concentrated random vectors that can be found for instance in \citep{LED05}. We recall below some of the major examples of Lipschitz concentrated random vectors (therefore also linearly concentrated) that could all be taken as assumption for the results presented at the end of this paper. In the following theorems, we only consider sequences of random vectors of the vector spaces $(\mathbb R^p, \left\Vert \cdot\right\Vert)$. We will omit the index $p$ to simplify the readability of the results.
\begin{theorem}[Fundamental examples of concentrated vectors]\label{the:concentration_vecteur_gaaussien}
\begin{sloppypar} The following sequences of random vectors are concentrated and satisfy $ Z \propto \mathcal E_2$: 
\begin{itemize}
  \item $Z$ is uniformly distributed on the sphere $\sqrt p \mathbb S^{p-1}$.
  \item $Z \sim \mathcal N(0,I_p)$ has independent Gaussian entries.
  \item $Z$ is uniformy distributed on the ball $\sqrt p\mathcal B = \{x \in \mathbb R^p, \Vert x \Vert \leq \sqrt p\}$.
  \item $Z$ is uniformy distributed on $[0,\sqrt p]^p$.
  \item $Z$ has the density $d\mathbb P_Z(z) = e^{-U(z)}d\lambda_p(z)$ where $U : \mathbb R^p \rightarrow \mathbb R$ is a positive functional with Hessian bounded from below by, say, $cI_p$ with $c=O(1)$ and $d\lambda_p$ is the Lebesgue measure on $\mathbb R^p$.
\end{itemize}
\end{sloppypar}
\end{theorem}

A very explicit characterization of \textit{exponential concentration} is given by a bound on the different centered moments that must be always kept in mind.
\begin{proposition}\label{pro:characterization_moments}
  A random vector $Z \in E$ is $q$-exponentially concentrated with observable diameter of order $\sigma$ (i.e., $Z \propto \mathcal E_q(\sigma)$) if and only if there exists $C\geq 1$ and $c = O(\sigma)$ such that for any (sequence of) $1$-Lipschitz functions $f : E \to \mathbb R$:
  \begin{align}\label{eq:characterization_concentration_avec_moments}
    \forall r \geq q : \mathbb{E}\left[\left\vert f(Z) - f(Z')\right\vert^r\right] \leq C \left(\frac{r}{q}\right)^{\frac{r}{q}}c^r,
   \end{align} 
   where $Z'$ is an independent copy of $Z$. 
   Inequality~\eqref{eq:characterization_concentration_avec_moments} also holds if we replace $f(Z')$ with $\mathbb E[f(Z)]$ (of course the constants $C$ and $c$ might be slightly different).
   If $Z$ is only linearly concentrated then this inequality is only true for linear mappings $f$ and one can replace $f(Z')$ by $f(\tilde Z)$ for any $\tilde Z$ being a deterministic equivalent of $Z$.
\end{proposition}

We then provide two computation tricks that will be used several times in next sections.

\begin{lemma}[\cite{louart2021spectral}, Lemma~2]\label{lem:concentration_condtionee}
  In the setting of Definition~\ref{def:concentrated_sequence}, if we are given a sequence of events $\mathcal A_n$ such that $\mathbb P(\mathcal A_n) \geq O(1)$, then:
  \begin{align*}
    Z \propto \mathcal E_q(\sigma)&
    &\Longrightarrow&
    &(Z \ | \ \mathcal A) \propto \mathcal E_q(\sigma) .
  \end{align*}
\end{lemma}

\begin{lemma}\label{lem:tool_concentration_under_diying_events}
  Let us consider a random variable $X$ such that:
  \begin{align*}
    \forall t>0:&
    &\mathbb P( \left\vert X - X' \right\vert \geq t \ | \ \mathcal A_K) \leq C e^{-c(t/\sigma K^m)^q},
  \end{align*}
  for some constants $C,c, m, q>0$ and 
  where $(\mathcal A_K)_{K>0}$ is a family of events such that for some $K_0 >0$:
  \begin{align*}
    \forall K \geq K_0:& 
    &\mathbb P(\mathcal A_K^c) \leq Ce^{-cK^q\eta}
   \end{align*}
   then we have the concentration:
   \begin{align*}
     \mathbb P( \left\vert X - X' \right\vert \geq t\vert) \leq 2 C e^{-c(t/\sigma K_0)^q} + C e^{-c(t \eta^{m/q}/\sigma )^{q/{m+1}}}
   \end{align*}
\end{lemma}
\begin{proof}
  Let us first consider $t \in [0, K_0^{1+m} \sigma \eta^{\frac{1}{q}}]$, in that case $\eta^{\frac{1}{q}}K_0> t/(\sigma  K_0^m)$, we can then bound:
  \begin{align*}
  \mathbb P \left( \left\vert X - X' \right\vert \geq t  \right)
    \leq\mathbb P( \left\vert X - X' \right\vert \geq t \ | \ \mathcal A_{K_0}\vert) + \mathbb P(\mathcal A_{K_0}^c) 
    \leq 2C e^{-c(t/\sigma K_0)^q}
  \end{align*}

  When $t> K_0^{1+m} \sigma \eta^{\frac{1}{q}}$:
  \begin{align*}
    \mathbb P \left( \left\vert X - X' \right\vert \geq t  \right)
    &\leq \mathbb P( \left\vert X - X' \right\vert \geq t \ | \ \mathcal A_K\vert) + \mathbb P(\mathcal A_K^c)\\
    &\leq C e^{-c(t/\sigma K^m)^q} + C e^{-cK^q \eta}
  \end{align*}
  Then, if we chose $K \equiv \left( \frac{t}{\sigma \eta^{\frac{1}{q}}} \right)^{\frac{1}{m+1}} \geq K_0$ by hypothesis on $t$, we obtain the inequality:
  \begin{align*}
    \mathbb P \left( \left\vert X - X' \right\vert \geq t  \right) \leq C e^{-(t \eta^{m/q}/\sigma)^{q/2}}.
  \end{align*}
\end{proof}
The two last lemmas combined allows us to set the concentration of a product of several variables as presented in Proposition~\ref{pro:concentration_produit_variable} in the appendix.
The readers unfamiliar with concentration of measure inferences are warmly advised to consul Appendix~\ref{app:concentration_of_the_measure} providing basic tools to control the norm and deal with concatenation of concentrated vectors; we also provide some important results of concentration of random matrices and spectral convergence.

\section{Concentration of the solutions to convex problems and contractive fixed point equations} 
\label{sec:concentration_of_the_solutions_to_conc_eq}

This sections aims at setting the concentration  of the solution to convex problems like:
\begin{align*}
  \text{Minimize} \ \Psi(Y), Y \in \mathbb R^p,
\end{align*}
where $\Psi: \mathbb R^p \to \mathbb R$ is a random convex mapping. Although our results could be extended to cases where $\Psi$ is only differentiable, we will assume in all this section that $\Psi$ is $\mathcal C^2$ and strictly convex. This lower bound on the Hessian of $\Psi$ is important since it will appear in the denominator of the observable diameter of $Y$.

In practical uses, the randomness of $\Psi$ generally depends on a random vector $X \in F$ and the problem write:
\begin{align}\label{eq:first_convex_problem}
  \text{Minimize} \ \psi(X)(Y), Y \in \mathbb R^p
\end{align}
for $\psi : F \to \mathcal F(\mathbb R^p, \mathbb R^p)$ deterministic and $X\in F$ satisfying some concentration inequality. As said above, in practical examples $\restriction{d\psi(X)^2}{y}$ is assumed to have a lower bound independent of $X$. One might desire to also have a bound limiting the variability due to $X$. It is generally not true, however we study in the first subsection this first example since the concentration of $Y$ is then a simple consequence to the theorem of implicit functions and the observable diameter obtained is the same as the one that will be retrieved in more complex settings. 

Note of course that $Y$ minimizing \eqref{eq:first_convex_problem} is also the solution to the equation:
\begin{align*}
  \phi(X)(Y) = 0
\end{align*}
where we noted for all $A,y \in F \times \mathbb R^p$ $\phi(A)(y) \equiv \restriction{d\psi(A)}{y}$. We will consider this associated problem in next subsection.

\subsection{Bounded variability on $X$}

In this subsection, we place ourselves in the case $F = \mathbb R^p$ and to employ next theorem to random convex problems, one has to chose:
\begin{align*}
   \phi(X,Y) \equiv \restriction{d\psi(X)}{Y} \in \mathbb R^p
 \end{align*}
 to retrieve the setting of the introduction of this section. 
 Note that the partial derivatives $\frac{\partial \phi}{\partial X}$ and $\frac{\partial \phi}{\partial Y}$ respectively belong to $\mathcal M_{p,n}$ and $\mathcal M_{p}$. 
\begin{theorem}\label{the:concentration_convex_problem_variability_onX_bounded}
  Let us consider a (sequence of) random vectors $X \in \mathbb R^n$ satisfying $X \propto \mathcal E_q$ and a deterministic $\mathcal C^1$ mapping $\phi : X \in \mathbb R^n \times \mathbb R^p \to \mathbb R^p$ such that $\forall A,y \in X(\Omega) \times \mathbb R^p$:
  \begin{itemize}
    \item $\| \restriction{\frac{\partial \phi}{\partial X}}{(A,y)}  \| \leq\sigma$,
    \item $\restriction{\frac{\partial \phi}{\partial Y}}{(A,y)}$ is a positive symmetric matrix satisfying $ \restriction{\frac{\partial \phi}{\partial Y}}{(A,y)} \geq \kappa I_p$,
  \end{itemize}
  for some (sequence of) positive parameters $\sigma,\kappa >0$. Then, the unique solution to the equation:
  \begin{align*}
    \phi(X, Y) =0,&
    &Y \in \mathbb R^p
  \end{align*}
  satisfies the concentration inequality $ Y \propto \mathcal E_q(\frac{\sigma}{\kappa})$.
\end{theorem}
\begin{proof}
  The implicit function Theorem sets that there exists a $\mathcal C^1$ mapping $\theta: \mathbb R^n \to \mathbb R^p$ such that $Y = \theta(X)$ and:
  \begin{align*}
    \restriction{d\theta}{X} = \left( \restriction{\frac{\partial \phi}{\partial Y}}{(X, \theta(Y))} \right)^{-1} \restriction{\frac{\partial \phi}{\partial X}}{(X, \theta(Y))}.
  \end{align*}
  One can then directly deduce the concentration of $Y$ from the $O(\frac{\sigma}{\kappa})$-Lipschitz  character of $\theta$.
\end{proof}

In more complex settings like the problems of robust regression that will be studied in this paper, the quantity $\| \restriction{\frac{\partial \phi}{\partial X}}{(A,y)}  \|$ does not admit a bound valid for all the drawings of $X$. One then has to take into account the dispersion of the different drawings of $Y$ to be able to bound efficiently $\| \restriction{\frac{\partial \phi}{\partial X}}{(A,y)}  \|$. The solution presented here relies on a reformulation of convex problems as contractive fixed point equations more adapted to control the variability towards $X$.

\subsection{From convex problem to a contractive fixed point equation}

Starting from the equation
\begin{align*}
  \restriction{d\Psi}{Y} = 0
\end{align*}
one can introduce the mapping:
\begin{align*}
  \Phi: Y \longmapsto Y - \frac{\restriction{d\Psi}{Y}}{\|\restriction{d^2\Psi}{\cdot}\|_\infty },
\end{align*}
we see that if for all $Y\in \mathbb R^p$, $\restriction{d^2\Psi}{Y} \geq \kappa I_p$ then $\Phi$ is contractive since:
\begin{align*}
  \left\Vert \restriction{d\Phi}{Y} \right\Vert = \left\Vert I_p - \frac{\restriction{d^2\Psi}{Y}}{\|\restriction{d^2\Psi}{\cdot}\|_\infty} \right\Vert \leq 1 - \frac{\kappa}{\|\restriction{d^2\Psi}{\cdot}\|_\infty }
\end{align*}

We will therefore from now on consider solutions $Y \in \mathbb R^p$ to fixed point equations:
\begin{align*}
    Y = \Phi(Y),
  \end{align*}  
  where $\Phi$ is contractive.


We start with settings where $\Phi$ is affine and all the $\Phi^k(y)$ are linearly concentrated for deterministic $y$ (Subsection~\ref{sse:phi_affine}), then we study equation with general $\Phi$ with linearly and Lipschitz concentration hypotheses on $\Phi^k(y)$ for $k\leq O(\sqrt{\log n})$ (Subsection~\ref{sse:contractive_linear_lipschitz_phik}), finally we precise this result for mappings $\Phi$ that are Lipschitz concentrated for the infinite norm on balls (Subsection~\ref{sse:contractive_lipschitz}).

\subsection{Concentration of solutions to quadratic problems ($\Phi$ affine)}\label{sse:phi_affine}
Given two normed (or semi-normed) vector spaces $(E,\|\cdot\|_E)$, $(F,\|\cdot\|_F)$, we denote $\mathcal A(E, F)$ the set of continuous affine mappings from $E$ to $F$ and we endow it with the norm:
\begin{align*}
  \forall \phi \in \mathcal A(E,F):
  \|\phi\|_{\mathcal A(E,F) } = \|\mathcal L(\phi) \|_{\mathcal L(E,F) } + \|\phi(0)\|_ F
\end{align*}
where $\mathcal L(\phi) = \phi - \phi(0)$ and $\|\|\mathcal L(\phi) \|_{\mathcal L(E,F) } = \sup_{x\in E, \|x\|_E \leq 1} \|\mathcal L(\phi)(x)\|_F$.

\begin{example}\label{exe:}
  
Let us consider as an introducing example the quadratic problem:
\begin{align*}
  \text{Minimize:} \ \ \  \Psi(y) = \left\Vert \frac{1}{\sqrt n}X y - z \right\Vert^2 + \|y\|^2, \y \in \mathbb R^p
\end{align*}
for a deterministic vector $z \in \mathbb R^p$ and where we merely assume that the columns of $X = (x_1,\ldots, x_n)$ are independent and uniformly distributed on the sphere $\mathbb S^{p-1}$ (then since $X \propto \mathcal E_2$, and $\|X\|\leq \sqrt n$, we know from proposition~\ref{pro:stabilite_lipschitz_mappings} that $\frac{1}{\sqrt n}X \propto \mathcal E_2(1/\sqrt n)$  and besides $\|X/\sqrt n\| \leq 1$). 
If we note:
\begin{align*}
  \Phi: y \mapsto y -  \frac{1}{2} \left( \frac{1}{ n}XX^T y + \frac{1}{\sqrt n} Xz +y \right)
\end{align*}   
we know that:
\begin{align*}
  \left\Vert \mathcal L(\Phi) \right\Vert
  = \left\Vert I_p - \frac{1}{2} \left( \frac{1}{ n}XX^T  + I_p \right) \right\Vert
  \leq \frac{1}{2}
 \end{align*}
and for all $k \in \mathbb N$:
\begin{align*}
  \mathcal L(\Phi)^k(\Phi(0)) = \frac{1}{2^k}\left( I_p - \frac{1}{n}XX^T\right)^k  \in \mathcal E_q \left( 2^{-k}\right) \ \ \text{in} \  \ (E, \| \cdot \|),
\end{align*}
The Theorem~\ref{the:Concentration_linearire_solution_implicite_hypo_concentration_phi^k_pour tout_k} then allows us to show that $Y = \am_{y\in \mathbb R^p} \Psi(y)$ satisfies the linear concentration:
\begin{align*}
 Y \in \mathcal E_2 \left( \frac{1}{\sqrt n} \right).
\end{align*}         
\end{example}
Let us start with an simple result of linear concentration taken from~\cite{louart2022sharp}:
\begin{proposition}\label{pro:concentration_serie_variables_concentres}
  \sloppypar{Given two constants $q,r>0$, $\sigma_1,\ldots,\sigma_n \ldots \in (\mathbb R_+^{\mathbb N}) ^\mathbb N$, a (sequence of) reflexive normed vector spaces $(E, \|\cdot\|)$, $\tilde Z_1, \ldots, \tilde Z_n,\ldots \in E^{\mathbb N}$ deterministic, and $ Z_1, \ldots,  Z_n,\ldots \in E^{\mathbb N}$ random (possibly dependent) satisfying, for any $n\in \mathbb N$, $Z_n \in \tilde Z_n \pm \mathcal E_q(\sigma_n)$. If we assume that $Z \equiv\sum_{n \in \mathbb N} Z_n$ is pointwise convergent\footnote{For any $w \in \Omega$, $\sum_{n \in \mathbb N} \|Z_n(w)\| \leq \infty$ and we define $Z(w) \equiv \sum_{n \in \mathbb N}Z_n(w)$}, that $\sum_{n \in \mathbb N} \tilde Z_n$ is well defined and that $\sum_{n\in \mathbb N} \sigma_i \leq \infty$, then we have the concentration~:
  \begin{align*}
    \sum_{n \in \mathbb N} Z_n \in \sum_{n \in \mathbb N} \tilde Z_n \pm \mathcal E_q \left(\sum_{n \in \mathbb N} \sigma_n\right),
     \ \ \text{in } (E, \Vert \cdot \Vert),
  \end{align*}}
\end{proposition}
We can then deduce:
\begin{theorem}[Concentration of the resolvent]\label{the:Concentration_linearire_solution_implicite_hypo_concentration_phi^k_pour tout_k}
  Given a (sequence of) reflexive vector space $(E, \| \cdot \|)$, let $\Phi\in \mathcal A(E)$ be a (sequence of) random mapping such that there exist two (sequences of) parameters $\sigma >0$ and $\varepsilon \in (0,1)$, such that $\left\Vert \mathcal L(\Phi)\right\Vert \leq 1-\varepsilon$, and such that for all (sequences of) integer $k$:
  \begin{align*}
    &\mathcal L(\Phi)^k(\Phi(0)) \in \mathcal E_q \left(\sigma(1-\varepsilon)^k\right) \ \ \text{in} \  \ (E, \| \cdot \|)&.
  \end{align*}
  Then the random equation
  \begin{align*}
    Y = \Phi(Y)
  \end{align*}
  admits a unique solution satisfying the linear concentration:
  \begin{align*}
    Y = ( Id_E - \mathcal L(\Phi))^{-1} \Phi(0)\in \mathcal E_q \left( \frac{\sigma}{\varepsilon} \right).
  \end{align*}
\end{theorem}
\begin{proof}
  The vector $Y$ is well defined and expresses:
  \begin{align*}
    Y = ( Id_E - \mathcal L(\phi))^{-1} \phi(0) = \sum_{k=0}^\infty \mathcal L(\phi)^k \phi(0)
  \end{align*}
  Now Proposition~\ref{pro:concentration_serie_variables_concentres} allows us to conclude from the concentrations $\mathcal L(\phi)^k \phi(0) \in \mathcal E_q(\sigma(1-\varepsilon)^k)$ that:
  \begin{align*}
    \sum_{k=0}^\infty \mathcal L(\phi)^k \phi(0) \in \mathcal E_q \left( \sum_{k=0}^\infty \sigma(1-\varepsilon)^k  \right) = \mathcal E_q \left( \frac{\sigma}{\varepsilon} \right).
  \end{align*}
\end{proof}

With Theorem~\ref{the:Concentration_linearire_solution_implicite_hypo_concentration_phi^k_pour tout_k} at hand we can already prove the Linear concentration in the following theorem. The Lipschitz concentration will be provided later with Proposition~\ref{pro:lipschitz_COncentration_solution_conentrated_equation_hypothese_Psi_k_concentre}.
minimizing problem of the form
\begin{align*}
  \text{Minimize} \ \ y^TWy + \lambda Z y,& & y \in \mathbb R^p,
\end{align*}
\subsection{When $\phi^k(y)$ is linearly or Lipschitz concentrated for $k \leq \log(\eta_{(E, \| \cdot \|)})$ and $y$ deterministic and bounded.}\label{sse:contractive_linear_lipschitz_phik}

When we can not get a decreasing observable diameter for the iterates of $\phi$, or when $\phi$ is not affine, one needs a different approach that allows to treat, at the same time affine and non-affine mappings $\phi$ and extend very simply linear concentration inferences to Lipschitz concentration inferences. 
Given a normed vector space $(E, \Vert \cdot \|)$, we note $\mathcal F(E)$, the set of mappings from $E$ to $E$.
If $f$ is bounded, we denote $\| f \|_{\infty} = \sup_{x,y \in E} \|f(x)\|$.
 For a Lipschitz mapping $f \in F(E)$, we introduce the seminorm $\| \cdot \|_ {\mathcal L}$ which provides the Lipschitz parameter and will play the role of $\|\mathcal L(\Phi)\|$ in Proposition~\ref{pro:COncentration_solution_conentrated_equation_phi_affine_k_leq_log_eta}:
$$\| f \|_{\mathcal L} = \sup_{\genfrac{}{}{0pt}{2}{x,y \in E}{x \neq y}} \frac{\| f(x) - f(y)\|}{\|x - y \|}.$$
\begin{proposition}\label{pro:COncentration_solution_conentrated_equation_phi_affine_k_leq_log_eta}
  Given a (sequence of) reflexive\footnote{We suppose $E$ reflexive to be able to define an expectation operator on the set of random vectors of $E$ -- see \ref{app:definition_expectation}} vector space $(E, \| \cdot \|)$, we note $\eta$ its norm degree (of course $\eta \geq O(1)$), we then consider $\Phi\in \mathcal F(E, E)$, a (sequence of) random Lipschitz mapping such that there exists a (sequence of) parameters $\varepsilon \in (0,1)$ such that $\left\Vert \Phi\right\Vert_{\mathcal L} \leq 1-\varepsilon$. 
  Given an integer $k_0\in \mathbb N$ such that $k_0 \geq \lceil -\frac{\log(\eta)}{q\log(1 - 2 \varepsilon)}\rceil$, noting $\tilde Y$ the unique\footnote{The assumption on $\left\Vert \Phi\right\Vert_{\mathcal L}$ ensures the existence and uniqueness of $\tilde Y$.} (deterministic) solution to $\tilde Y = \mathbb E[\Phi^{k_0}(\tilde Y)]$, if we further assume that\footnote{The term $\sum_{i=0}^{k_0} (1- \varepsilon)^i = \frac{1}{\varepsilon} - \frac{1}{\varepsilon}(1- \varepsilon)^{k_0}$ will appear naturally in application as pictured in Lemma~\ref{lem:concentration_composition_intelligente} below. In the application we will see below this concentration is true for any $k\in \mathbb N$.}:
  \begin{align}\label{eq:concentration_phi^k_avec_norme_forte}
    &\Phi^{k_0}(\tilde Y) \in \tilde Y \pm \mathcal E_q \left(\sum_{i=0}^{k_0} (1- \varepsilon)^i \sigma\right)&
    &\text{in} \  \ (E, \| \cdot \|),
  \end{align} 
  for a given (sequence) of integer $\sigma >0$,
  then, the random equation
  \begin{align*}
    y = \phi(y), \ \ y \in E
  \end{align*}
  admits a unique solution $Y \in E$ satisfying the linear concentration:
  \begin{align*}
    Y \in \tilde Y \pm\mathcal E_q \left( \frac{\sigma}{\varepsilon}\right) .
  \end{align*}
\end{proposition}

\begin{proof}
  The mapping $y \mapsto \Phi(y)$ is contractive, that proves the existence and uniqueness of $Y$ ($E$ is complete since it is reflexive -- see Remark~\ref{rem:reflexif_complet}).

  Then let us try and bound $\|Y - \tilde Y\|$:
  \begin{align*}
    \left\Vert Y - \tilde Y\right\Vert
    & = \left\Vert  \Phi^{k_0}(Y) - \mathbb E \left[ \Phi ^{k_0}(\tilde Y) \right]\right\Vert \leq \left\Vert   \Phi^{k_0}(Y) - \Phi^{k_0}(\tilde Y)\right\Vert + \left\Vert  \Phi^{k_0}(\tilde Y) - \mathbb E \left[ \Phi ^{k_0}(\tilde Y) \right]\right\Vert\\
    & \leq \|\mathcal L(\Phi)\|^{k_0} \left \Vert Y - \tilde Y\right\Vert + \left\Vert  \Phi^{k_0}(\tilde Y) - \mathbb E \left[ \Phi ^{k_0}(\tilde Y) \right]\right\Vert.
  \end{align*}
  The inequality $\|\Phi^{k_0}\| \leq (1-\varepsilon)^{k_0}$, gives us the bound:
  \begin{align*}
    \left\Vert Y - \tilde Y\right\Vert
    & \leq \frac{1}{1-\left( 1 - \varepsilon \right)^{k_0}} \left\Vert  \Phi^{k_0}(\tilde Y) - \mathbb E \left[ \Phi ^{k_0}(\tilde Y) \right]\right\Vert.
  \end{align*}
  But we know from Proposition~\ref{pro:tao_conc_exp} that:
  \begin{align*}
    \left\Vert \Phi^{k_0}(\tilde Y) - \tilde \Phi_k(\tilde Y)\right\Vert \in O \left( \eta^{1/q} \sigma\sum_{i=0}^{k_0}\left( 1 - \varepsilon \right)^i \right) \pm \mathcal E_q(\eta^{1/q} \sigma)
  \end{align*}
  which allows us to conclude (since $\sum_{i=0}^k\left( 1 - \varepsilon \right)^i = \frac{1}{\varepsilon} - \frac{1}{\varepsilon}(1- \varepsilon)^k$) that: 
  $$\left\Vert Y - \tilde Y\right\Vert \in O \left( \frac{\eta^{1/q} \sigma}{\varepsilon} \right) \pm \mathcal E_q\left( \frac{\eta^{1/q} \sigma}{\varepsilon} \right).$$

  Returning to our initial goal (the linear concentration of $Y$), we now bound, for $f\in \mathcal L(E)$ (we still have $\|\mathcal L(\Phi)\|\leq 1- \varepsilon$):
  \begin{align*}
    \left\vert f(Y) - f(\tilde Y)\right\vert
    & \leq \left\vert f \left(  \Phi^{k_0}(Y) \right) - f \left(\Phi^{k_0}(\tilde Y)\right)\right\vert + \left\vert f \left(\Phi^{k_0}(\tilde Y) \right) - f \left(\mathbb E \left[ \Phi ^{k_0}(\tilde Y) \right]\right)\right\vert\\
    & \leq (1-\varepsilon)^{k_0}\left\Vert Y - \tilde Y\right\Vert + \left\vert f \left( \Phi^{k_0}(\tilde Y)  \right)-  \mathbb E \left[ f\left(\Phi ^{k_0}(\tilde Y) \right)\right]\right\vert.
  \end{align*}
  Further, noting that, with our choice of $k$, $(1 - \varepsilon)^{k_0}  = O(1/ \eta^{1/q})$, we conclude again from the concentration of $\Phi^{k_0}(\tilde Y)$ that:
  \begin{align*}
    f(Y) \in f(\tilde Y) \pm \mathcal E_q \left(\frac{\sigma}{\varepsilon}\right),
  \end{align*}
  thereby giving the sought-for concentration result.
\end{proof}

Let us extend the result of Proposition~\ref{pro:COncentration_solution_conentrated_equation_phi_affine_k_leq_log_eta}  to the case of Lipschitz concentration.

\begin{proposition}\label{pro:lipschitz_COncentration_solution_conentrated_equation_hypothese_Psi_k_concentre}
  In the setting of Proposition~\ref{pro:COncentration_solution_conentrated_equation_phi_affine_k_leq_log_eta}, if we additionally assume that we have the Lipschitz concentration:
  \begin{align*}
    \Phi^{k_0}(\tilde Y) \propto \mathcal E_q \left(\sum_{i=0}^{k_0} (1- \varepsilon)^i \sigma\right)&
    &\text{in} \  \ (E, \| \cdot \|).
  \end{align*}
  then we have the Lipschitz concentration:
  \begin{align*}
    Y \propto \mathcal E_q \left(\frac{\sigma}{\varepsilon}\right),
  \end{align*}  
\end{proposition}

\begin{proof}


  We already know from Proposition~\ref{pro:COncentration_solution_conentrated_equation_phi_affine_k_leq_log_eta} that $Y \in \mathcal E_q(\sigma / \varepsilon)$.
  To show the Lipschitz concentration of $Y$, let us consider a Lipschitz map $f : E \rightarrow \mathbb R$ and introduce the mappings:
  \begin{align*}
    &U :
    \begin{aligned}[t]
      E& &\longmapsto& &E \times \mathbb R \\
      y& &\longmapsto& &\left(y,f(y)\right)
    \end{aligned}
    &
    \hspace{0.9cm}
    &V :
    \begin{aligned}[t]
      E \times \mathbb R& &\longmapsto& &E  \\
      (y, t)\  & &\longmapsto& & y
    \end{aligned}
  \end{align*}
  (note that  $V \circ U = Id_E$).
  If we endow $E\times \mathbb R$ with the norm $\|\cdot \|_{\ell^\infty}$ satisfying $\forall (y,t) \in E\times \mathbb R$, $\|(y,t) \|_{\ell^\infty} = \max(\|y\|, |t |)$ then the mappings $U$ and $V$ are both $1$-Lipschitz, consequently $\|U\circ \Phi\circ V\|_{\mathcal L} \leq 1- \varepsilon$ and we can consider $\tilde Z \in E \times \mathbb R$ solution to:
  \begin{align*}
    \tilde Z = \mathbb E \left[ U\circ \Phi^{k_0}\circ V(\tilde Z) \right],
  \end{align*}
  by uniqueness of the solution to $y = \mathbb E[\Phi^{k_0}(y)]$, we know that $\tilde Z = (\tilde Y, \tilde t)$ with $\tilde t \equiv \mathbb E[ f(\Phi^{k_0}(\tilde Y))]$.
  Then:
  \begin{align*}
    (U\circ \Phi\circ V)^{k_0}(\tilde Z) = (\Phi^{k_0}( \tilde Y), f(\Phi^{k_0}( \tilde Y)))  \propto \mathcal E_q \left(\sum_{i=0}^{k_0} (1- \varepsilon)^i \sigma\right)& 
    &\text{in} \ (E \times \mathbb R, \| \cdot \|_{\ell^\infty})
   \end{align*} 
   by hypothesis. Therefore, one can deduce from Proposition~\ref{pro:COncentration_solution_conentrated_equation_phi_affine_k_leq_log_eta} the linear concentration of $Z$ solution to the fixed point equation:
  \begin{align}\label{eq:point_fixe_X}
    z = U \circ \Phi \circ V(z)&
    &z\in E \times \mathbb R;
  \end{align}
  from which one can deduce in particular, the concentration of the second component $f(Y)$.
\end{proof}

\subsection{When $\Psi \propto \mathcal E_2(\sigma)$ for the infinity norm}\label{sse:contractive_lipschitz}
If one can assume the stronger hypothesis that $\Psi$ is concentrated as a random mapping and for the infinity norm, then we can infer the concentration of all the iterates of $\Psi$. We give provide here the result for a general setting where the mapping $\phi$ is only bounded around a given point $y_0$, for that, for any $r>0$, we introduce the semi-norm $\|\cdot\|_{\mathcal B(y_0, r)}$ defined for any $f \in \mathcal F(E)$ and $y \in E$ as:
\begin{align*}
  \|f\|_{\mathcal B(y_0, r)} = \sup_{\|y- y_0\| \leq r} \|f(y)\|
\end{align*}
\begin{theorem}\label{the:lipschitz_COncentration_solution_conentrated_equation_Phi_concentre_norme_infinie}
Given a normed vector space $(E,\|\cdot \|)$ and a (sequence of) random mapping $\Phi\in \mathcal F^\infty(E)$ we assume that we are given a (sequence of) deterministic vector $y_0 \in E$ and three (sequences of) parameters $\sigma, \tau,\varepsilon>0$ such that: 
\begin{itemize}
  \item $\Psi$ is $(1-\varepsilon)$-Lipschitz
  \item $\|y_0 - \Psi(y_0)\| \leq \tau$
  \item $\Psi \propto \mathcal E_q \left(\sigma\right)$ in $\left( \mathcal F(E), \| \cdot \|_{\mathcal B(y_0, \tau/\varepsilon)} \right)$,
\end{itemize} 
then the random equation $Y = \Psi(Y)$ admits a unique solution $Y\in E$ satisfying the Lipschitz concentration:
  \begin{align*}
     Y \propto \mathcal E_q \left(\frac{\sigma}{\varepsilon}\right) .
  \end{align*}
\end{theorem}
One can see in the proof of the theorem below that the required concentration is actually the concentration of $\restriction{\Psi}{\mathcal B(y_0, \frac{\tau}{\varepsilon})}$ to be able to employ Proposition~\ref{pro:lipschitz_COncentration_solution_conentrated_equation_hypothese_Psi_k_concentre}.


The following two lemmas allows us to apply Proposition~\ref{pro:lipschitz_COncentration_solution_conentrated_equation_hypothese_Psi_k_concentre} to prove Theorem~\ref{the:lipschitz_COncentration_solution_conentrated_equation_Phi_concentre_norme_infinie} below.
\begin{lemma}\label{lem:stability_ball_contractive}
  Any $(1-\varepsilon)$-Lipschitz mapping $\psi: E\to E$ is stable on any ball $\mathcal B(y_0, \frac{1}{\varepsilon}\|y_0 - \psi(y_0)\|)$, where $y_0 \in E$.
\end{lemma}
\begin{proof}
  Let us note $\tau \equiv \|y_0 - \psi(y_0)\|$ and for any $y\in \mathcal B(y_0, \frac{\tau}{\varepsilon})$, let us bound:
  \begin{align*}
    \left\Vert \Psi(y) - y_0\right\Vert \ 
    &\leq \left\Vert \Psi(y) - \Psi(y_0)\right\Vert + \left\Vert \Psi(y_0) - y_0\right\Vert \\
    &\leq (1 - \varepsilon) \|y - y_0\| +  \|\Psi(y_0) - y_0 \| 
    \leq \ (1-\varepsilon)\frac{\tau}{\varepsilon} + \tau
    \ \ =  \ \frac{\tau}{\varepsilon}
  \end{align*}
\end{proof}
\begin{lemma}\label{lem:concentration_composition_intelligente} 
  Given a random mapping $\Psi : E \to E$, four (sequences of) parameters\footnote{Unlike in Theorem~\ref{the:lipschitz_COncentration_solution_conentrated_equation_Phi_concentre_norme_infinie}, $\epsilon$ can here tend to zero.} $\sigma, \varepsilon, \tau,\alpha>0$ and a (sequence of) deterministic vector $y_0 \in E$, if we assume that:
  \begin{itemize}
    \item $\|y_0 - \Psi(y_0)\| \leq \tau$,
     \item $\Psi$ is $(1 - \varepsilon)$-Lipschitz,
     \item $\Psi \propto \mathcal E_q(\sigma)$ in $( \mathcal F(E, E), \| \cdot \|_{\mathcal B(y_0,\frac{\tau}{\varepsilon})})$,
   \end{itemize}
  then for any integer $k>0$, we have the concentration:
  \begin{align*}
    \Psi^k \propto \mathcal E_q\left(\sum_{i=0}^{k} (1- \varepsilon)^i \sigma\right)&
    &\text{in} \ \ ( \mathcal F(E), \| \cdot \|_{\mathcal B(y_0,\frac{\tau}{\varepsilon})}),
  \end{align*}
\end{lemma}
\begin{proof}
  Thus, for any $f \in \Psi(\Omega)$ and for all $k \in \mathbb N$, $\Psi^k: \mathcal B(y_0,\frac{\tau}{\varepsilon}) \to \mathcal B(y_0,\frac{\tau}{\varepsilon})$, and we can bound for any supplementary mapping $g \in \Psi(\Omega)$:
  \begin{align*}
    \|f^k - g^k\|_{\mathcal B(y_0,\frac{\tau}{\varepsilon})}
    &\leq \sum_{i=1}^k (1-\varepsilon)^{i-1} \sup_{y \in \mathcal B(y_0,\frac{\tau}{\varepsilon})}\|f(g^{k-i}(y)) - g(g^{k-i}(y)) \| \\ 
    &\leq \sum_{i=0}^{k} (1- \varepsilon)^i \|f - g \|_{\mathcal B(y_0,\tau/\varepsilon)} .
   \end{align*}
   Thus the mapping $f \mapsto f^k$ is $\sum_{i=0}^{k} (1- \varepsilon)^i$-Lipschitz from $(\mathcal F(E), \| \cdot \|_{\mathcal B(y_0,\tau/\varepsilon)})$ to $(\mathcal F(E), \| \cdot \|_{\mathcal B(y_0,\tau)})$ and one can deduce the result of the Lemma from the concentration of $\Psi$.

\end{proof}

\begin{proof}[Proof of Theorem~\ref{the:lipschitz_COncentration_solution_conentrated_equation_Phi_concentre_norme_infinie}]
Noting $k_0 =\lceil -\frac{\log(\eta)}{q\log(1 - 2 \varepsilon)}\rceil$, let introduce $\tilde Y$ the solution of the fixed point equation $\tilde Y = \mathbb E[\Psi^{k_0}](\tilde Y)$. 
We know that:
\begin{align*}
  \left\Vert \tilde Y - y_0\right\Vert \ 
    &\leq \left\Vert \mathbb E \left[ \Psi^{k_0}(\tilde Y) - \Psi^{k_0}(y_0) \right]\right\Vert + \left\Vert \mathbb E \left[ \Psi^{k_0}(y_0) \right] - y_0\right\Vert\\
    &\leq (1 - \varepsilon)^{k_0} \|\tilde Y - y_0\| + \sum_{i= 0}^{k_0-1} (1- \varepsilon)^i \|\Psi(y_0) - y_0 \|
    \\ 
    &\leq \ \frac{\sum_{i=0}^{k_0} (1- \varepsilon)^i\tau}{1 - (1-\varepsilon)^{k_0}}
    \ = \ \frac{\tau}{\varepsilon},
\end{align*}

Therefore $\tilde Y \in \mathcal B(y_0, \frac{\tau}{\varepsilon})$ and we can besides deduce from the hypotheses of the theorem and Lemma~\ref{lem:concentration_composition_intelligente} that:
\begin{align*}
  \Psi^{k_0}(\tilde Y) \propto \mathcal E_q\left(\sum_{i=0}^{k} (1- \varepsilon)^i \sigma\right).
\end{align*}
  We then see that the hypotheses of Proposition~\ref{pro:lipschitz_COncentration_solution_conentrated_equation_hypothese_Psi_k_concentre} are satisfied and we can deduce the result of the theorem.
\end{proof}

\subsection{Concentration of minimizing solution $Y$ to convex problems $\Psi(X)(Y)$ with $\Psi$ deterministic and $X$ concentrated}
This study could be extended to a wide range and settings, but we now concentrate on a classical setting of Theorems~\ref{pro:COncentration_solution_conentrated_equation_phi_affine_k_leq_log_eta}-\ref{the:lipschitz_COncentration_solution_conentrated_equation_Phi_concentre_norme_infinie} where the randomness of $\Psi$ depends on a random vector $X \in F$ (for a normed vector space $(F, \| \cdot \|)$) and $Y$ is the minimizing solution of:
\begin{align*}
  \text{Minimize:} \ \  \phi(X)(Y)
\end{align*}
for a given deterministic mapping $\phi : F \to \mathcal F(\mathbb R^p, \mathbb R)$. 

The issue is then, not only to show the concentration of $Y$ but also the concentration of $(X,Y) \in F \times \mathbb R^p$ to be able to control the operations made on $X$ and $Y$ as needed in next section. 

\begin{corollary}\label{cor:concentration_lipschitz_solution_point_fixe}
  Given a reflexive vector space $(F, \|\cdot\|_F)$, a random vector $X \in F$ satisfying $X \propto \mathcal E_q$
  and a deterministic mapping $\phi : F \rightarrow \mathcal F(\mathbb R^p, \mathbb R^p)$, we assume that
   for all $X$, that there exists a (sequence) of parameters $\kappa >0$ such that $d^2(\phi(X)) \geq \kappa I_p$ (with the order relation on symmetric matrices).
  If we assume that there exists a (sequence of) vectors $y_0 \in {\mathbb R^p}$, such that:
  \begin{align}\label{eq:borne_X_dpsiX}
     \delta \equiv \max \left( \sigma \|X\|, \|\restriction{d\phi(X)}{y_0}\| \right)
   \end{align} and the mapping $A \mapsto d(\phi(A))$ is $O(\sigma)$-Lipschitz from $(F,\|\cdot\|_F)$ to\footnote{where for any $f \in \mathcal F(F,\mathbb R)$, $\|f\|_{F,B_{\mathbb R^p}(y_0, r)} = \sup \{\|f(y)\|_F, \|y - y_0\|_{\mathbb R^p} \leq r\}$} $(\mathcal F({\mathbb R^p}, {\mathbb R^p}), \|\cdot\|_{F,B_{(y_0,  \delta /\kappa)}})$,
  then, $Y = \text{argmin}_{y\in \mathbb R^p}\psi(X)(y)$ satisfies:
  \begin{align*}
     \ \ \left( \frac{\sigma}{\kappa}  X, Y \right) \propto \mathcal E_2 \left( \frac{\sigma}{\kappa} \right).
  \end{align*}

\end{corollary}
\begin{proof}
We endow the vector space $E \equiv F \times \mathbb R^p$ with the norm $ \|\cdot \|_{\ell^\infty}$ defined for any $(A,y) \in E$ as $\|(A,y)\|^{\ell^\infty} = \max(\|A\|_F,\|y\|)$. For any $K>0$, we place ourselves on the event $\mathcal A_K \equiv \{\|d^2\psi(X)\|_{\mathcal B(y_0, \frac{\delta}{\kappa})} \leq K\}$. Then for any $B \in F$, we introduce the mapping:
\begin{align*}
    \phi(B) : 
    \begin{aligned}[t]
      (F\times \mathbb R^p, \|\cdot \|^{\ell^\infty})& &\longrightarrow && (F\times \mathbb R^p, \|\cdot \|^{\ell^\infty}) \hspace{1.1cm}\\
      (A, y) \hspace{0.8cm}\hspace{0.1cm} & &\longmapsto& & \left(A - \frac{1}{K} (\sigma B - \kappa A) , y - \frac{1}{K} \restriction{d(\psi(B))}{y}\right).
    \end{aligned}
  \end{align*}

Noting $z_0 \equiv (0, y_0) \in F\times \mathbb R^p$, we can bound:
\begin{align*}
  \left\Vert z_0 - \phi(X)(z_0) \right\Vert
  = \max \left( \frac{\sigma}{K}\left\Vert X \right\Vert, \frac{1}{K}\left\Vert \restriction{d(\psi(X))}{y_0} \right\Vert \right)
  = \frac{\delta}{K},
\end{align*}
and of course, under $\mathcal A_K$, $\phi(X)$ is $(1-\frac{\kappa}{K})$-Lipschitz on $\mathcal B(z_0, \frac{\delta}{\kappa})$.

In addition, note that $B \mapsto \phi(B)$ is $O(\sigma/K)$-Lipschitz from $(X(\mathcal A_K), \|\cdot \|_F)$ to $(\mathcal F(E,E), \| \cdot \|^{\ell^\infty}_{\mathcal B(z_0,\delta /\kappa)})$, therefore, it satisfies:
\begin{align*}
  \phi(X) \ | \ \mathcal A_K\propto \mathcal E_q(\sigma/K) \ \ \text{in  } \ \ ( \mathcal F(\mathbb R^p), \| \cdot \|_{\mathcal B(z_0, \delta/ \kappa)}).
\end{align*}

The random vector $(\frac{\sigma}{\kappa} X, Y)$ being the unique solution to the fixed point equation $z = \phi(X)(z)$,
one can therefore employ Theorem~\ref{the:lipschitz_COncentration_solution_conentrated_equation_Phi_concentre_norme_infinie} to $\phi$ to set:
\begin{align*}
  \ \ \left( \frac{\sigma}{\kappa}  X, Y \right) \ | \ \mathcal A_K \propto \mathcal E_2 \left( \frac{\sigma}{\kappa} \right).
\end{align*}
 The concentration constants being independent of $K$, one can let $K$ tend to infinity to get the expected result.

\end{proof}

Let us now try to replace from the hypotheses of Corollary~\ref{cor:concentration_lipschitz_solution_point_fixe2} assumption \eqref{eq:borne_X_dpsiX} by a weaker assumption.
\begin{corollary}\label{cor:concentration_lipschitz_solution_point_fixe2}
under the hypotheses of Corollary~\ref{cor:concentration_lipschitz_solution_point_fixe}, if one further assumes that $\sigma \equiv \frac{1}{\sqrt p}$, $\eta_{F, \|\cdot\|} \leq O(\sqrt p)$ and replaces \eqref{eq:borne_X_dpsiX} with:

\begin{align*}
  \delta_0 \equiv  \max \left( \frac{1}{\sqrt p}\mathbb E[\|X\|], \mathbb E \left[ \left\Vert  \restriction{d\phi(X)}{y_0} \right\Vert \right] \right) \leq O(1),
\end{align*}
and that for all $\delta>\delta_0$, the mapping $A \mapsto d(\phi(A))$ is $O(\delta/\sqrt p)$-Lipschitz from $(F,\|\cdot\|_F)$ to $(\mathcal F({\mathbb R^p}, {\mathbb R^p}), \|\cdot\|_{F,B_{(y_0,  \delta /\kappa)}})$,
then one obtain the concentration:
\begin{align*}
  \left( \frac{1}{\sqrt p \kappa}  X, Y \right) \propto \mathcal E_2 \left( \frac{1}{\sqrt p\kappa} \right) + \mathcal E_1 \left( \frac{1}{p\kappa} \right).
\end{align*}
\end{corollary}
\begin{proof}
  We are going to use the bound resulting from Proposition~\ref{pro:tao_conc_exp}:
  \begin{align*}
    \forall \delta > 2 \delta_0 :&
    &\mathbb P \left( \frac{1}{\sqrt p}  \|X\| \geq \delta \right) \leq C e^{-c p \delta^2}&
    &\text{and}&
    &\mathbb P \left( \left\Vert  \restriction{d\phi(X)}{y_0} \right\Vert \geq \delta \right) \leq C e^{-c p\delta^2},
  \end{align*}
  for some constants $C,c>0$.
  Furthermore, placing ourselves on the event:
  \begin{align*}
    \mathcal A_\delta \equiv \left\{ \max \left( \frac{1}{\sqrt p}\|X\|,  \left\Vert  \restriction{d\phi(X)}{y_0} \right\Vert \right) \leq \delta \right\},
  \end{align*}
  for $\delta>0$ big enough, we know from Lemma~\ref{lem:concentration_condtionee} and Corollary~\ref{cor:concentration_lipschitz_solution_point_fixe} that $\left( \frac{1}{\kappa\sqrt p}  X, Y \right) \ | \mathcal A_\delta \propto \mathcal E_2(\delta/\sqrt p \kappa)$. Therefore there exist two constants $C',c'>0$ such that for any $1$-Lipschitz mapping $f: F\times \mathbb R^p \to \mathbb R$:
  \begin{align*}
    \mathbb P \left( \left\vert f \left( \left( \frac{1}{\kappa\sqrt p}  X, Y \right) \right) - \mathbb E \left[ f \left( \left( \frac{1}{\kappa\sqrt p}  X, Y \right) \right)  \right]\right\vert \geq t \ | \ \mathcal A_ \delta \right) \leq C'e^{- c'p\kappa^2t^2/\delta^2}
  \end{align*}
  Then we can deduce the concentration distinguishing the cases $t \leq \frac{\delta_0^2}{\kappa}$ and $t \geq \frac{\delta_0^2}{\kappa}$ and choosing in this last case $\delta = \sqrt{t/\kappa} \geq \delta_0$ (see Lemma~\ref{lem:tool_concentration_under_diying_events} for more precision).
\end{proof}



\section{Fixed point equation depending on independent data $x_1,\ldots, x_n$}\label{sse:estimation_point_fixe_complexe}
We study in this section the very common case of a matrix of data $X = (x_1,\ldots, x_n) \in \mathcal M_{p,n}$, where all the columns of $X$ are independent but not identically distributed and $\Psi$ acting on each column $x_i$ ``independently'' through the decomposition for all $A = (a_1,\ldots,a_n) \in \mathcal M_{p,n}$, all $y\in E$:
 \begin{align}\label{eq:decomposition_psi}
    \Psi(A)(y) = \frac{1}{n} \sum_{i=1}^n h(a_i^Ty) + \|y\|^2,
 \end{align}
 where $h_i : \mathbb R \to \mathbb R$ are twice differentiable and convex. This way $Y = \am \Psi(X)(y)$ is also solution to:
 \begin{align}\label{eq:fixed_point_equation_Y}
   Y = \frac{1}{n}\sum_{i=1}^n f_i(x_i^T Y) x_i,
 \end{align}
 where $f_i = -\frac{1}{2}h_i'$, and also to the contractive equation (since $I_p \leq d^2(\Psi(X)) \leq (1 + \frac{1}{n}\sup_{i\in[n]}\|h_i'{}'\|_{\infty})I_p) $:
 \begin{align*}
   Y = Y - \frac{Y - \frac{1}{n}\sum_{i=1}^n f(x_i^T Y) x_i}{1 + \frac{1}{n}\sup_{i\in[n]}\|h_i'{}'\|_{\infty} \|XX^T\|} 
 \end{align*}
 
 We will make the following assumptions:
 \begin{assumption}\label{ass:concentration_X}
  $X \propto \mathcal E_2$\footnote{In the initial Definition~\ref{def:concentrated_sequence}, we defined the concentration of a sequence of random vectors and here, $X_{n,p}$ is indexed by two natural numbers. A slight change of Definition~\ref{def:concentrated_sequence} allows us to adapt it to any set of indexes $S$ for $(X_s)_{s \in S}$ (in particular to $S = \mathbb N^2$), the two constants $C,c>0$ appearing in the concentration inequality are assumed to be valid for any $s \in S$ (i.e. for any $n,p \in \mathbb N^2$).}. 
\end{assumption}
\begin{assumption}\label{ass:x_i_independant}
  $X$ has independent columns $x_1,\ldots, x_n\in \mathbb R^p$\footnote{note that we do not assume that the $x_i$ are identically distributed as it is not required.}
\end{assumption}
\begin{assumption}\label{ass:n_p_commensurable}
  $p\leq O(n)$
\end{assumption}
Let us note for simplicity, for any $i \in [n]$:
\begin{align*}
  \mu_i \equiv \mathbb E[x_i]&
  &\Sigma_i \equiv \mathbb E[x_ix_i^T]&
  &\text{and}&
  &C_i \equiv \Sigma_i - \mu_i\mu_i^T
\end{align*}
We know from Proposition~\ref{pro:carcaterisation_vecteur_linearirement_concentre} that $\sup_{1\leq i \leq n}  \|C_i\| \leq O(1)$, but we also need to bound:
\begin{assumption}\label{ass:borne_norme_x_i}
  $\sup_{1\leq i\leq n}\|\mu_i\| \leq O(1)$ and $\inf_{1\leq i\leq n}\|\frac{1}{n}\tr \Sigma_i\| \geq O(1)$\footnote{This second hypothesis on the statistics of $X$ is not introduced to set the concentration of $Q$ but for the design of a deterministic equivalent. If the covariance of a vector $x_i$ is too small, one should be able to replace it by its expectation in the construction of a deterministic equivalent of $Q$, however, in this quasi asymptotic regime, it is not easy to identify the correct threshold, thus we prefer to place ourselves in the most common case where the energy of every data is taken into account in our estimation.}
\end{assumption}
 We will make the following assumptions on $f$, for reasons that will be clear later.
\begin{assumption}\label{ass:h_3_diff}
  The mappings $h_i: \mathbb R \to \mathbb R$ $1\leq i \leq n$ are three times differentiable.
\end{assumption}

\begin{assumption}\label{ass:h2_borne}
  $\sup_{i\in [n]}\|h_i'\|_\infty, \sup_{i\in [n]}\|h_i^{(2)}\|_\infty, \sup_{i\in [n]}\|h_i^{(3)}\|_\infty \leq O(1)$.
\end{assumption}

\subsection{Leave-one-out approach}
To estimate the expectation and covariance of $Y$, one needs to disentangle the influence of each data $x_i$ on $Y$. This leads us to studying the random vector $Y_{-i}$, defined in \eqref{eq:def_Y_m_i} and  independent with $x_i$ by construction.
To link $Y$ with $Y_{-i}$ we creates a ``bridge'' defined by a parameter $t \in [0,1]$, through a mapping $\Psi_{-i}^t :\mathcal M_{p,n} \to \lip(E)$, defined for any $A \in \mathcal M_{p,n}$ and any $y \in E$ with:
\begin{align*}
   \Psi_{-i}^t(A)(y)\equiv\frac{1}{n}\sum_{\genfrac{}{}{0pt}{2}{j=1}{j \neq i}}  f_i(a_j^Ty)a_j + t f_i(a_i^Ty)a_i.
 \end{align*} 
 Then, noting for any $t \in [0,1]$ $y_{-i}^A(t)$, the unique solution (when it exists) to $y_{-i}^A(t) = \Psi_{-i}^t(A)(y_{-i}^A(t))$, we see that:
 \begin{align*}
   Y_{-i} = y_{-i}^X(0)&
   &\text{and}&
   &Y = y_{-i}^X(1)
 \end{align*}
The next lemma sets the differentiability of the mapping $y_{-i}^A:[0,1] \to E$.

\begin{lemma}\label{lem:Y_differentiable}

  Under the differentiablility assumption on $h$ (Assumption~\ref{ass:h_3_diff}) the mapping $y^A_{-i}$ is differentiable and we have:
  \begin{align*}
    y^A_{-i}{}'(t) = \frac{\chi_A'(t)}{n} \left(I_p + \frac{1}{n} A_{-i} D_A(t)) A_{-i}^T\right)^{-1} a_i,
  \end{align*}
  where $\chi_A:t \mapsto t f_i(a_i^T y_{-i}^A(t))$ and $D_A : t \mapsto \diag(- f_i'(x_i^T y_{-i}^A(t))$.
\end{lemma}
\begin{proof}
  Starting from the fixed point equation satisfied by $Y$ in \eqref{eq:fixed_point_equation_Y}, we apply the inverse function theorem to the $\mathcal C^1$ bijective mapping:
  \begin{align*}
    \Theta : 
    \begin{aligned}[t]
      \mathbb R \times \mathbb R^p  && \longrightarrow && \mathbb R \times \mathbb R^p  \hspace{1.3cm} \\
      (t,y)\hspace{0.1cm}&&\longmapsto&& \left(t,y - \frac{1}{n}\sum_{\genfrac{}{}{0pt}{2}{j=0}{j \neq i}}f_i(a_j^Ty)a_j - \frac{t}{n} f_i(a_i^Ty)a_i\right).
    \end{aligned}
  \end{align*}
  It is indeed possible since $\mathbb R \times \mathbb R^p $ is a Banach space, $d\Theta$ is clearly bounded ($\Psi(A)$ is Lipschitz), and $\forall (t,y) \times (s,h) \in (\mathbb R \times \mathbb R^p )^2$:
  \begin{align*}
    d\Theta_{|(t,y)}\cdot (s,h) 
    &= \left(s,h -  \frac{s}{n} f_i(a_i^T y) a_i -   \frac{t}{n} f_i'(a_i^T y) a_i^T h a_i - A_{-i}\diag(f_i'(a_i^T y))_{i \in [n]}A_{-i} h  \right)\\
    &= \left(s,h -  \frac{s}{n} f_i(a_i^T y) a_i  - A_{-i}(t)\diag(f_i'(a_i^T y))_{i \in [n]}A(t) h  \right),
  \end{align*}
  if we note $A_{-i}(t) = (a_1,\ldots, a_{i-1},  t a_i,a_{i+1}, \cdots, a_n) \in \mathcal{M}_{p,n}$.
  Besides, $\forall t \in \mathbb R $, $f_i'(t) \leq 0$, thus $Q_{-i}^A(t) \equiv (I_p + A_{-i}(t) D_A(t) A)^{-1}$  is well defined and $d\Theta_{|(t,y)}$ is invertible with:
  \begin{align*}
    d\Theta_{|(t,y)}^{-1} = \left( \begin{array}{cc}
      1& 0\\
      Q_{-i}^A(t)f_i(a_i^Ty)a_i&Q_{-i}^A(t)
    \end{array}  \right).
  \end{align*}
  Therefore, $\Theta^{-1}$ is also $C^1$ and, we can differentiate $y_{-i}^A (t)= \Theta^{-1}(t,0)$ to obtain the identity:
  \begin{align}\label{eq:point_fixe_y'}
    y_{-i}^A{}'(t) =
    & - A_{-i} D_A(t) A_{-i}^T y_{-i}^A{}'(t)  \nonumber\\
    &\hspace{0.5cm}+  \frac{1}{n}f_i(a_i^Ty^A_{-i}(t))a_i 
    + \frac{t}{n} f'_i \left( a_i^Ty^A_{-i}{}(t) \right) a_i^Ty^A_{-i}{}'(t) a_i,
  \end{align}
  from which we retrieve directly the result of the Lemma.
\end{proof}
\subsection{Concentration of $t \mapsto y_{-i}^X{}'(t)$} 

Introducing, for any $t \in [0,1]$, and any $i \in[n]$, the diagonal matrix $D_{-i}(t) \in \mathcal D_{n}$ defined with
  \begin{align*}
    D_{-i}(t)_i = - tf'_i \left(x_i^Ty^X_{-i}(t) \right)&
    &\text{and}&
    &\forall j \in [n], j\neq i: D_{-i}(t)_j =  -f'_i \left(x_j^Ty^X_{-i}(t)\right)
  \end{align*}
  the random matrix $\restriction{d_2\Psi_{-i}^t(A)}{y}$ writes more simply $\frac{1}{n}XD_{-i}(t)X$ and we have the identity:
  \begin{align}\label{eq:formule_y'(t)}
    y_{-i}^X{}'(t) = \frac{1}{n}f_i(x_i^Ty_{-i}^X(t))Q_{-i}(t)x_i,
  \end{align}
  where, $Q_{-i}(t) \equiv Q(D_{-i}(t)) \equiv (I_p + \frac{1}{n}X D_{-i}(t)X^T)^{-1}$. Note in particular that:
  \begin{align}\label{eq:borne_Q_m_i}
    \forall t \in [0,1]: \ \ \|Q_{-i}(t)\| \leq 1
  \end{align}
  When $t=0$, the random matrix $Q_{-i}(0)$, that we note $Q_{-i}$ is then independent with $x_i$ like $D_{-i}\equiv D_{-i}(0)$, since then $[D_{-i}]_i = 0$.
 We are going to express the concentration of mappings defined on $[0,1]$ and having value on vectorial spaces ($\mathbb R^p$, $\mathcal{M}_{p,n}$ or $\mathcal D_n$). Those concentration are expressed with the infinite norm toward the parameter $t \in [0,1]$, for simplicity, we will superscript the vectorial norms with $\infty$ to designate those norms. For instance:
 \begin{align*}
   \left\Vert y_{-i}^X(\cdot) \right\Vert^\infty = \sup_{t \in [0,1]} \left\Vert y_{-i}^X(t) \right\Vert.
 \end{align*}
 We will also need to express norms on product of vectorial spaces. Given two normed vectorial spaces $(E, \|\cdot \|_E)$ and $(F, \| \cdot \|_F)$, we note $\|\cdot \|_E + \|\cdot \|_F$ the norm defined on $E \times F$ as:
 \begin{align*}
   \forall (x,y) \in E\times F: \ \ \ (\|\cdot \|_E + \|\cdot \|_F)(x,y) = \|x \|_E + \| y \|_F.
 \end{align*}

 We will now introduce a family of event indexed by the norm of $\|X\|$. Given $u>K$, we note:
  \begin{align*}
    \mathcal A_u \equiv \left\{ \|X\| \leq u \sqrt n \right\}
  \end{align*}
It is possible to bound $\|X\|$ thanks to the bound
\begin{align*}
  \|\mathbb E[X] \| \leq \sqrt n \sup_{1\leq i\leq n}\|\mathbb E[x_i]\| \leq O(\sqrt n)&
\end{align*}
Proposition~\ref{pro:tao_conc_exp} allows us to set $\mathbb E[\|X\|] \leq \|\mathbb E[X] \| + O(\sqrt n) \leq O(\sqrt n)$ and therefore $\|X\| \in O(\sqrt n) \pm \mathcal E_2$. 
  In other words
  \begin{align*}
    \forall u \geq \nu, \mathbb P \left( A_u^c \right) \leq C e^{-cnu^2},&
    &\text{where} \ \ \nu \equiv \frac{2 \mathbb E[\|X\|]}{\sqrt n}\leq O(1),
   \end{align*}
  and $C,c>0$ are two constants.

 \begin{proposition}\label{pro:concentration_X_y}
  For all $u \geq \nu$, and $i \in [n]$:
  \begin{align*}
  \left(\frac{1}{u^2\sqrt n} X, y_{-i}^X{}(\cdot)\right) \ | \ \mathcal A_u \propto \mathcal E_2 \left(\frac{u^2}{\sqrt n} \right) &
  &\text{in } \  \left(\mathcal M_{p,n} \times (\mathbb R^p)^{[0,1]}, \|\cdot\|_F^\infty+\|\cdot\|^\infty\right),  
  \end{align*}
 \end{proposition}
 \begin{proof}
 Given a (sequence of) parameter $u \geq K$, we first know from Lemma~\ref{lem:concentration_condtionee} that $(X \ |\ \mathcal A_u) \propto \mathcal E_2$
  We are going to employ Corollary~\ref{cor:concentration_lipschitz_solution_point_fixe} with the mapping $\Psi^{\cdot}_{-i}: \mathcal{M}_{p,n} \to \mathcal F((\mathbb R^p)^{[0,1]})$ that is defined for any $A,y,t \in \mathcal{M}_{p,n} \times \mathbb R^p \times [0,1]$ as:
  \begin{align}\label{eq:expression_psi_m_i_A}
    \Psi^{t}_{-i}(A)(y) = \Psi(A)(y) - \frac{1-t}{n}h(a_i^Ty),
  \end{align}
  where we recall that $\Psi(A)(y) = \frac{1}{n}\sum_{i=1}^n h(a_i^Ty)$. With this choice (recalling that $f = -\frac{1}{2} h'$), for all $A \in \mathcal{M}_{p,n}$, $y \in \mathbb R^p$:
  \begin{align*}
    \restriction{d(\Psi^t_{-i}(A))}{y} = \frac{1}{n}A_{-i}f_i(A_{-i}^Ty) + \frac{t}{n} f_i(a_i^Ty)a_i.
  \end{align*}


To apply Corollary~\ref{cor:concentration_lipschitz_solution_point_fixe}, we merely choose $y_0 : t \mapsto 0$, then for any $A \in X(\mathcal A_u)$:
  \begin{align*}
    \left\Vert \restriction{d(\Psi^\cdot_{-i}(A))}{y_0(\cdot)} \right\Vert^\infty
    = \frac{1}{n}\left\vert f_i(0) \right\vert \left\Vert A_{-i}\un + \frac{t}{n} a_i \right\Vert 
    \leq \|f\|_\infty u
  \end{align*}
  We can easily bound for any $A \in X(\mathcal A_u)$, $y \in (\mathbb R^p)^{[0,1]}$:
  \begin{align*}
    I_p \leq \restriction{d^2(\Psi^{\cdot}_{-i}(A))}{y} = - \frac{2}{n}A_{-i} \diag(f_i'(a_j^Ty))_{j\in [n]} A_{-i} - \frac{2t}{n}f_i'(a_i^Ty) a_ia_i^T + I_p \leq \|f'\|_\infty u^2
  \end{align*}
  (recall that $\forall t\in \mathbb R$, $f_i'(t) \leq 0$).

  Besides for any $y \in \mathcal B(y_0, u \|f\|_\infty)$ and any $A,B \in X(\mathcal A_u) \subset \mathcal{M}_{p,n}$:
  \begin{align*}
    \|\restriction{d(\Psi(A))}{y} - \restriction{d(\Psi(B))}{y}\|^\infty
    &\leq  \frac{1}{n} \left\Vert  (A - B)f_i(A^Ty)\right\Vert^\infty + \frac{1}{n}\left\Vert B \left( f_i(A^Ty) - f_i(B^Ty)\right)\right\Vert^\infty\\
    &\leq \frac{1}{n}(|f_i(0)| + \|A\| \|y\|^\infty \|f'\|_\infty )\| A-B\| + \frac{1}{n}\|B \| \|A-B\| \|y\|^\infty \|f'\|_\infty \\
    &\leq  O \left(\frac{u^2}{\sqrt n}\right) \|A-B\|;
  \end{align*}
  and the same way, for any $a,b \in x_i(\mathcal A_u) \subset \mathbb R^p$ we show that:
  \begin{align*}
    \frac{1}{n}\|f_i(a^Ty)a - f_i(b^Ty)b\|^\infty
    &\leq  O \left(\frac{u^2}{\sqrt n}\right) \|a-b\|,
  \end{align*}

  Therefore, noting $E \equiv (\mathcal R^p)^{[0,1]}$, we deduce from \eqref{eq:expression_psi_m_i_A} that for all $u> K$, $A \mapsto \restriction{d(\Psi^{\cdot}_{-i}(A))}{\cdot}$ is $O \left(\frac{u^2}{\sqrt n}\right)$-Lipschitz from $(X(\mathcal A_u), \|\cdot \|_F)$ to $(\mathcal F(E,E), \|\cdot\|_{\mathcal B(y_0, u)}^\infty)$.
  We can then deduce from Corollary~\ref{cor:concentration_lipschitz_solution_point_fixe} the result of the proposition.


\end{proof}

\begin{corollary}\label{cor:concentration_queue_Y}
  $Y(\cdot) \propto \mathcal E_2(n^{-1/2}) + \mathcal E_{\frac{2}{3}}(n^{-3/2})$.
\end{corollary}
\begin{proof}

  Noting that $Y$ is a $1$-Lipschitz transformation of $(\frac{1}{u^2\sqrt n} X, y_{-i}^X{}(\cdot))$, let us bound thanks to Proposition~\ref{pro:concentration_X_y} for any $u\geq \nu$ and any independent copy of $Y$, $Y'$:
  \begin{align*}
    \mathbb P \left( \left\vert f_i(Y) - f_i(Y')\right\vert \geq t \ | \ \mathcal A_u\right) 
    & \leq C e^{-cnt^2/u^4}&
    &\text{and}&
    &\mathbb P \left( \mathcal A_u^c \right)\leq C e^{-cnu^2}.
  \end{align*}
  One can then apply Lemma~\ref{lem:tool_concentration_under_diying_events} with $\sigma = 1/\sqrt n$, $m=2$ and\footnote{See Example~\ref{exe:norm_degree} to get more precision} $\eta = \eta_{(\mathcal{M}_{p,n}, \| \cdot \|)} = O(1/n)$ to obtain the result of the corollary.
\end{proof}

 Let us now prove the concentration of $y_{-i}^X{}'$ that we will integrate in next subsection.
 \begin{proposition}\label{pro:concentration_X_y_y'}
  Under $\mathcal A_\nu$, $\forall i \in [n]$, $y_{-i}^X{}'(\cdot) \in \mathcal E_2 \left(\frac{1}{\sqrt n} \right) $ 
  in $((\mathbb R^p)^{[0,1]}, \|\cdot\|^\infty)$.
 \end{proposition}
 One could propose as in Corollary~\ref{cor:concentration_queue_Y} a sharper control on the tail of $y_{-i}^X(\cdot)$ but the expression becomes complicated, so we prefer not to present it.
 \begin{proof}
  One could once again employ Corollary~\ref{cor:concentration_lipschitz_solution_point_fixe}, but it is straightforward here to employ the explicit formula given by \eqref{eq:formule_y'(t)}:
  \begin{align*}
    y_{-i}^X{}'(t) = \frac{1}{n}f_i(x_i^Ty_{-i}^X(t)) Q_{-i}(t)x_i,
  \end{align*}
  that allows us to state that $y_{-i}^X{}'(\cdot)$ is concentrated as a $O(1/\sqrt n)$-Lipschitz transformation under $\mathcal A_\nu$ of the concentrated vector $\left( X, \sqrt n  Y_{-i}(\cdot),D_{-i}(\cdot) \right)$ (recall indeed that $\|Q_{-i}(t)\|, \|f_i\|_\infty, \|X\|/\sqrt n, \|D_{-i}\|^\infty \leq O(1)$).
 \end{proof}

 \subsection{Integration of $\frac{\partial y_{-i}^X(t)}{\partial t}$}\label{sse:integration_y_mi_prime}
Now that the concentration of the objects $y_{-i}^X(t)$ and $y_{-i}^X(t){}'$ are well understood, we are able to integrate the formula provided by Lemma~\ref{lem:Y_differentiable} to express the link between $Y$ and $Y_{-i}$. We just give some preliminary results to control the matrix $Q_{-i}(\cdot)$. In a first time, let us study $Q_{-i} \equiv Q_{-i}(0) = Q_{-i}(D_{-i})$ which is independent with $x_i$.

  \begin{proposition}\label{pro:Q_m_i_proche_de_Q_m_i_cdot}
    Under $\mathcal A_\nu$, $ \forall i\in [n]$:
    \begin{align*}
      \left\Vert Q_{-i}(\cdot)x_i - Q_{-i}x_i\right\Vert \in O \left(\sqrt{\log(n)}\right) \pm \mathcal E_2 &
      &\text{in} \ \ (\mathbb R^{[0,1]}, \|\cdot\|^\infty)
    \end{align*}
  \end{proposition}
  This proposition is a consequence of the following lemma.
  \begin{lemma}\label{lem:borne_D_m_D_m_i}
    Under $\mathcal A_\nu$, $\sup_{t\in[0,1]}\|D_{-i}-D_{-i}(t)\|_F \leq O(1)$
  \end{lemma}
  \begin{proof}
    
  From identities $D_{-i}(t) \equiv \diag(f_i'(X^T y_{-i}^X(t)))$ and:
  \begin{align*}
    X^T y_{-i}^X(t) = \frac{1}{n}X^TX_{-i}f_i(X^T y_{-i}^X(t)) + \frac{t}{n}X^Tx_if_i(x_i^Ty_{-i}^X(t)),
  \end{align*}
  we can bound, under $\mathcal A_\nu$ (recall that $\|X\|,\|x_i\| \leq O(\sqrt n)$ and $\|f\|_\infty \leq O(1)$):
  \begin{align}\label{eq:borne_D_D_m_i}
    \|D_{-i}-D_{-i}(t)\|_F 
    &\leq \|f'{}'\|_\infty \|X^T y_{-i}^X(0) -X^T y_{-i}^X(t)\| \nonumber\\
    &\leq \frac{\|f'{}'\|_\infty}{\varepsilon}\frac{t}{n} \left\Vert X^Tx_if_i(x_i^Ty_{-i}^X(t))\right\Vert
    \ \leq  \ O \left( 1 \right)
  \end{align}
  \end{proof}
  \begin{proof}[Proof of Proposition~\ref{pro:Q_m_i_proche_de_Q_m_i_cdot}]
  Under $\mathcal A_\nu$ and for any $t\in[0,1]$, let us bound:
  \begin{align*}
    \left\Vert (Q_{-i}(t)- Q_{-i})x_i\right\Vert 
    &\leq \frac{1}{n}  \left\Vert Q_{-i}(t)X_{-i} (D_{-i}-D(t)) X_{-i}^TQ_{-i}x_i\right\Vert \\
    &\leq O \left(\frac{1}{\sqrt n}\right)  \|X_{-i}^TQ_{-i}x_i\|_\infty,
  \end{align*}
  thanks to Lemma~\ref{lem:borne_D_m_D_m_i} and 
  since $\|X\| \leq O(\sqrt n)$ and $\|Q_{-i}(t)\|\leq 1$.

  Now, it is possible to show (see \cite[Lemma D.3]{LOU21HV}) that  $X_{-i}^TQ_{-i}x_i\in \mathcal E_2(\sqrt n)$. Besides, one can bound thanks to the independence between $x_i$ and $Q_{-i}$ ($\mathcal A_\nu$ being overwhelming, it preserves in a sense the independence relations):
  \begin{align*}
    \left\Vert \mathbb E \left[ X_{-i}^TQ_{-i}x_i \ | \ \mathcal A_\nu\right] \right\Vert_\infty
    &\leq \left\Vert \mathbb E \left[ X_{-i}^TQ_{-i} \mathbb E[x_i] \right] \right\Vert_\infty + O \left( \frac{1}{n} \right)\\
    &\leq  \mathbb E \left[ \left\Vert X_{-i} \right\Vert\right] \| \mathbb E[x_i]\| \leq O(\sqrt n),
  \end{align*}
  derived from Assumption~\ref{ass:borne_norme_x_i} and \eqref{eq:borne_Q_m_i}. Thus, Proposition~\ref{pro:tao_conc_exp} allows us to bound:
  \begin{align*}
    \mathbb E \left[\left\Vert  X_{-i}^TQ_{-i}x_i \right\Vert_\infty \ | \ \mathcal A_\nu \right] \leq O(\sqrt{\log(n)}),
  \end{align*}
  from which we deduce the result of the proposition.
\end{proof}

We have now all the elements to prove:
\begin{proposition}\label{pro:lien_Y_Y_mi}

  7 6.8
  Under $\mathcal A_\nu$, $\forall i\in[n]$:
  \begin{align*}
    \left\Vert Y - Y_{-i}-\frac{1}{n}f_i(x_i^TY)Q_{-i}x_i\right\Vert \in O \left( \frac{\sqrt{\log n}  }{n} \right) \pm \mathcal E_2 \left(\frac{1}{n}\right) 
  \end{align*}
\end{proposition}
\begin{proof}
  Setting $\chi(t) \equiv tf_i(x_i^T y^X_{-i}(t)) \in \mathbb R$, let us integrate between $0$ and $t$ the identity $y_{-i}^X{}'(t) = \chi'(t) \frac{1}{n} Q_{-i}(t) x_i$:
  \begin{align}\label{eq:integree}
    y^X_{-i}(t)-Y_{-i}  = \frac{1}{n} f_i(x_i^TY)Q_{-i}x_i + \frac{1}{n}\int_0^t \chi'(u)  (Q_{-i}(u)- Q_{-i}(0) )x_i du .
  \end{align}
  Now, $\chi'(u) = f_i(x_i^Ty^X_{-i}(u)) + t f_i'(x_i^Ty^X_{-i}(u))x_i^Ty^X_{-i}{}'(u)$, and we know from Proposition~\ref{pro:concentration_X_y} and Proposition~\ref{pro:concentration_X_y_y'} that under $\mathcal A_\nu$:
  \begin{itemize}
    \item $f_i(x_i^T y_{-i}^X(\cdot) ) \in O(1) \pm  \mathcal E_2$ and $\|f_i(x_i^T y_{-i}^X(\cdot) )\|_\infty \leq O(1)$
    \item $f_i'(x_i^T y_{-i}^X(\cdot) ) \in O(1) \pm  \mathcal E_2$ and $\|f_i'(x_i^T y_{-i}^X(\cdot) )\|_\infty \leq O(1)$
    \item $x_i^T y_{-i}^X{}'(\cdot) \in O(1) \pm  \mathcal E_2$ and $\|x_i^T y_{-i}^X{}'(\cdot) \|_\infty \leq O(1)$
  \end{itemize}
  therefore $\chi'(\cdot) \in O(1) \pm \mathcal E_2$ in $\left(\mathbb R^{[0,1]}, \|\cdot\|_\infty \right)$, $\|\chi'\|_{\infty} \leq O(1)$ and one can bound:

  \begin{align*}
    &\left\Vert \chi'(\cdot)  (Q_{-i}(u)- Q_{-i}(0) )x_i \right\Vert \\
    &\hspace{2cm}\leq \left\vert \chi'( \cdot)\right\vert \left\Vert (Q_{-i}(\cdot)- Q_{-i}(0) )x_i \right\Vert\in O(\sqrt{\log n}  ) \pm \mathcal E_2,
  \end{align*}
  in $(\mathbb R^{[0,1]}, \|\cdot \|^\infty)$. Since the integration between $0$ and $t$ is $1$-Lipschitz for the infinite norm on $[0,t]$, we have the concentration:
  \begin{align*}
    \left\Vert  \frac{1}{n}\int_0^t \chi'(u)  (Q_{-i}(u)- Q_{-i}(0) )x_i du\right\Vert \in O(\sqrt{\log n}  ) \pm \mathcal E_2.
  \end{align*}
\end{proof}
Note first that $Y$ and $Y_{-i}$ have comparable first statistics.
\begin{corollary}\label{cor:statistique_Y_Y_m_i}
  \begin{align}\label{eq:statistique_Y_pareil_statistique_Y_m_i}
\left\{\begin{aligned}
  &\left\Vert \mathbb E_{\mathcal A_\nu} \left[Y\right] -\mathbb E_{\mathcal A_\nu} \left[Y_{-i}\right]\right\Vert \leq O \left(\sqrt{\frac{\log n}{n}}\right)\\
  &\left\Vert \mathbb E_{\mathcal A_\nu} \left[YY^T\right] -\mathbb E_{\mathcal A_\nu} \left[Y_{-i}Y_{-i}^T\right]\right\Vert_* \leq O \left(\sqrt{\frac{\log n}{n}}\right),
\end{aligned}\right.
\end{align}
where we recall that $\|\cdot \|_*$ is the nuclear norm satisfying for any $A \in \mathcal{M}_{p,n}$, $\|A\|_* = \tr ( \sqrt{AA^T})$.
\end{corollary}
\begin{proof}
It is just a consequence of Proposition~\ref{pro:lien_Y_Y_mi} and the bounds:
  \begin{align*}
  &\left\Vert \mathbb E_{\mathcal A_\nu} \left[\frac{1}{n}f_i(x_i^TY)Q_{-i}x_i  \right]\right\Vert\leq \frac{1}{n}\mathbb E_{\mathcal A_\nu} \left[ |f_i(x_i^TY)| \|x_i\|\|Q_{-i}\|\right]\leq O \left(\frac{1}{\sqrt n}\right)\\
  & \left\Vert \mathbb E_{\mathcal A_\nu} \left[\frac{1}{n^2}f_i(x_i^TY)^2Q_{-i}x_ix_i^T Q_{-i}  \right]\right\Vert_*
  \leq \frac{1}{n^2}\mathbb E_{\mathcal A_\nu} \left[ |f_i(x_i^TY)^2| \|x_i\|^2\|Q_{-i}\|^2\right] \leq O \left(\frac{1}{n}\right)\\
  & \left\Vert \mathbb E_{\mathcal A_\nu} \left[\frac{1}{n}f_i(x_i^TY)^2Q_{-i}x_iY_{-i}^T  \right]\right\Vert_*
  \leq O \left(\frac{1}{\sqrt n}\right),
\end{align*}
\end{proof}

One can then wonder why we set a result as complex as Proposition~\ref{pro:lien_Y_Y_mi} if it was simply to obtain these simple relations between the first statistics of $Y$ and $Y_{-i}$. It is because the behaviors of $Y$ and $Y_{-i}$ diverge when one looks at projections on $x_i$.

The observable diameter of order $O(\sqrt{\log n}/n)$ in Proposition~\ref{pro:lien_Y_Y_mi} allows us to keep good concentration bounds when $Y$ is multiplied on the left by $x_j^T$, $j \in [n]$ (indeed, under $\mathcal A_\nu$, $\|x_j\|\leq O(\sqrt n)$). This time, the term $\frac{1}{n}f_i(x_i^TY)x_j^TQ_{-i}x_i$ can be of order $O(1)$ in particular when $j=i$. 
For all $ j \in [n]$, $\frac{1}{n}f_i(x_i^TY)x_jQ_{-i}x_i \in \mathcal E_2(1/\sqrt n)$ (as a $1/\sqrt n$ transformation of $(X, \sqrt n y_{-i}^X(\cdot))$) thus if $j\neq i$:
\begin{align*}
  \mathbb E_{\mathcal A_\nu} \left[\frac{1}{n}f_i(x_i^TY)x_jQ_{-i}x_i \right] \leq O \left(\frac{1}{\sqrt n}\right)
\end{align*}
but when $j=i$ this quantity can be of order $O(1)$, therefore we obtain the concentrations:
\begin{corollary}\label{cor:concentration_xi_Y_xi_Y_m_i}
  \begin{align}\label{eq:x_j_Y_fonction_x_j_Y_m_i}
  \left\{
  \begin{aligned}
    &x_j^T Y - x_j^T Y_{-i} \in O\left(\sqrt{\frac{\log n}{ n}}\right) \pm \mathcal E_1 \left(\frac{1}{\sqrt n}\right) \ \ \text{when} \  \ j \neq i\\
    &x_i^T Y - x_i^T Y_{-i} - \frac{1}{n} x_i^T Q_{-i}x_i f_i(x_i^TY) \in O\left(\sqrt{\frac{\log n}{ n}}\right) \pm \mathcal E_1 \left(\frac{1}{\sqrt n}\right) .
  \end{aligned}\right.
\end{align}
\end{corollary}

\subsection{Implicit relation between $x_i^T Y$ and $x_i^T Y_{-i}$}\label{sse:implicit_relation_x_iY_x_i Y_mi}

The linear concentration \eqref{eq:x_j_Y_fonction_x_j_Y_m_i} interests us particularly because it allows us to replace in the identity
\begin{align*}
  Y = \frac{1}{n} \sum_{i=1}^n f_i(x_i^TY) x_i,
\end{align*}
the quantity $f_i(x_i^TY) x_i$ with a quantity $f_i(\zeta_i(x_i^TY_{-i})) x_i$ (for a given mapping $\zeta_i : \mathbb R \to \mathbb R$) that is more easy to manage thanks to the independence between $x_i$ and $Y_{-i}$. 
The random variable is estimated thanks to a fixed point equation already introduced, in the case of Wishart matrices, in \cite{SIL95, PAJ09,louart2021spectral}. Introducing the notation
\begin{align*}
    \tilde Q^{\Lambda(\tilde D)}(\tilde D) = \left( I_p + \frac{1}{n} \sum_{i=1}^n \mathbb E \left[ \frac{\tilde D_i \Sigma_i}{1+\Lambda_i(\tilde D) \tilde D_i} \right] \right)^{-1},
  \end{align*} 
  where, for all $\Gamma \in \mathcal{D}_{n}$, we note $\Lambda(\Gamma)$ the unique solution to:
  \begin{align*}
   \forall i \in [n]: \ \Lambda_i(\Gamma) = \frac{1}{n}\tr \left( \Sigma_i \tilde Q^{\Lambda(\Gamma)}(\Gamma) \right),
  \end{align*}
  (see \cite[Theorem 1]{louart2021spectral} for existence and uniqueness).

That allows us to express the following theorem (to keep a simple expression, we chose to take here $D \in \mathcal D_n^+$ and $Q = (I_p + \frac{1}{n}XDX^T)^{-1}$, recall however that in the studied example $D$ has negative entries).
\begin{theorem}[\cite{LOU21HV}, Theorem 8.1, Theorem 8.2]\label{the:concentration_resolvente_avec_diagonale_aleatoire}
Given a random diagonal matrix $D \in \mathcal D_n$ with positive entries and a random matrix $X = (x_1,\ldots, x_n) \in \mathcal M_{p,n}$ satisfying Assumptions~\ref{ass:concentration_X},~\ref{ass:x_i_independant} and~\ref{ass:borne_norme_x_i}, in the regime $p\leq O(n)$ and under the assumptions:
   \begin{itemize}
     \item for all $i \in[n]$, there exists a random diagonal matrix $D^{(i)}$, independent of $x_i$, such that $ \sup_{i\in[n]}\|D_{-i} - D_{-i}^{(i)}\|_F \leq O(1)$,
     \item there exist three constants $\kappa,\kappa_D, \varepsilon>0$ ($\varepsilon \geq O(1)$ and $\kappa,\kappa_D\leq O(1)$), such that $\|X\|,\|Y\| \leq \sqrt n\kappa$, $\|D\| \leq \kappa_D$,
   \end{itemize}
   we have the linear concentration:
  \begin{align*}
    Q  \in Q^{\Lambda(D)}(D) \pm \mathcal E_1 \left(\log(n)\right) + \mathcal E_{\frac{1}{2}}(1/\sqrt n)  &
    &\text{in} \ (\mathcal M_{p}, \|\cdot \|_F),
  \end{align*}
 \end{theorem} 
 For simplicity, we will note from now on:
\begin{align*}
  \Delta \equiv\Lambda(D),
\end{align*}
(where the expectation in the definition of $\tilde Q^{\Lambda(D)}(D)$ is taken on $\mathcal A_\nu$).
One can then deduce in particular from this theorem that for any $i \in [n]$:
 \begin{align}\label{eq:concentration_autour_de_delta}
   \frac{1}{n}x_i^T Q_{-i}x_i \ | \ \mathcal A_\nu \in \Delta_i \pm \mathcal E_1 \left(\log(n)\right) + \mathcal E_{\frac{1}{2}}(1/\sqrt n)
 \end{align} 
It sounds then natural to introduce the equation
\begin{align}\label{eq:point_fixe_XY}
  z = x_i^T Y_{-i} + \Delta_i f_i(z), &
  &z \in \mathbb R
\end{align}
whose solution is close to $x_i^T Y$ as stated by next proposition. 
\begin{proposition}\label{pro:point_fixe_x_iY}
  Given $i \in [n]$, and $x \in \mathbb R$, the equation:
  \begin{align}\label{eq:point_fixe_z'}
    z = x + \Delta_i f_i(z), &
    & z \in \mathbb R,
  \end{align}
  admits a unique solution that we note $\zeta_i(x)$, one then has the approximation:
  \begin{align*}
     x_i^T Y \in \zeta_i(x_i^T Y_{-i})\pm \mathcal E_1 \left(\frac{1}{\sqrt n}\right) 
   \end{align*} 
\end{proposition}
To prove this proposition, we are going to employ the following lemma which is just an adaptation of the result of Lemma~\ref{lem:stability_ball_contractive} concerning contractive mappings to the case of convex mappings.
 \begin{lemma}\label{lem:borne_point_fixe}
  Given a twice differentiable convex mapping $f: \mathbb R^p \to \mathbb R$ such that for all $y \in \mathbb R^p$, $ \restriction{d^2f}{y} \geq \kappa$ and a vector $y_0 \in \mathbb R^p$ such that $\|\restriction{df}{y_0}\| \leq \tau$, we know that the fixed point $y* = \min_{y \in \mathbb R^p}f_i(y)$ also satisfies: 
  \begin{align*}
    \|y* - y_0\| \leq \frac{\tau}{\kappa}
  \end{align*}
  \end{lemma}
  \begin{proof}
    One just has to introduce $K \equiv \sup_{y\in \mathcal B(y_0, \frac{\tau}{\kappa})} \| \restriction{d^2f}{y} \|$, then the mapping  $\phi: y \mapsto y - \frac{1}{K}\restriction{df}{y}$ is $(1 - \frac{K}{\kappa})$-Lipschitz on $\mathcal B(y_0, \frac{\tau}{\kappa})$ and one can conclude with Lemma~\ref{lem:stability_ball_contractive}.
  \end{proof}
\begin{proof}[Proof of Proposition~\ref{pro:point_fixe_x_iY}]
As the solution of the minimizing convex problem ($\Delta_i>0)$:
\begin{align*}
  \text{Minimize:} \ \phi(z) = \|z\|^2 + \Delta_i h(z) - x_{i}^T Y_{-i}z, 
\end{align*}
$\zeta(x_{i}^T Y_{-i}z)$ is well defined and unique.
  Now, we can bound under $\mathcal A_\nu$:
  \begin{align*}
    \left\Vert  \restriction{d\phi}{x_i^T Y} \right\Vert 
    =\left\Vert  x_i^T Y + \Delta_i f_i(x_i^T Y) - x_{i}^T Y_{-i} \right\Vert \in O \left( \sqrt {\frac{\log n}{n}} \right) \pm \mathcal E_2 \left( \frac{1}{\sqrt n} \right)
  \end{align*}
  thanks to Corollary~\ref{cor:concentration_xi_Y_xi_Y_m_i}. We can therefore employ Lemma~\ref{lem:borne_point_fixe} to deduce the result of the proposition.
\end{proof}
We end this subsection with a little result that will allow us to differentiate $\zeta_i$ and:
\begin{align*}
  \xi_i : t \mapsto f_i(\zeta_i(t))
\end{align*}
\begin{lemma}\label{lem:Derive_zeta_borne}
  Given $i\in [n]$, the mapping $\zeta_i$ is differentiable and we have the identity:
  \begin{align*}
    \xi_i'(t) = \frac{f_i'(\zeta_i(t))}{ 1  - \Delta_i f_i'(\zeta_i(t))}.
  \end{align*}
\end{lemma}
\begin{proof}
  Considering $z,t \in \mathbb R$, let us express:
  \begin{align*}
    \zeta_i(z + t) - \zeta_i(z) = t + \Delta_i \left(f_i(\zeta_i(z + t)) - f_i(\zeta_i(z))\right)
  \end{align*}
  thus $|\zeta_i(z + t) - \zeta_i(z)| \leq \frac{t}{1 - \Delta_i \|f'\|_\infty} $ (note that it implies that $\zeta_i$ is continuous). 
  Let us bound:
  \begin{align*}
     &\left\vert f_i(\zeta_i(z + t)) - f_i(\zeta_i(z)) - f_i'(\zeta_i(z))(\zeta_i(z + t) -\zeta_i(z))\right\vert \\
     &\hspace{2.5cm} \leq \|f'{}'\|_\infty \left\vert \zeta_i(z + t) -\zeta_i(z)\right\vert^2 \leq \frac{t^2 \|f'{}'\|_\infty}{\left(1 - \Delta_i \|f'\|_\infty\right)^2}
   \end{align*} 
   Dividing the upper identity by $t$ we can bound:
   \begin{align*}
     \left\vert \frac{1}{t} \left(\zeta_i(z + t) - \zeta_i(z)\right)  -1 - \frac{\Delta_i}{t} f_i'(\zeta_i(z))(\zeta_i(z + t) -\zeta_i(z))\right\vert \leq \frac{t \|f'{}'\|_\infty\Delta_i}{\left(1 - \Delta_i \|f'\|_\infty\right)^2}.
   \end{align*}
   We can then let $t$ tend to $0$ to conclude that $\zeta_i$ is differentiable and we obtain the identity:
   \begin{align*}
     \zeta_i'(t) = 1 + \Delta_i f_i'(\zeta_i(t)) \zeta_i'(t).
   \end{align*}
   and one easily deduce from this formulation of $\zeta'_i$ the expression of $\xi_i'$
\end{proof} 

The formulation of $\xi_i'$ leads us to introducing the notation
\begin{align*}
   \tilde Q \equiv (I_p - \frac{1}{n}\sum_{i=1}^n\mathbb E[\xi_i'(x_i^T Y_{-i})] \Sigma_i)^{-1}&
 \end{align*} 
 that satisfies the estimation:
 \begin{corollary}\label{cor:estimation_tQ_xi_concentration_Q}
   $\left\Vert \tilde Q - \tilde Q^{\Delta}(\mathbb E[D]) \right\Vert_F \leq O \left( \frac{\log n}{\sqrt n} \right)$
 \end{corollary}
 The proof can be done with the same diagonal matrix $\Delta$ but we think it would be more interesting to wait for the final definition of $\Delta$ to show the stronger result.

\subsection{Expression of the mean and covariance of $Y$.}\label{sse:estimation_gaussian_setting}
Introducing the notation $\forall i \in[n]$, $\xi_i = f \circ \zeta_i$, next Proposition gives us first estimations of the deterministic objects:
\begin{align*}
  m_Y \equiv \mathbb E[Y]&
  &\text{and}&
  &\Sigma_Y \equiv  \mathbb E[YY^T],
\end{align*}
\begin{proposition}\label{pro:estimation_moyenne_covariance_Y}
Noting $\check Y = \frac{1}{n} \sum_{i=1}^n\xi_i(x_i^T Y_{-i})x_i$, we can approximate:
\begin{align*}
   \left\Vert Y - \check Y\right\Vert \in O \left(\frac{\log n}{\sqrt n}\right) + \mathcal E_1 \left(\frac{\log n}{\sqrt n}\right)
 \end{align*} 
 and we can estimate $\left\Vert m_Y-  \mathbb E \left[\check Y\right]\right\Vert , \left\Vert \Sigma_Y -  \mathbb E \left[\check Y \check Y^T\right]\right\Vert_* \leq O \left(\frac{\log n}{\sqrt n}\right)$.
\end{proposition}
\begin{proof}
 
Let us bound:
\begin{align*}
  \left\Vert Y - \check Y\right\Vert  
  &\leq \left\Vert  \frac{1}{n} \sum_{i=1}^n \left( f_i(x_i^T Y) -f_i(\zeta_i(x_i^T Y_{-i})) \right)x_i \right\Vert 
  \leq O \left(\sup_{1\leq i \leq n}\left\vert x_i^TY - \zeta_i(x_i^T Y_{-i})\right\vert\right)
\end{align*}
 Besides, we know from Propositions~\ref{pro:point_fixe_x_iY} that $(|x_i^TY - \zeta_i(x_i^T Y_{-i})|)_{1\leq i \leq n} \in \mathcal E_1 \left(\frac{\log n}{\sqrt n}\right) $ in $(\mathbb R^{n}, \| \cdot \|_\infty)$, thus Proposition~\ref{pro:tao_conc_exp} implies that:
\begin{align*}
   \sup_{1\leq i \leq n}\left\vert x_i^TY - \zeta_i(x_i^T Y_{-i})\right\vert 
  \in O \left(\frac{\log n}{\sqrt n}\right) + \mathcal E_1 \left(\frac{\log n}{\sqrt n}\right),
\end{align*}
from which we deduce the first result of the proposition. The estimation of the expectation and the non-centered covariance of $Y$ is a direct consequence, indeed, for the covariance, note that for any deterministic matrix $A \in \mathcal{M}_{p} $ such that $\|A\| \leq 1$:
\begin{align*}
  \left\vert \tr \left( A \left( \Sigma_Y -  \mathbb E \left[\check Y\check Y^T\right] \right) \right) \right\vert 
  &\leq\left\vert \mathbb E \left[ Y^T AY -  \check Y^TA\check Y\right]  \right\vert 
   =  \left\vert \mathbb E \left[(Y - \check Y)^T A (Y + \check Y) \right]  \right\vert\\ 
  &\leq O \left( \|A\| \mathbb E[\|Y - \check Y \|] \right) \leq O \left( \frac{\log n}{\sqrt n} \right)
\end{align*}
\end{proof}

  \subsection{Computation of the estimation of the mean and covariance of $Y$ when $X$ is Gaussian}
The estimation given by Proposition~\ref{pro:estimation_moyenne_covariance_Y} becomes particularly interesting when $X$ is Gaussian because in that case, the random variable $z_i \equiv x_i^T Y_{-i}$ is also Gaussian (when all the random vectors $x_j$ are fixed, for $j \neq i$) and admits the statistics:
\begin{align*}
  \mathbb E_i[z_i] = m_i^T Y_{-i}&
  &\text{and}&
  & \mathbb E_i[z_i^2] = Y_{-i}^T\Sigma_i Y_{-i}
\end{align*}
(where we recall that $m_i \equiv \mathbb E[x_i]$ and $\Sigma_i \equiv \mathbb E[x_ix_i^T]$). The estimation of quantities of the form $\mathbb E_{j} \left[ \xi(z_j) u^Tx_j\right]$ will then be done in two steps that will be justified by Lemmas~\ref{lem:Stein} and~\ref{lem:integration_moyenne_et_covariance_concentres} below:
\begin{enumerate}
  \item ``separate'' with Stein-like identities, the ``functional part'' $\xi(z_j)$ from the ``vectorial part'' $u^Tx_j$ in $\mathbb E_j \left[ \xi(z_j) u^Tx_j \right]$,
  \item show that the randomness brought by $Y_{-i}$ in $z_i$ can be neglected so that it can be approximated by a Gaussian random variable $\tilde z_i \sim \mathcal N(\mu_i,\nu_i)$ with:
  \begin{align}\label{eq:definition_nu_i_mu_i}
  \mu_i \equiv m_i^Tm_Y&
  &\text{and}&
  &\nu_i \equiv \tr(\Sigma_Y\Sigma_i) - \mu_i^2,&
  &i \in[n].
\end{align} 
\end{enumerate}

\begin{proposition}\label{pro:first_approximation_m_C_Y}
  Introducing the quantities:
\begin{align*}
  \tilde m_Y^{(1)} = \frac{1}{n} \sum_{i=1}^n \mathbb E[\xi_i(\tilde z_i)] \tilde Qm_i& 
  &\text{and}&
  &\tilde C_Y^{(1)} = \frac{1}{n} \sum_{i=1}^n \mathbb E[\xi_i(\tilde z_i)^2] \tilde Q\Sigma_i \tilde Q \nonumber
\end{align*}
we have the estimations $\|m_Y - \tilde m_Y^{(1)} \|, \|C_Y - \tilde C_Y^{(1)} \|_* \leq O \left( \frac{\log n}{\sqrt n} \right)$.
\end{proposition}
The estimation merely rely on two lemmas. The first one is a derivation of the Stein identity:
\begin{lemma}\label{lem:Stein}
  Given a Gaussian vector $x \sim \mathcal N(\mu, C)$ for $\mu \in \mathbb R^p$ and $C \in \mathcal M_{p}$ positive symmetric, two deterministic vectors $w,u \in \mathbb R^p$, and a deterministic matrix $A \in \mathcal M_{p,n} $, we have the identities:
    \begin{align*}
    \mathbb E [f_i(w^T x) u^T x] 
    &= \mathbb E [f_i(w^T x) ]  u^T\mu +  \mathbb E [ f_i'(w^T x) ]u^TC w\\
    \mathbb E [f_i(w^T x)x^TAx] 
    &= \mathbb E [f_i(w^T x) ]  \tr \left(A(\mu\mu^T + C)\right) + \mathbb E [ f_i'(w^T x) ]w^T C\left(A +A^T\right)\mu \\
    &\hspace{1cm}  + \mathbb E [f'{}'(w^T x) ]w^T C ACw 
\end{align*}
\end{lemma}  
The second lemma allows us to integrate over $Y$, it will be proven after the proof of the proposition.
\begin{lemma}\label{lem:integration_moyenne_et_covariance_concentres}
  Given two (sequences of) random variables $\mu \in \mathbb R$ and $\nu \in \mathbb R$ such that $0<\tilde \nu \leq O(1)$ and two (sequences of) deterministic variable $\tilde \mu, \tilde \nu \in \mathbb R$ satisfying:
  \begin{align*}
    \mu \in  \tilde \mu \pm \mathcal E_2 \left(\frac{1}{\sqrt n}\right) &
    &\text{and}&
    &\nu \in  \tilde \nu \pm \mathcal E_2 \left(\frac{1}{\sqrt n}\right) ,
  \end{align*}
  if we consider a differentiable mapping $f : \mathbb R \to \mathbb R$ satisfying $\|f'\|_ \infty \leq  O(1)$, 
  then for any Gaussian random variable $z \sim \mathcal N(\mu, \nu)$, independent with $\nu$ and $\mu$:
  \begin{align*}
     \mathbb E_z[f_i(z)] \in \mathbb E[f_i(\tilde z)] \pm \mathcal E_2 \left(\frac{1}{\sqrt n}\right)
  \end{align*}
  where $\mathbb E_z$ is the expectation taken only on the variation of $z$ and $\tilde z \sim \mathcal N(\tilde \mu,\tilde \nu)$.
\end{lemma}
\begin{proof}[Proof of Proposition~\ref{pro:first_approximation_m_C_Y}]
  Thanks to Lemma~\ref{lem:Stein}, we can express for any $u \in \mathbb R^p$ symmetric, such that $\|u\| \leq O(1)$ and noting $\mathbb E_{-i} = \prod_{\genfrac{}{}{0pt}{2}{1\leq j \leq n}{j \neq i}} \mathbb E_{j}$:
\begin{align*}
  u^T m_Y
   &= \frac{1}{n}\sum_{i=1}^n \mathbb E_{-i} \left[ \mathbb E_i \left[ \xi_i(z_i) u^Tx_i \right] \right]+ O\left(\frac{\log n}{\sqrt n}\right)\\
  &= \frac{1}{n}\sum_{i=1}^n \mathbb E[\xi_i(z_i)] u^Tm_i + \mathbb E_{-i}  \left[ \mathbb E_i[\xi_i'(z_i)]u^T C_i Y_{-i} \right] + O\left(\frac{\log n}{\sqrt n}\right)\\
   &= \frac{1}{n}\sum_{i=1}^n \mathbb E[\xi_i(z_i)] u^Tm_i + \mathbb E[\xi_i'(z_i)]u^T C_i m_Y + O\left(\frac{\log n}{\sqrt n}\right)
\end{align*}
thanks to Lemma~\ref{lem:integration_moyenne_et_covariance_concentres} and Corollary~\ref{cor:statistique_Y_Y_m_i}. We can then deduce, replacing $ u $ by $u^T \tilde Q$:
\begin{align*}
  u^Tm_Y
   &= \frac{1}{n}\sum_{i=1}^n \mathbb E[\xi_i(z_i)] u^T\tilde Qm_i + O\left(\frac{\log n}{\sqrt n}\right).
\end{align*}

To estimate the covariance, we can once again deduce from Propoition~\ref{pro:lien_Y_Y_mi} that for any $ A \in \mathcal{M}_{p}$ such that $\|A\| \leq 1$:
\begin{align*}
  \tr(A \Sigma_Y) 
  &= \mathbb E[Y^TAY] = \frac{1}{n}\sum_{i=1}^n \mathbb E_{-i} \left[ \mathbb E_i \left[ \xi_i(z_i) Y^TAx_i \right] \right]+ O\left(\frac{\log n}{\sqrt n}\right)\\
  &= \frac{1}{n}\sum_{i=1}^n \mathbb E_{-i} \left[ \mathbb E_i \left[ \xi_i(z_i) Y_{-i}^TAx_i \right] \right] + \frac{1}{n}\mathbb E_{-i} \left[ \mathbb E_i \left[ \xi_i(z_i)^2 x_i^T QAx_i \right] \right]+ O\left(\frac{\log n}{\sqrt n}\right) \\
  &= \frac{1}{n}\sum_{i=1}^n  \mathbb E\left[ \xi_i(z_i)  \right] \tr (m_Y Am_i) + \mathbb E\left[ \xi_i'(z_i)  \right] \tr (\Sigma_Y A C_i) \\
  &\hspace{1cm} + \frac{1}{n} \sum_{i=1}^n \mathbb E \left[ \xi_i(z_i)^2 \tr(\Sigma_i QA) \right]+ O\left(\frac{\log n}{\sqrt n}\right)
\end{align*}
note that we could remove several terms from the formula of Lemma~\ref{lem:integration_moyenne_et_covariance_concentres}, since $\|Y_{-i}\| \leq 1$.
Replacing $A$ by $A \tilde Q$, and employing Theorem~\ref{the:concentration_resolvente_avec_diagonale_aleatoire} we then obtain:
\begin{align*}
  \tr(A \Sigma_Y) 
  = m_Y^T A m_Y + \frac{1}{n}\sum_{i=1}^n \mathbb E \left[ \xi_i(z_i)^2 \tr(\Sigma_i \tilde QA \tilde Q) \right] + O\left(\frac{\log n}{\sqrt n}\right)
\end{align*}
\end{proof}


\begin{proof}[Proof of Lemma~\ref{lem:integration_moyenne_et_covariance_concentres}]
  Let us introduce a Gaussian random variable $y \sim \mathcal N(0,1)$, independent with $\mu$ and $\nu$. We can express:
  \begin{align*}
    \mathbb E_z[f_i(z)] \in \mathbb E[f_i(\tilde z)] = \mathbb E_y[f_i(\mu + \sqrt{\nu} y)] \equiv \phi(\mu, \nu)
  \end{align*}
 The mapping $y \mapsto f_i(\mu + \sqrt{\nu} y) e^{-y^2/2}$ is bounded, we can thus differentiate $\phi$ and we can bound:
  \begin{align*}
     \frac{\partial \phi}{\partial \nu} = \mathbb E_y[\sqrt{\nu}f_i'(\mu + \sqrt{\nu} y)] \leq O(1)&
     &\frac{\partial \phi}{\partial \mu} = \mathbb E_y[f_i'(\mu + \sqrt{\nu} y)] 
     \leq O(1)
   \end{align*}
   Therefore as $O(1)$-Lipschitz transformations of $\mu,\nu$ under $\mathcal A_{\mu,\nu}$, we obtain the concentration $\phi(\mu,\nu) \in \phi(\tilde \mu, \tilde \nu) \pm \mathcal E_2(1/\sqrt n) $ (see Remark~\ref{rem:a_basse_dimension_lipschitz_egal_lineaire_equivalent_deterministe_rentre_dans_la_fonction}), which is exactly the result of the proposition.
\end{proof}
\appendix
\section{Important elements of concentration of measure Framework}\label{app:concentration_of_the_measure}
\begin{proposition}\label{pro:stabilite_lipschitz_mappings}
  In the setting of Definition~\ref{def:concentrated_sequence}, given a sequence $(\lambda_p)_{p\geq 0} \in \mathbb R_+^{\mathbb N}$, a supplementary sequence of normed vector spaces $(E_p', \Vert \cdot \Vert_p')_{p\geq 0}$ and a sequence of $\lambda_p$-Lipschitz transformations $F_p : (E_p, \Vert \cdot \Vert_p) \rightarrow (E_p', \Vert \cdot \Vert_p')$, we have
  \begin{align*}
    Z_p \propto \mathcal E_q(\sigma_p)&
    &\Longrightarrow&
    &F_p(Z_p) \propto \mathcal E_q(\lambda_p\sigma_p).
  \end{align*}
\end{proposition}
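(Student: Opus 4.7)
The plan is to take an arbitrary $1$-Lipschitz observation $g$ of $F_p(Z_p)$ and pull it back through $F_p$ to an observation of $Z_p$, then invoke the concentration assumption on $Z_p$. Pick any $1$-Lipschitz $g:(E_p',\|\cdot\|_p')\to\mathbb R$. By the elementary chain inequality for Lipschitz constants, $g\circ F_p:(E_p,\|\cdot\|_p)\to\mathbb R$ satisfies
\begin{align*}
|g(F_p(x))-g(F_p(y))| \leq \|F_p(x)-F_p(y)\|_p' \leq \lambda_p \|x-y\|_p,
\end{align*}
so $g\circ F_p$ is $\lambda_p$-Lipschitz, and consequently $h_p := (g\circ F_p)/\lambda_p$ (or, if $\lambda_p=0$, the constant map $g\circ F_p$, which trivially concentrates) is $1$-Lipschitz from $E_p$ to $\mathbb R$.

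Next, I would apply the third formulation of Definition~\ref{def:concentrated_sequence} to the $1$-Lipschitz map $h_p$: there exist $C,c>0$, independent of $p$, such that for every $t>0$,
\begin{align*}
\mathbb P\bigl(|h_p(Z_p)-\mathbb E[h_p(Z_p)]|\geq t\bigr) \leq C\,e^{-(t/c\sigma_p)^q}.
\end{align*}
Multiplying the event inside by $\lambda_p$ and substituting $t\leftarrow t/\lambda_p$ yields
\begin{align*}
\mathbb P\bigl(|g(F_p(Z_p))-\mathbb E[g(F_p(Z_p))]|\geq t\bigr) \leq C\,e^{-(t/c\lambda_p\sigma_p)^q},
\end{align*}
where existence of the expectation of $g(F_p(Z_p))$ follows exactly as in Remark~\ref{rem:esperence_definie_fonctionnelle}. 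Since $g$ was an arbitrary $1$-Lipschitz observation and the constants $C,c$ are uniform in $p$ (inherited from the concentration of $Z_p$), this is precisely the third equivalent formulation of $F_p(Z_p)\propto \mathcal E_q(\lambda_p\sigma_p)$.

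There is no real obstacle here: the proposition is a one-line reformulation of Definition~\ref{def:concentrated_sequence}, built on the trivial fact that the composition $g\circ F_p$ of a $1$-Lipschitz and a $\lambda_p$-Lipschitz map is $\lambda_p$-Lipschitz. The only minor point to be careful about is the degenerate case $\lambda_p=0$, handled by noting that $F_p$ is then constant and $F_p(Z_p)$ is deterministic, so the concentration inequality holds trivially. The same template will be reused implicitly throughout the paper whenever we transfer concentration across Lipschitz operations.
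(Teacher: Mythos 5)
Your proof is correct and is exactly the intended argument: the paper states this proposition without proof, treating it as an immediate consequence of Definition~\ref{def:concentrated_sequence} via the fact that composing a $1$-Lipschitz observation with a $\lambda_p$-Lipschitz map yields a $\lambda_p$-Lipschitz observation. Your rescaling-by-$\lambda_p$ step and the handling of the degenerate case $\lambda_p=0$ are both sound.
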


\begin{proposition}\label{pro:stabilite_concentration_lineaire_affine}
  Given two normed vector spaces $(E,\|\cdot\|_E)$ and $(F,\|\cdot\|_F)$, a random vector $Z \in E$, a deterministic vector $\tilde Z \in E$ and an affine mapping $\phi \in \mathcal A(E,F)$ such that $\|\mathcal L(\phi)\|_{\mathcal L}\leq \lambda$:
  \begin{align*}
    Z \in \tilde Z \pm \mathcal E_q(\sigma)&
    &\Longrightarrow&
    &\phi(Z) \in \phi(\tilde Z) \pm \mathcal E_q(\lambda\sigma).
  \end{align*}
\end{proposition}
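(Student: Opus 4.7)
The plan is to reduce linear concentration of $\phi(Z)$ to linear concentration of $Z$ by pulling linear forms on $F$ back to linear forms on $E$ through the linear part $\mathcal L(\phi)$.

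First I would fix an arbitrary continuous linear form $f \in F'$ with $\|f\|_{F'} \leq 1$ and compute
\begin{align*}
  f(\phi(Z)) - f(\phi(\tilde Z)) = f\bigl(\mathcal L(\phi)(Z) + \phi(0)\bigr) - f\bigl(\mathcal L(\phi)(\tilde Z) + \phi(0)\bigr) = (f \circ \mathcal L(\phi))(Z - \tilde Z),
\end{align*}
using that $\phi - \phi(0) = \mathcal L(\phi)$ is linear and $\tilde Z$ is deterministic so $f(\phi(0))$ cancels.

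Next I would observe that $g := f \circ \mathcal L(\phi)$ is a continuous linear form on $E$ and, by submultiplicativity of the operator norm,
\begin{align*}
  \|g\|_{E'} \leq \|f\|_{F'} \cdot \|\mathcal L(\phi)\|_{\mathcal L(E,F)} \leq \lambda.
\end{align*}
Hence $h := g/\lambda$ (with the trivial case $\lambda = 0$ handled separately since then $\phi$ is constant and the conclusion is immediate) is a unit-normed linear form on $E$, and the hypothesis $Z \in \tilde Z \pm \mathcal E_q(\sigma)$ from Definition~\ref{def:linear_concentration} yields constants $C,c > 0$ independent of $p$ such that for every $t > 0$,
\begin{align*}
  \mathbb P\bigl(|h(Z) - h(\tilde Z)| \geq t\bigr) \leq C e^{-(t/c\sigma)^q}.
\end{align*}

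Finally I would rescale: since $f(\phi(Z)) - f(\phi(\tilde Z)) = \lambda\, h(Z-\tilde Z)$, substituting $t \mapsto t/\lambda$ gives
\begin{align*}
  \mathbb P\bigl(|f(\phi(Z)) - f(\phi(\tilde Z))| \geq t\bigr) \leq C e^{-(t/c\lambda\sigma)^q}
\end{align*}
for every unit-normed $f \in F'$, which is exactly the definition of $\phi(Z) \in \phi(\tilde Z) \pm \mathcal E_q(\lambda\sigma)$. There is no real obstacle here; the argument is essentially bookkeeping to check that the concentration constants $C$ and $c$ are preserved and only $\sigma$ is multiplied by $\lambda$, which is precisely what the operator-norm bound on $\mathcal L(\phi)$ provides.
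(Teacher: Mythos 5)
Your proof is correct, and the paper does not actually spell one out: it states the proposition after the remark ``Of course linear concentration is stable through affine transformations,'' treating it as immediate. Your argument --- cancel $\phi(0)$, pull each linear form $f \in F'$ back to $g = f \circ \mathcal L(\phi)$ on $E$, bound $\|g\|_{E'} \leq \lambda$ by submultiplicativity, and rescale $t \mapsto t/\lambda$ --- is exactly the bookkeeping the paper implicitly invokes, with the degenerate case $\lambda = 0$ correctly set aside.
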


The next lemma is a formal expression of the assessment that ``any deterministic vector located at a distance smaller than the observable diameter to a deterministic equivalent is also a deterministic equivalent''.
\begin{lemma}\label{lem:diametre_observable_pivot}
  Given a random vector $Z \in E$, a deterministic vector $\tilde Z \in E$ such that $Z \in \tilde Z \pm \mathcal E_q(\sigma)$, we have then the equivalence:
  \begin{align*}
    Z \in \tilde Z' \pm \mathcal E_q(\sigma)&
    &\Longleftrightarrow&
    & \left\Vert \tilde Z - \tilde Z'\right\Vert \leq O(\sigma)
  \end{align*}
\end{lemma}
\begin{remark}\label{rem:a_basse_dimension_lipschitz_egal_lineaire_equivalent_deterministe_rentre_dans_la_fonction}
  For random variables, or law rank random vectors, the notions of Lipschitz concentration and linear concentration are equivalent. More over, if $Z$ is a random variable satisfying $Z \in \mathcal E_q(\sigma)$, for any $1$-Lipschitz mapping $f: \mathbb R \to \mathbb R$, we have:
  \begin{align*}
    f(Z) \in f(\mathbb E[Z]) \pm \mathcal E_q(\sigma).
  \end{align*}
  Indeed $f(Z) \in \mathbb E[f(Z)] \pm \mathcal E_q(\sigma)$  and: $$|\mathbb E[f(Z)] -f(\mathbb E[Z]) | \leq \mathbb E [ |f(Z) - f(\mathbb E[Z])|] \leq \mathbb E[|Z - \mathbb E[Z]|] = O(\sigma),$$
  thanks to Proposition~\ref{pro:characterization_moments}. The same holds for a random vector $Z=(Z_1,\ldots, Z_d) \in \mathbb R^d$ if $d \leq O(1)$, because we can bound:
  \begin{align*}
    \mathbb E [ \|f(Z) - f(\mathbb E[Z])\|] \leq \sqrt{ \sum_{i=1}^d\mathbb E [ (f(Z_1) - f(\mathbb E[Z_d]))^2]} \leq \sqrt d O(\sigma) = O(\sigma),
  \end{align*}
  thanks again to Proposition~\ref{pro:characterization_moments}, since for all $i\in [d]$, $Z_i \in \mathcal E_q(\sigma)$
\end{remark}
Let a precise characterization of the linearly concentrated random vectors of the (sequence of) normed vector space $\mathbb R^p$ thanks to a bound on the moments, as we did in Proposition~\ref{pro:characterization_moments}. 
\begin{definition}[Moments of random vectors]\label{def:moment_vecteur_aleatoire}
  Given a random vector $X \in \mathbb R^p$ and an integer $r \in \mathbb N$, we call the ``$r^{\textit{th}}$ moment of $X$'' the symmetric $r$-linear form
$C_r^X : (\mathbb R^p)^r \to \mathbb R$ defined for any $u_1,\ldots,u_r \in \mathbb R^p$ with:
\begin{align*}
  C_r^X(u_1,\ldots,u_p) = \mathbb E \left[\prod_{i=1}^p\left(u_i^TX - \mathbb E[u_i^TX]\right)\right].
\end{align*}
When $r=2$, we retrieve the covariance matrix.
\end{definition}
Given an $r$-linear form $S$ of $\mathbb R^p$ we note its operator norm:
\begin{align*}
  \|S\| \equiv \sup_{\|u_1\| ,\ldots,\|u_r\|\leq 1}S(u_1,\ldots,u_p),
\end{align*}
when $S$ is symmetric we employ the simpler formula $\|S\| = \sup_{\|u\|\leq 1}S(u,\ldots,u)$.
We have then the following characterization that we give without proof since it is a simple consequence of the definition of linearly concentrated random vectors an Proposition~\ref{pro:characterization_moments}.
\begin{proposition}\label{pro:carcaterisation_vecteur_linearirement_concentre}
  Given $q>0$, a sequence of random vectors $X_p \in \mathbb R^p$, and a sequence of positive numbers $\sigma_p >0$, we have the following equivalence:
  \begin{align*}
    X \in \mathcal E_q(\sigma)&
    &\Longleftrightarrow&
    &\exists C,c>0, \forall p \in \mathbb N, \forall r \geq q : \|C^{X_p}_r\| \leq C \left(\frac{r}{q}\right)^{\frac{r}{q}}(c\sigma_p)^r
  \end{align*}
\end{proposition}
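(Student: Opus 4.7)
The plan is to combine the moment characterization of Lipschitz concentration from Proposition~\ref{pro:characterization_moments}, applied to scalar linear observations, with a multilinear Hölder argument to bridge the one-dimensional bound and the operator norm of $C_r^X$. The crucial observation is that Definition~\ref{def:linear_concentration} reduces the concentration of $X$ to the concentration of each scalar random variable $u^T X$ for $\|u\|\leq 1$, and the proof of Proposition~\ref{pro:characterization_moments} applies verbatim in this one-dimensional setting.

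For the forward direction ($\Rightarrow$), assume $X\in \mathbb E[X]\pm\mathcal E_q(\sigma)$. Pick any $u_1,\dots,u_r\in\mathbb R^p$ with $\|u_i\|\leq 1$. Since each linear form $x\mapsto u_i^T x$ is $1$-Lipschitz and $u_i^T X \in u_i^T\mathbb E[X]\pm\mathcal E_q(\sigma)$, the scalar-valued analogue of Proposition~\ref{pro:characterization_moments} yields constants $C,c>0$ (independent of $p$ and $i$) with
\begin{align*}
  \mathbb E\bigl[|u_i^T(X-\mathbb E[X])|^r\bigr] \leq C \left(\frac{r}{q}\right)^{r/q}(c\sigma)^r.
\end{align*}
The generalized Hölder inequality (with $r$ factors and conjugate exponents all equal to $r$) then gives
\begin{align*}
  |C_r^X(u_1,\dots,u_r)|
  = \Bigl|\mathbb E\Bigl[\prod_{i=1}^r u_i^T(X-\mathbb E[X])\Bigr]\Bigr|
  \leq \prod_{i=1}^r \mathbb E\bigl[|u_i^T(X-\mathbb E[X])|^r\bigr]^{1/r}
  \leq C\left(\frac{r}{q}\right)^{r/q}(c\sigma)^r,
\end{align*}
so taking the supremum over unit-norm $u_i$'s delivers the required bound on $\|C_r^X\|$.

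For the backward direction ($\Leftarrow$), fix a unit-norm linear form $u$ and specialize $C_r^X$ to the diagonal $u_1=\cdots=u_r=u$. For every \emph{even} integer $r\geq q$ one gets $\mathbb E[|u^T(X-\mathbb E[X])|^r]= C_r^X(u,\dots,u)\leq \|C_r^X\|\leq C(r/q)^{r/q}(c\sigma)^r$; for odd $r$ Jensen's inequality $\mathbb E[|Y|^r]\leq \mathbb E[|Y|^{r+1}]^{r/(r+1)}$ reduces the control to the even case at the cost of a harmless multiplicative constant. Hence the scalar $Y=u^T(X-\mathbb E[X])$ satisfies $\mathbb E[|Y|^r]\leq C'(r/q)^{r/q}(c'\sigma)^r$ for every real $r\geq q$, and a standard Markov-plus-optimization argument (choose $r$ of order $(t/c'\sigma)^q$) converts this into the tail bound $\mathbb P(|Y|\geq t)\leq C'' e^{-(t/c''\sigma)^q}$, uniformly in $u$. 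This is precisely the linear concentration $X\in \mathbb E[X]\pm\mathcal E_q(\sigma)$.

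The main obstacle is purely book-keeping: tracking how the constants $C$ and $c$ transform across the chain tail\,$\to$\,moments\,$\to$\,multilinear moments\,$\to$\,diagonal moments\,$\to$\,tail, and verifying that each transformation preserves the property of being $O(1)$ and $O(\sigma)$ respectively, independently of $p$. No ingredient beyond Proposition~\ref{pro:characterization_moments}, Hölder's inequality and Markov's inequality is needed, which is indeed why the authors state the result without proof.
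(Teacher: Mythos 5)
The paper deliberately omits the proof, saying only that the result ``is a simple consequence of the definition of linearly concentrated random vectors and Proposition~\ref{pro:characterization_moments}.'' Your proof is precisely the argument the authors are alluding to, and it is correct. The forward direction correctly combines the scalar moment characterization of the linear observations $u_i^TX$ with the generalized H\"older inequality (all exponents equal to $r$) to obtain the bound on $\|C_r^X\|$; the backward direction correctly handles the sign issue for odd moments via Lyapunov's inequality $\mathbb E[|Y|^r]\leq\mathbb E[|Y|^{r+1}]^{r/(r+1)}$ before applying Markov with an optimized $r$.

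One small point worth making explicit: when you invoke ``the scalar-valued analogue of Proposition~\ref{pro:characterization_moments}'' on $u_i^TX$, note that Proposition~\ref{pro:characterization_moments} is stated for \emph{Lipschitz} concentration ($\propto$), while the hypothesis $X\in\mathcal E_q(\sigma)$ only gives you \emph{linear} concentration of each $u_i^TX$. The step is nonetheless valid because for a scalar random variable linear and Lipschitz concentration coincide — this is exactly Remark~\ref{rem:a_basse_dimension_lipschitz_egal_lineaire_equivalent_deterministe_rentre_dans_la_fonction} — or one can bypass the appeal to Proposition~\ref{pro:characterization_moments} altogether and integrate the tail bound $\mathbb P(|u_i^T(X-\mathbb E X)|\geq t)\leq Ce^{-(t/c\sigma)^q}$ directly to get $\mathbb E[|u_i^T(X-\mathbb E X)|^r]\leq rC(c\sigma)^r\Gamma(r/q)/q\leq C'(r/q)^{r/q}(c'\sigma)^r$. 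Either way, the constants are uniform in $p$, $i$ and $u_i$, which is the property you need. Your bookkeeping across the chain tail $\to$ moments $\to$ multilinear $\to$ diagonal $\to$ tail is sound.
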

In particular, if we note $C = \mathbb E[XX^T] - \mathbb E[X]\mathbb E[X]^T$, the covariance of $X\in \mathcal E_q(\sigma)$, we see that $\|C\| \leq O(\sigma)$.

\subsection{Concentration of the norm}

Given a random vector $Z \in (E,\|\cdot \|)$, if $Z \in \tilde Z \pm \mathcal E_q(\sigma)$, the control on the norm $\|Z - \tilde Z \|$ can be done easily when the norm $\left\Vert  \cdot \right\Vert$ can be defined as the supremum on a set of linear forms; for instance when $(E,\|\cdot \|) = (\mathbb{R}^p \left\Vert \cdot \right\Vert_{\infty})$: $\left\Vert x \right\Vert_{\infty} =\sup_{ 1 \leq i \leq p} e_i^T x$ (where $(e_1, \ldots,e_p)$ is the canonical basis of $\mathbb R^p$). We can then bound:
\begin{align*}
    \mathbb{P}\left(\Vert Z - \tilde Z \Vert_{\infty} \geq t\right) 
    &=\mathbb{P}\left(\sup_{1\leq i \leq p} e_i^T(Z - \tilde Z) \geq t\right) \\
    &\leq \min \left(1, p \sup_{1\leq i \leq p}\mathbb{P}\left( e_i^T(Z - \tilde Z) \geq t\right)\right) \\
    &\leq \min \left(1,p C e^{-(t/c)^q}  \right) \ \ \leq \ \max(C,e) \exp \left(-\frac{t^q}{2c^q \log(p)}\right),
\end{align*}
for some $c = O(\sigma)$ and some constant $C>0$.
To manage the infinity norm, the supremum is taken on a finite set $\{e_1, \ldots e_p\}$.

Problems arise when considering the Euclidean norm satisfying for any $x \in \mathbb R^p$ the identity $\left\Vert x \right\Vert = \sup\{ u^T x, \Vert u \Vert \leq 1\}$, indeed, here the supremum is taken on the whole unit ball $\mathcal B_{\mathbb R^p} \equiv \{u \in \mathbb R^p, \Vert u \Vert \leq 1\}$ which is an infinite set.
This loss of cardinality control can be overcome if one introduces so-called $\varepsilon$-nets to discretize the ball with a net $\{u_i\}_{i \in I}$ (with $I$ finite -- $|I |<\infty$) in order to simultaneously 
\begin{enumerate}
  \item approach sufficiently the norm to ensure $$\mathbb{P}\left(\Vert Z - \tilde Z \Vert_{\infty} \geq t\right)\approx\mathbb{P}\left(\sup_{i \in I} u_i^T(Z - \tilde Z) \geq t\right),$$
  \item control the cardinality $|I|$ for the inequality $$\mathbb{P}\left(\sup_{i \in I} u_i^T(Z - \tilde Z) \geq t\right)\leq |I |\mathbb{P}\left( u_i^T(Z - \tilde Z) \geq t\right)$$ not to be too loose (see \cite{TAO12} for more detail).
\end{enumerate}
One can then show:
\begin{align}\label{eq:concentration_norme_euclidienne}
  \mathbb P(\Vert Z - \tilde Z \Vert\geq t) \leq \ \max(C,e) \exp^{-(t/c)^q/p}.
\end{align} 
The approach with $\varepsilon$-nets in $(\mathbb R^p,\|\cdot\|)$ can be generalized to any normed vector space $(E,\|\cdot \|)$ where the norm can be written as a supremum through an identity of the kind 
\begin{align}\label{eq:norm_egal_supremum}
  \forall x \in E : \|x\| = \sup_{f \in H} f(x)&
  &\text{with}\  H\subset E' \ \text{and} \ \dim(\vect(H)) < \infty,
 \end{align} for a given $H\subset E'$ and where $\vect H$ designates the subspace of $E$ generated by $H$. Such a $H\subset E'$ exists in particular when $(E,\|\cdot\|)$ is a reflexive spaces.
\begin{proposition}[\cite{JAM57}]\label{pro:charact_norm_reflexive_space}
  In a reflexive space $(E,\|\cdot\|)$:
  \begin{align*}
    \forall x \in E: \ \left\Vert x \right\Vert = \sup_{f \in \mathcal B_{E'}} f(x)&\text{ where } \mathcal B_{E'} = \{ f \in E' \ | \ \Vert f \Vert \leq 1\}.
  \end{align*}
\end{proposition}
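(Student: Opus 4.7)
The plan is straightforward because this is a classical consequence of the Hahn--Banach theorem. First I would dispatch the easy inequality: for every $f\in\mathcal B_{E'}$, the definition of the dual (operator) norm gives $f(x)\le\|f\|_{E'}\|x\|\le\|x\|$, and taking the supremum yields $\sup_{f\in\mathcal B_{E'}}f(x)\le\|x\|$.

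For the reverse inequality I would construct an explicit norming functional. The case $x=0$ is trivial, so fix $x\in E\setminus\{0\}$ and define $f_0$ on the one-dimensional subspace $F_0=\mathbb R x$ by $f_0(\lambda x)=\lambda\|x\|$; then $|f_0(\lambda x)|=\|\lambda x\|$, so $f_0$ has operator norm exactly $1$ on $F_0$. The Hahn--Banach extension theorem provides $f\in E'$ agreeing with $f_0$ on $F_0$ and satisfying $\|f\|_{E'}=1$; hence $f\in\mathcal B_{E'}$ and $f(x)=\|x\|$. This supplies the matching lower bound and, as a bonus, shows that the supremum is attained at some $f$.

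Finally I would note that the equality $\|x\|=\sup_{f\in\mathcal B_{E'}}f(x)$ is in fact valid in every normed space, with no reflexivity needed. The reflexivity hypothesis stated in the proposition is relevant not to the identity itself but to its intended downstream use: by Banach--Alaoglu combined with reflexivity, $\mathcal B_{E'}$ is weakly compact in $E'$, which is what makes the supremum well-behaved under the $\varepsilon$-net discretization used just above to transfer the linear concentration of $Z$ into a genuine concentration inequality for $\|Z-\tilde Z\|$. There is no real obstacle in the argument; the whole statement reduces to a one-line application of Hahn--Banach, and only the interpretation in the reflexive setting requires the slightly more delicate weak-compactness remark.
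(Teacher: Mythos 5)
Your proof is correct and is the standard one: the inequality $\sup_{f\in\mathcal B_{E'}}f(x)\le\|x\|$ is immediate from the definition of the dual norm, and the reverse inequality (indeed, attainment of the supremum) follows from Hahn--Banach by extending the norming functional $\lambda x\mapsto \lambda\|x\|$ from $\mathbb R x$ to all of $E$ with operator norm~$1$. The paper itself offers no proof, only a citation to \cite{JAM57}, so there is no alternate argument to compare against; yours is the proof any reference for this statement would supply.

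Your side remark deserves emphasis because it spots a slight looseness in the paper's phrasing: the identity $\|x\|=\sup_{f\in\mathcal B_{E'}}f(x)$, with the supremum attained, holds in \emph{every} normed space by Hahn--Banach and does not use reflexivity at all. (James' theorem is concerned with the converse direction --- whether every $f\in E'$ attains its supremum on $\mathcal B_E$, which does characterize reflexivity --- but that is a different statement from the one quoted.) In the paper, reflexivity of $E$ is genuinely used elsewhere: it guarantees completeness (invoked for the Banach fixed-point argument, cf.\ Remark~\ref{rem:reflexif_complet}) and is part of the standing hypotheses for defining the expectation of $E$-valued random vectors (Definition~\ref{def:expectation_reflexive_space}). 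Your suggestion that the relevant compactness fact is weak compactness of $\mathcal B_{E'}$ is plausible but is not quite what the surrounding text leans on; what the norm-degree construction (Definition~\ref{def:norm_degree}) actually requires is a set $H\subset E'$ with $\dim(\mathrm{vect}(H))<\infty$ realizing the norm, which forces the ambient examples to be finite-dimensional in practice. So the conclusion stands: the proposition is a Hahn--Banach fact, your proof is complete, and the reflexivity hypothesis is inessential to the statement itself.
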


When $(E,\|\cdot\|)$ has an infinite dimension and is not reflexive, it is sometimes possible to establish \eqref{eq:norm_egal_supremum} for some $H \subset E$ in some cases (most of them appearing when $\|\cdot\|$ is a semi-norm).
Without going deeper into details, we introduce the notion of \textit{norm degree} that will help us adapting to other normed vector space the concentration rate $p$ appearing in the exponential term of concentration inequality \eqref{eq:concentration_norme_euclidienne} (concerning $(\mathbb R^p, \| \cdot\|)$).
\begin{definition}[Norm degree]\label{def:norm_degree}
  Given a normed (or seminormed) vector space $(E, \Vert \cdot \Vert)$, and a subset $H\subset E'$, the degree $\eta_H$ of $H$ is defined as~:
  \begin{itemize}
      \item $\eta_H \equiv \log(| H|)$ if $H$ is finite,
      \item $\eta_H \equiv \dimm(\vect H)$ if $H$ is infinite.
  \end{itemize}
  If there exists a subset $H \subset E'$ such that \eqref{eq:norm_egal_supremum} is satisfied, we then denote $\eta(E, \Vert \cdot \Vert)$, or more simply $\eta_{\Vert \cdot \Vert}$, the degree of $\Vert \cdot \Vert$, defined as~:
  \begin{align*}
      \eta_{\Vert \cdot \Vert}=\eta(E, \Vert \cdot \Vert)\equiv\inf \left\{\eta_H, H\subset E' \ | \ \forall x \in E, \Vert x \Vert = \sup_{f \in H}f(x)\right\}.
  \end{align*}
\end{definition}

\begin{example}\label{exe:norm_degree}
    We can give some examples of norm degrees~:
    \begin{itemize}
        \item $\eta \left( \mathbb R^p, \Vert \cdot \Vert_\infty \right) = \log(p)$ ($H = \{x \mapsto e_i^Tx, 1\leq i\leq p\}$),
        \item $\eta \left( \mathbb R^p, \Vert \cdot \Vert \right) = p$ ($H = \{x \mapsto u^Tx, u \in \mathcal B_{\mathbb R^p}\}$),
        \item $\eta \left( \mathcal M_{p,n}, \Vert \cdot \Vert \right) = n+p$ ($H = \{M \mapsto u^TMv, (u,v)\in \mathcal B_{\mathbb R^p} \times \mathcal B_{\mathbb R^n}\}$),
        \item $\eta \left( \mathcal M_{p,n}, \Vert \cdot \Vert_F \right) = np$ ($H = \{M \mapsto \tr(AM), A \in \mathcal M_{n,p}, \|A\|_F\leq 1\}$),
        \item $\eta \left( \mathcal M_{p,n}, \Vert \cdot \Vert_* \right) = np$ ($H = \{M \mapsto \tr(AM), A \in \mathcal M_{n,p}, \|A\|\leq 1\}$)\footnote{$\|\cdot\|_*$ is the nuclear norm defined for any $M \in \mathcal M_{p,n}$ as $\|M\|_* = \tr(\sqrt{MM^T})$ it is the dual norm of $\| \cdot \|$, which means that for any $A,B \in \mathcal M_{p,n}$, $\tr(AB^T) \leq \|A\| \|B\|_*$.}.
    \end{itemize}
\end{example}
Depending on the vector space we are working in, we can then employ those different examples and the following proposition to set the concentration of the norm of a random vector.
\begin{proposition}\label{pro:tao_conc_exp}
Given a reflexive vector space $(E, \Vert \cdot \Vert)$ and a concentrated vector $Z \in E$ satisfying $Z \in \tilde Z \pm \mathcal E_q(\sigma)$:
  \begin{align*}
       &\Vert Z - \tilde Z \Vert \in O \left(\eta_{\Vert \cdot \Vert}^{1/q}\sigma\right) \pm \mathcal E_{q}\left(\eta_{\Vert \cdot \Vert}^{1/q}\sigma\right).
  \end{align*}
  
\end{proposition}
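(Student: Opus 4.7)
The plan is to reduce the concentration of $\|Z-\tilde Z\|$ to that of a supremum of linear observations, exploiting the very definition of the norm degree. For any admissible $H\subset E'$ realizing $\|x\|=\sup_{f\in H}f(x)$, every $f\in H$ automatically satisfies $\|f\|_{E'}\leq 1$ (since $f(x)\leq\|x\|$ for all $x$), so the hypothesis $Z\in\tilde Z\pm\mathcal E_q(\sigma)$ directly yields
\begin{align*}
  \mathbb P(|f(Z-\tilde Z)|\geq t)\leq C\,e^{-(t/c\sigma)^q}\qquad\text{uniformly in }f\in H.
\end{align*}
From here I would control $\mathbb P(\|Z-\tilde Z\|\geq t)$ via a union bound whose cost depends on an effective cardinality of $H$, exactly as suggested in the discussion around \eqref{eq:concentration_norme_euclidienne}.

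\textbf{Finite case.} If $H$ can be chosen finite with $\eta_H=\log|H|$ close to $\eta_{\|\cdot\|}$, the union bound gives
\begin{align*}
  \mathbb P(\|Z-\tilde Z\|\geq t)\leq |H|\,C\,e^{-(t/c\sigma)^q}=C\,e^{\eta_H-(t/c\sigma)^q}.
\end{align*}
For $t\geq 2^{1/q}c\sigma\,\eta_H^{1/q}$ the exponent is dominated by $-\tfrac12(t/c\sigma)^q$, which yields the $q$-exponential tail with observable diameter of order $\eta_H^{1/q}\sigma$.

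\textbf{Infinite case.} When $H$ is infinite but $d\equiv\dim(\mathrm{Vect}\,H)<\infty$, I work inside the finite-dimensional subspace $\mathrm{Vect}\,H\cap\mathcal B_{E'}$. A classical volume argument produces a $\tfrac12$-net $H_0\subset H$ (in the operator norm) with $|H_0|\leq 6^d$. The standard $\varepsilon$-net trick gives the key inequality
\begin{align*}
  \|Z-\tilde Z\|=\sup_{f\in H}f(Z-\tilde Z)\leq \sup_{f_0\in H_0}f_0(Z-\tilde Z)+\tfrac12\|Z-\tilde Z\|,
\end{align*}
hence $\|Z-\tilde Z\|\leq 2\sup_{f_0\in H_0}f_0(Z-\tilde Z)$. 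The union bound over the finite set $H_0$ then produces the same type of tail as in the finite case, with $\log|H_0|\leq d\log 6=O(\eta_{\|\cdot\|})$.

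\textbf{Conclusion.} Integrating this tail bound (as in Proposition~\ref{pro:characterization_moments}) yields $\mathbb E\|Z-\tilde Z\|=O(\eta_{\|\cdot\|}^{1/q}\sigma)$, which supplies the ``$O(\eta^{1/q}\sigma)$'' deterministic equivalent, while re-centering the same tail bound around this expectation via the triangle inequality gives the linear $\mathcal E_q(\eta^{1/q}\sigma)$ concentration of $\|Z-\tilde Z\|$ required by the statement. The delicate step is the infinite case: one must make sure that the $\varepsilon$-net is taken with respect to the operator norm on the finite-dimensional subspace $\mathrm{Vect}\,H$ (the pointwise bound $f\leq\|\cdot\|$ already gives $H\subset\mathcal B_{E'}$, so a genuine operator-norm net exists), and that the multiplicative constant $2$ and the universal $\log 6$ coming from the covering bound are absorbed into the $O(\cdot)$ of the final statement.
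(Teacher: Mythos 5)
Your proof is correct and follows exactly the approach the paper sketches in the discussion preceding the proposition: reduce $\|Z-\tilde Z\|$ to a supremum of linear observations over the set $H$ realizing the norm, apply a union bound (directly when $H$ is finite, or after discretizing $H\cap\mathcal B_{E'}$ by a $\tfrac12$-net of size $e^{O(\dim\vect H)}$ when $H$ is infinite), and absorb the $e^{\eta_H}$ cardinality factor into the exponent to obtain the $\mathcal E_q(\eta^{1/q}\sigma)$ tail and the $O(\eta^{1/q}\sigma)$ bound on the expectation. The only cosmetic point worth tidying is that one should fix an admissible $H$ with $\eta_H\leq 2\eta_{\|\cdot\|}$ (the infimum in Definition~\ref{def:norm_degree} need not be attained), but this factor is absorbed in the $O(\cdot)$ exactly as you say.
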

\begin{remark}\label{rem:concentration_norme_lineaire_lipschitz}
  When $Z \propto \mathcal E_q(\sigma)$, we have of course the better concentration $\Vert Z - \tilde Z \Vert \propto \mathcal E_{q}\left(\sigma\right)$ but the bound $\mathbb E \left[\Vert Z - \tilde Z \Vert\right]\leq O \left(\eta_{\Vert \cdot \Vert}^{1/q}\sigma\right)$ can not be improved.
\end{remark}

\begin{example}\label{exe:borne_esp_norm_vecteur_lin_conc}
Given two random vectors $Z \in \mathbb{R}^p$ and $M \in \mathcal M_{p,n}$: 
\begin{itemize}
  \item if $Z\propto \mathcal E_2$ in $(\mathbb{R}^p,\left\Vert \cdot\right\Vert)$~: $\mathbb{E}\left\Vert Z\right\Vert\leq\Vert \mathbb E[Z] \Vert + O(\sqrt{p})$,
  \item if $M \propto \mathcal E_2$ in $(\mathcal M_{p,n}, \left\Vert \, \cdot \,\right\Vert)$~: 
  $\mathbb{E}\left\Vert M\right\Vert \leq \Vert \mathbb E[M]\Vert + O(\sqrt{p+n}),$
  \item if $M \propto \mathcal E_2$ in $(\mathcal M_{p,n}, \left\Vert \, \cdot \,\right\Vert_F)$~:
  $\mathbb{E}\left\Vert M\right\Vert \leq \Vert \mathbb E[M]\Vert_F + O(\sqrt{pn})$.
  \item if $M \propto \mathcal E_2$ in $(\mathcal M_{p,n}, \left\Vert \, \cdot \,\right\Vert_*)$~:
  $\mathbb{E}\left\Vert M\right\Vert_* \leq \Vert \mathbb E[M]\Vert_* + O(\sqrt{pn})$.
\end{itemize}
\end{example}

\subsection{Concentration of basic operations}
Returning to Lipschitz concentration, if we want to control the concentration of the sum $X+Y$ or the product $XY$ of two random vectors $X$ and $Y$, we first need to express the concentration of the concatenation $(X,Y)$. This last result is very easy to obtain in the class of linearly concentrated random vector since it is a consequence of Proposition~\ref{pro:concentration_serie_variables_concentres}, in the class of Lipschitz concentrated vectors, the concentration of $(X,Y)$ is far more complicated, and independence here plays a central role (unlike for linear concentration). 

To understand the issue, let us give an example where $X$ and $Y$ are concentrated but not $(X,Y)$. Consider $X$, uniformly distributed on the sphere $\sqrt p\mathbb S^{p-1}$ and $Y=f(X)$ where for any $x =(x_1,\ldots,x_p) \in \mathbb R^p$, $f(x) = x$ if $x_1\geq 0$ and $f(x) = -x$ otherwise. We know that all the linear observations of $X+Y$ are concentrated thanks to Proposition~\ref{pro:concentration_serie_variables_concentres}, but it is not the case for all the Lipschitz observations. Indeed, it is straight forward to see that the diameter of the observation $\|X+Y\|$ (which is a $\sqrt 2$-Lipschitz transformation of $(X,Y)$) is of order $O(\sqrt p)$ like the metric diameter of $X+Y$ however it should be of order $O(1)$ if $(X,Y)$ would be concentrated. This effect is due to the fact that the mapping $f$ is clearly not Lipschitz, and $Y$ in a sense ``defies'' $X$.

Still there exists two simple ways to obtain the concentration of $(X,Y)$, the first one being deduced from any identity $(X,Y) = \phi(Z)$ with $Z$ concentrated and $\phi$ Lipschitz.
And the second ones arises from independence relations as seen in next lemma.
\begin{lemma}\label{lem:concentration_(X,Y)_independent}
Given $(E,\Vert \cdot \Vert)$, a sequence of normed vector spaces and two sequences of independent random vectors $X,Y \in E$, if we suppose that $X \propto  \mathcal E_{q}(\sigma)$ and $Y \propto \mathcal E_{r}(\rho)$ (where $q,r >0$ are two positive constants and $\sigma,\rho \in \mathbb R_+^{\mathbb N}$ are two sequences of positive reals):
  \begin{align*}
    (X,Y) \propto \mathcal E_{q}\left( \sigma \right) + \mathcal E_{r}\left( \rho \right)&
    &\text{in} \ \ (E^2, \|\cdot\|_{\ell^\infty}),
  \end{align*}
  where we note for all $x,y \in E^2$, $\|(x,y)\|_{\ell^\infty} = \max(\|x\|,\|y\|)$\footnote{One could have also considered a big number of equivalent norms like $\|(x,y)\|_{\ell^1} = \|x\| + \|y\|$ or $\|(x,y)\|_{\ell^2} = \sqrt{\|x\|^2  + \|y\|^2}$.}.
  Following our formalism, this means that there exist two positive constants $C,c>0$ such that $\forall n\in \mathbb N$ and for any $1$-Lipschitz function $f :(E_n^2, \|\cdot \|_{\ell^\infty}) \rightarrow (\mathbb R, | \cdot |)$, $\exists d_n = O(\rho_n), \forall t>0:$ 
     \begin{align*}
      \mathbb P \left(\left\vert f(X_n, Y_n) - f(X_n', Y_n')\right\vert\geq t\right) \leq C e^{(t/c\sigma_n)^q} + C e^{(t/cd_n)^r}.
     \end{align*} 
\end{lemma}

The sum being a $2$-Lipschitz operation (for the norm $\|\cdot \|_{\ell^\infty}$), the concentration of $X+Y$ is easy to handle and directly follows from Lemma~\ref{lem:concentration_(X,Y)_independent}. For the product of random variable, on can employ:
\begin{proposition}[\cite{}]\label{pro:concentration_produit_variable}
  Given two (sequences of) random variables $Z, Z'\in \mathbb R$ and four sequence of parameters $a,a' \in \mathbb R$ and $\sigma,\sigma' >0$, we have the implication\footnote{Notation $Z \in a \pm\mathcal E_{q_1}(\sigma_1) + E_{q_2}(\sigma_2)$ (with $Z$ being a random variable) signifies that there exist two constants $C,c>0$ such that for all $n \in \mathbb N$:
  \begin{align*}
    \mathbb P \left( |Z-a|\geq t \right)
    \leq C e^{-(ct/\sigma_1)^{q_1}} +  C e^{-(ct/\sigma_2)^{q_2}}.
  \end{align*}}:
  \begin{align*}
    \left\{\begin{aligned}
    &Z \in a \pm E_q(\sigma)\\
    & Z' \in a' \pm E_q(\sigma')
    \end{aligned}\right.&
    &\Longrightarrow&
    &ZZ' \in aa' \pm E_q(\sigma a' + \sigma' a) + E_{q/2}(\sigma \sigma').
  \end{align*}
\end{proposition}

The product of random vectors is harder to treat but satisfies similar concentration inequality. We advice the reader to have a look at the work already done in \cite{LOU21HV} where we presented a result giving the concentration of a product of $m$ vectors $Z_1,\ldots, Z_m$. From an hypothesis $(Z_1,\ldots, Z_m) \propto \mathcal E_2$, one can deduce the concentration
\begin{align}\label{eq:concentration_produit_m_termes}
   Z_1 \odot \cdots \odot Z_m \propto \sum_{l=1}^m\mathcal E_{2/l}(\sigma_l),
\end{align} 
 where each $\sigma_l$ depends on the expectations of the norms $(\mathbb E[\| Z_i\|_i])_{i \in [m]}$ for a good choice of norms.

\section{Definition of the expectation}\label{app:definition_expectation}
Given a concentrated random vector $Z \in (E,\|\cdot\|)$, we will need at one point (Theorems~\ref{the:COncentration_lineaire_solution_concentrated_equation_lipschitz} and~\ref{the:lipschitz_COncentration_solution_conentrated_equation_Phi_concentre_norme_infinie}) to be able to consider its expectation. Thanks to the characterization of exponential concentration with a bound on moments, we already know that, if $Z \propto \mathcal E_q(\sigma)$, then for any Lipschitz mapping $f : \mathbb E \mapsto \mathbb R$, the functional $f(Z)$ admits an expectation $\mathbb E[f(Z)] = \int_E f(z) d \mathbb P(Z=z)$. This definition can be first generalized when $E$ is a \textit{reflexive space}. 

Given a normed vector space $(E,\|\cdot \|)$, we denote $(E', \|\cdot \|)$ the so-called ``strong dual'' of $E$, composed of the continuous linear forms of $E$ for the norm $\|\cdot\|$. The norm $\|\cdot \|$ (written the same way as the norm on $E$ for simplicity -- no ambiguity being possible) is called the \textit{strong norm} of $E'$ and defined as follows.
\begin{definition}\label{def:strong_norm}
  Given a normed vector space $(E,\|\cdot\|)$, the strong norm $\|\cdot \|$ is defined on $E'$ as:
  \begin{align*}
    \forall f \in E', \|f \| = \sup_{\|x \| \leq 1} |f(x)|.
  \end{align*}
\end{definition}
To be able to define an expectation in $E$ we first assume that $E$ is \textit{reflexive}.
To this end, we need to first define a ``topological bidual'' of $E$, denoted $(E'{}', \| \cdot \|)$ and defined by $E'{}' = (E')'$ with norm the strong norm of the dual of $E'$.
\begin{definition}\label{def:natural_embedding}
  The ``natural embedding'' of $E$ into $E'{}'$ is defined as the mapping:
  \begin{align*}
    J :
    \begin{aligned}[t]
       E &&\longrightarrow &&E'{}'\hspace{0.6cm}\\
       x &&\longmapsto &&(E' \ni f \mapsto f(x)).
    \end{aligned}
   \end{align*} 
  It can be shown that $J$ is always one-to-one, but not always onto; however, when $J$ is bijective, we say that $E$ is reflexive.
 \end{definition} 

If $E$ is reflexive, then it can be identified with $E'{}'$ (this is in particular the case of any vector space of finite dimension but also of any Hilbert space). One can then define the expectation of any concentrated vector $X \propto \mathcal E_q(\sigma)$ as follows:
\begin{definition}\label{def:expectation_reflexive_space}
  Given a random vector $Z$ of a reflexive space $(E,\|\cdot \|)$, if the mapping $E' \ni f \mapsto \mathbb E[f(Z)] \in \mathbb R$ is continuous on $E'$, we define the expectation of $Z$ as the vector:
  \begin{align}\label{eq:def1_expectation}
    \mathbb E[Z] = J^{-1}(f \mapsto \mathbb E[f(Z)]).
  \end{align}
\end{definition}

\begin{remark}\label{rem:reflexif_complet}
  A reflexive space is a complete space (since it is in bijection with a dual space). It satisfies in particular the Picard Theorem which states that any contractive mapping $f:E\to E$ ($\forall x,y \in E, \|f(x) - f(y) \leq (1-\varepsilon) \|x - y\|$ with $\varepsilon >0$) admits a unique fixed point $y = f(y)$. This property will be particularly interesting when considering the concentration of fixed point of concentrated functions of Reflexive spaces in Section~\ref{sec:concentration_of_the_solutions_to_conc_eq}.
\end{remark}
\begin{lemma}\label{lem:fE_egal_Ef}
  Given a reflexive space $(E,\|\cdot \|)$, a random vector $Z\in E$ and a continuous linear form $f\in E'$: $$f(\mathbb E[Z])  = \mathbb E[f(Z)].$$
\end{lemma}
\begin{proof}
  It is just a consequence of the identity:
  \begin{align*}
    f(\mathbb E[Z]) = J(\mathbb E[Z])(f) = J \left(J^{-1}(f \mapsto \mathbb E[f(Z)])\right)(f) = \mathbb E[f(Z)].
  \end{align*}
\end{proof}
\begin{proposition}\label{pro:existence_expectation_reflexive_concentrated_vector}
  Given a reflexive space $(E,\|\cdot \|)$ and a random vector $Z \in E$, if $Z \propto \mathcal E_q(\sigma)$, then $\mathbb E[Z]$ can be defined with Definition~\ref{def:expectation_reflexive_space}.
\end{proposition}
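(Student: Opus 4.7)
The plan is to verify that the linear map $T: E' \to \mathbb{R}$ defined by $T(f) = \mathbb{E}[f(Z)]$ is well-defined and continuous on $(E', \|\cdot\|)$; then, since $E$ is reflexive, $T$ corresponds via $J^{-1}$ to a unique vector $\mathbb{E}[Z] \in E$ as in Definition~\ref{def:expectation_reflexive_space}.

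First I would check that $T$ is well-defined on all of $E'$. For any $f \in E'$, the very definition of the strong norm gives $|f(x)-f(y)| \leq \|f\|\,\|x-y\|$ for all $x,y \in E$, so $f$ is $\|f\|$-Lipschitz. Applying Remark~\ref{rem:esperence_definie_fonctionnelle} to the concentrated vector $Z \propto \mathcal{E}_q(\sigma)$ and the Lipschitz function $f/\|f\|$ (if $f \neq 0$), we obtain that $f(Z)$ is integrable, so $T(f) = \mathbb{E}[f(Z)]$ exists in $\mathbb{R}$. Linearity of $T$ is immediate from linearity of the expectation.

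Second, I would show $T$ is continuous by exhibiting a constant $K$ such that $|T(f)| \leq K\|f\|$ for every $f \in E'$. The key observation is that the norm $\|\cdot\|: E \to \mathbb{R}$ is itself $1$-Lipschitz (reverse triangle inequality), hence by Remark~\ref{rem:esperence_definie_fonctionnelle} applied to $\|\cdot\|$, the real random variable $\|Z\|$ is integrable: there is a median $m$ of $\|Z\|$, finite because $Z$ takes values in $E$, with
\begin{align*}
\mathbb{E}[\|Z\|] \;\leq\; |m| + \mathbb{E}[|\|Z\| - m|] \;\leq\; |m| + \tfrac{Cc\sigma}{q^{1/q}} \;<\; \infty.
\end{align*}
Then for any $f \in E'$, $|f(Z)| \leq \|f\|\,\|Z\|$ almost surely, so $|T(f)| \leq \|f\|\,\mathbb{E}[\|Z\|]$, which proves $T \in E''$ with $\|T\| \leq \mathbb{E}[\|Z\|]$.

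Finally, reflexivity of $E$ means the natural embedding $J: E \to E''$ is bijective, so I define $\mathbb{E}[Z] := J^{-1}(T) \in E$, which is exactly formula~\eqref{eq:def1_expectation}. The main technical step is the integrability of $\|Z\|$; this is where the full Lipschitz concentration hypothesis (and not just some linear concentration property) is genuinely used, since one needs concentration for the nonlinear $1$-Lipschitz functional $\|\cdot\|$ to bound $\mathbb{E}[\|Z\|]$ by a finite quantity. Everything else is formal manipulation of the definitions of strong dual, reflexivity, and linearity of the integral.
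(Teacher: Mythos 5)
Your proof is correct and takes essentially the same approach as the paper: both reduce to showing the linear form $f\mapsto\mathbb E[f(Z)]$ is bounded on $E'$ by combining $|f(z)|\leq\|f\|\,\|z\|$ with Remark~\ref{rem:esperence_definie_fonctionnelle}. The paper routes this through a uniform bound $m_f\leq K_p\|f\|$ on the medians of $f(Z)$ (obtained from a median-type bound on $\|Z\|$), while you route it through $\mathbb E[\|Z\|]<\infty$ obtained by applying Remark~\ref{rem:esperence_definie_fonctionnelle} once to the $1$-Lipschitz functional $\|\cdot\|$; the two bounds are equivalent and your packaging is marginally cleaner.
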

\begin{proof}
  We just need to show that $f \mapsto \mathbb E[f(Z)]$ is continuous. There exists $K_p > 0$ such that $\mathbb P(\|Z_p \| \leq K_p ) \geq \frac{1}{2}$, so that for any $f \in E'$, $\mathbb P(f(Z_p) \leq K_p \|f\|) \geq \frac{1}{2} $. Therefore, by definition, for any median $m_f$ of $f(Z_p)$, $m_f \leq K_p \|f\|$. The two concentration $f(Z) \in \mathbb E[f(Z)] \pm \mathcal E_q(\sigma)$ and $f(Z) \in m_f \pm \mathcal E_q(\sigma)$ then allows us to obtain a similar bound on $|\mathbb E[f(Z)]|$ which allows us to state that 
  the mapping $f \mapsto |\mathbb E[f(Z)]|$ is continuous.
\end{proof}

It is still possible to define a notion of expectation when $E$ is not reflexive but is a functional vector space having value on a reflexive space $(F,\|\cdot\|)$; for instance a subspace of $F^G$, for a given set $G$.
\begin{definition}\label{def:expectation_functional_spaces}
  Given reflexive space $(F,\|\cdot\|)$, a given set $G$, a subspace $E \subset F^G$, and a random vector $\phi \in E$, if for any $x \in F$, the mapping $F' \ni f \mapsto \mathbb E[f(\phi(x))]$ is continuous, we can defined the expectation of $\phi$ as:
  \begin{align*}
    \mathbb E[\phi] : x \mapsto \mathbb E[\phi(x)].
  \end{align*}
\end{definition}
\begin{remark}\label{rem:definition_expectation_2_consistance}
  When the space $E\subset F^G$ is endowed with a norm $\|\cdot\|$ such that $(E,\|\cdot \|)$ is reflexive and $\forall x \in G$, $E \ni \phi \mapsto \phi(x) $ is continuous, then there is no ambiguity on the definitions. Indeed, if we note $\mathbb E_1[\phi]$ and $\mathbb E_2[\phi]$, respectively the expectation of $\phi$ given by Definition~\ref{def:expectation_reflexive_space} and Definition~\ref{def:expectation_functional_spaces}, we can show for any $x \in F$ and any $f\in F'$:
  \begin{align*}
    f(\mathbb E_1[\phi](x))
    &= \tilde f(\mathbb E_1[\phi]) = \mathbb E[\tilde f(\phi)] = \mathbb E[ f(\phi(x))] = f(\mathbb E[ \phi(x)]) = f(\mathbb E_2[\phi](x)),  
  \end{align*}
  where $\tilde f : E \ni\psi \rightarrow f(\psi(x))$ is a continuous linear form. Since this identity is true for any $f \in E'$, we know by reflexivity of $E$ that $\forall x \in E$: $\mathbb E_1[\phi](x) = \mathbb E_2[\phi](x)$. This directly implies that $\mathbb E_1[\phi] = \mathbb E_2[\phi]$.
\end{remark}

\begin{remark}\label{rem:expectation_of_concentrated_random_mappings}
  Given a random mapping $\phi \in E \subset F^G$ with $(F, \|\cdot \|)$ reflexive, we can then deduce from Proposition~\ref{pro:existence_expectation_reflexive_concentrated_vector} that if for all $x \in G$, $\phi(x) \propto \mathcal E_q(\sigma)$ in $(F, \| \cdot \|)$ then we can define $\mathbb E[\phi]$. 
  With a different formalism, we can endow $F^G$ with the family of semi-norms $(\|\cdot \|_x)_{x \in G}$ defined for any $f \in F^G$ as $\|f\|_x = \|f(x)\|$; then if for all $x \in G$, $\phi\propto \mathcal E_q(\sigma)$ in $(F^G, \|\cdot \|_x)$, it is straightforward to set that $\mathbb E[\phi]$ is well defined.
  This is in particular the case if $E = \mathcal B(G,F)$ is the set of bounded mappings from $G$ to $F$ and $\phi \propto \mathcal E_q(\sigma)$ in $(F^G,\|\cdot \|_{\infty})$ where for all $f \in F^G$, $\|f\|_\infty = \sup_{x \in G} \| f(x)\|$. 
\end{remark}
\section{Concentration of $Y$ solution to $Y = \phi(Y)$ when $\phi$ is locally Lipschitz}\label{app:localement_lipschitz}
For that we first need a preliminary lemma that will allow us to set that a mapping contracting in a sufficiently large compact admits a fixed point. It expresses through the introduction of a new semi-norm defined for any $f \in \mathcal F(E)$, locally Lipschitz as:
\begin{align*}
  \|f\|_{\mathcal L,\mathcal B(y_0, r)} = \sup_{x,z \in  \mathcal B(y_0, r)} \frac{\|f(x) - f(z)\|}{\|x-z\|}.
\end{align*}
\begin{lemma}\label{lem:boule_de_point_fixe}
  Given a mapping $\phi \in \mathcal F(E)$, if there exist two constants $\delta, \varepsilon>0$ and a vector $y_0 \in E$ such that:
  \begin{align*}
    \|\phi \|_{\mathcal L, \mathcal B(y_0,\delta)} \leq 1-\varepsilon&
    &\text{and}&
    &\|\phi(y_0) - y_0\| \leq \delta \varepsilon,
   \end{align*} 
  then for any $y \in \mathcal B(y_0,\delta\varepsilon)$ and any $k\in \mathbb N$:
  \begin{align}\label{eq:borne_fk_y0_K}
    \left\Vert \phi^k(y) - y_0\right\Vert  \leq \delta.
  \end{align}
\end{lemma}
\begin{proof}
  That can be done iteratively. For $k=0$, it is obvious since $\|y - y_0\| \leq \varepsilon\delta \leq \delta$ ($\varepsilon <1$). Now if we suppose that \eqref{eq:borne_fk_y0_K} is true for all $l<k$ and $y \in \mathcal B(y_0,\varepsilon\delta)$ (thus in particular for $y = y_0$), we can bound (under $\mathcal A_{\phi^\infty}$):    
  \begin{align*}
  \left\Vert \phi^k(y) - y_0\right\Vert
    &\leq \left\Vert \phi^k(y) - \phi^k(y_0)\right\Vert + \left\Vert \phi^k(y_0) - y_0\right\Vert \nonumber\\
    &\leq (1-\varepsilon)^k \left\Vert y - y_0\right\Vert + \sum_{i=1}^k \left\Vert \phi^i(y_0) - \phi^{i-1}(y_0)\right\Vert \nonumber\\
    &\leq (1-\varepsilon)^k \varepsilon \delta + \sum_{i=1}^k (1-\varepsilon)^{i-1}\left\Vert \phi(y_0) - y_0\right\Vert\nonumber\ \ \leq \  \delta.
  \end{align*}
\end{proof}
\begin{remark}\label{rem:m_peu-tendre_vers_linfini}
  Note that in the previous proof $k$ can tend to $\infty$ without any change in the concentration bounds. This is due to the fact that for any $l \in [k]$ we used the large bounds:
  \begin{align*}
    (1-\varepsilon)^l \leq 1&
    &\text{and}&
    &\sum_{i=1}^l (1-\varepsilon)^i \leq \frac{1}{\varepsilon}.
  \end{align*}
\end{remark}

\begin{theorem}\label{the:COncentration_lineaire_solution_concentrated_equation_lipschitz}
  Let us consider a (sequence of) reflexive vector space $(E, \| \cdot \|)$ admitting a finite norm degree that we note $\eta$. Given $\phi\in \lip(E)$, a (sequence of) random mapping, we suppose that there exists a (sequence of) integer $\sigma >0$, a constant $\varepsilon>0$ such that for any constant $K>0$, there exists a (sequence of) highly probable event $\mathcal A_K$ such that:
  \begin{itemize}
    \item there exists a (sequence of) deterministic vector $y_0 \in E$ satisfying:
  $$y_0 = \mathbb E_{\mathcal A_{K}}[\phi(y_0)]$$
    \item for all $y \in \mathcal B(y_0, K\sigma \eta^{1/q})$ and for all (sequence of) integer $k$ such that $k \leq O(\log(\eta))$,
  \begin{align*}
    \phi^k(y)\overset{\mathcal A_K}{\in} \mathcal E_q \left(\sigma\right) \ | \ e^{-\eta} \ \text{in} \ (E, \| \cdot \|).
  \end{align*} 
    \item $\mathcal A_K \subset \{\left\Vert \phi\right\Vert_{\mathcal L,\mathcal B(y_0, K\sigma \eta^{1/q})} \leq 1-\varepsilon\}$,
   \end{itemize} 
  then with high probability (bigger than $1-Ce^{-c\eta}$ for some constants $C,c>0$), the random equation
  \begin{align*}
    Y = \phi(Y)
  \end{align*}
  admits a unique solution $Y\in E$ satisfying the linear concentration:
  \begin{align*}
    Y \in \mathcal E_q \left(\sigma\right) \ | \ e^{-\eta}.
  \end{align*}
\end{theorem}

\begin{proof}
  We know that, $(\phi(y_0) \ | \ \mathcal A_1) \in y_0 \pm \mathcal E_q(\sigma)$, so in particular Proposition~\ref{pro:tao_conc_exp} implies that there exist three constants $C,c,K>0$ such that:
  \begin{align*}
    \mathbb P \left(\left\Vert \phi(y_0) - y_0\right\Vert\geq K\sigma\eta^{1/q}\varepsilon \ | \ \mathcal A_1\right) \leq Ce^{-\eta/c},
  \end{align*}
  Let us then note $\mathcal A_\nu = \mathcal A_1 \cap  \mathcal A_{K} \cap \{\left\Vert \phi(y_0) - y_0\right\Vert\geq K\sigma\eta^{1/q}\varepsilon\}$, we can bound $\mathbb P(\mathcal A_\nu) \leq C'e^{-\eta/c'}$ for some constants $C',c'$, and we know from Lemma~\ref{lem:boule_de_point_fixe} and our hypotheses that $\forall k\in \mathbb N$:
  \begin{align*}
    \phi^k(y_0) \in \mathcal B(y_0,K\sigma\eta^{1/q}),
  \end{align*}
  (since $y_0 \in \mathcal B(y_0,K\sigma\eta^{1/q}\varepsilon)$).
  Therefore since $\mathcal A_{Y} \subset\mathcal A_{K} \subset \{\|\phi\|_{\mathcal L, \mathcal B(y_0,K\sigma\eta^{1/q})} \leq 1-\varepsilon\}$, the sequence $(\phi^k(y_0))_{k\in \mathbb N}$ is a Cauchy sequence and it converges to a random vector $Y \in E$ satisfying $Y = \phi(Y)$ ($E$ is complete because it is reflexive).

  We now want to show that $Y$ is concentrated. Following the steps of the proof of Proposition~\ref{pro:COncentration_solution_conentrated_equation_phi_affine_k_leq_log_eta}, one sees that it is sufficient to show that $\left\Vert \tilde Y - y_0\right\Vert = O(\sigma \eta^{1/q}  )$ for $\tilde Y$ defined as the unique solution to the equation $\tilde Y = \mathbb E_{\mathcal A_\nu}[\phi^k(\tilde Y)]$ already introduced in the proof of Proposition~\ref{pro:COncentration_solution_conentrated_equation_phi_affine_k_leq_log_eta} and $k = \lceil - \frac{\log(\eta)}{q \log(1-2\varepsilon)} \rceil$. Let us bound:
  \begin{align*}
    \left\Vert \tilde Y - y_0\right\Vert
    &\leq \left\Vert \mathbb E_{\mathcal A_\nu} \left[\phi^k(\tilde Y) - \phi^k(y_0)\right]\right\Vert + \left\Vert \mathbb E_{\mathcal A_\nu}[\phi^k(y_0) - y_0 ]\right\Vert\\
    &\leq \mathbb E_{\mathcal A_\nu} \left[\left\Vert \phi^k(\tilde Y) - \phi^k(y_0)\right\Vert  \right] + \mathbb E_{\mathcal A_\nu} \left[\left\Vert \phi^k(y_0) - y_0\right\Vert \right]\\
    &\leq \mathbb E_{\mathcal A_\nu} \left[\|\phi \|^k \right] \left\Vert \tilde Y - y_0\right\Vert + \mathbb E_{\mathcal A_\nu} \left[\sum_{i=1}^k \|\phi \|_{\mathcal L}^i\left\Vert \phi(y_0) - y_0 ]\right\Vert \right]\\
    &\leq (1-\varepsilon) \left\Vert \tilde Y - y_0\right\Vert + K\sigma \eta^{1/q}.
  \end{align*}
  Thus $\left\Vert \tilde Y - y_0\right\Vert \leq \frac{K\sigma \eta^{1/q}}{\varepsilon}  $, so that, noting $\mathcal A_\nu' \equiv \mathcal A_{\frac{K\sigma \eta^{1/q}}{\varepsilon}} \cap \mathcal A_\nu$, by hypothesis:
  $$\phi^k(\tilde Y) \overset{\mathcal A_\nu'}\propto \mathcal E_q \left(\sigma\right) \ | \ e^{-\theta},$$
  the rest of the proof is then exactly the same as the proof of Proposition~\ref{pro:COncentration_solution_conentrated_equation_phi_affine_k_leq_log_eta}.
\end{proof}
Let us now give a more flexible result involving the norms $\|\cdot\|_{\mathcal B(y_0, r)}$ for $r>0$ but also the semi-norms $\|\cdot\|_{\mathcal L,\mathcal B(y_0, r)}$.
The following Lemma might seem a bit complicated and artificial, it is however perfectly adapted to the requirement of Theorem~\ref{the:lipschitz_COncentration_solution_conentrated_equation_Phi_concentre_norme_infinie} generalizing Theorem~\ref{the:COncentration_lineaire_solution_concentrated_equation_lipschitz} to the case of Lipschitz concentrated mappings $\phi$ locally Lipschitz. 
\begin{lemma}\label{lem:concentration_composition_intelligente_2}
  Given a normed vector space $(E,\|\cdot \|)$ whose norm degree is noted $\eta$, a vector $y_0 \in E$ and a (sequence of) random mapping $\phi\in \mathcal F^\infty(E)$, let us suppose that there exists  a (sequence of) constant $\varepsilon>0$ ($\varepsilon\geq O(1)$) such that for any constant $K>0$, there exists  a (sequence of) event $\mathcal A_K$:
   $$\phi \overset{\mathcal A_K}\propto \mathcal E_q \left(\sigma\right) \ | \ e^{-\eta} \ \ \text{in  } \ \ \left( \mathcal F(E), \| \cdot \|_{\mathcal B(y_0,K\sigma \eta^{1/q})} \right),$$ 
  and there exist two constants $C,c$ such that 
  \begin{align*}
    \mathbb P \left(\|\phi \|_{\mathcal L, \mathcal B(y_0, K\sigma \eta^{1/q})} \geq 1- \varepsilon \ | \ \mathcal A_K\right) \leq C e^{-c\eta},
  \end{align*}
  then for any constant $K'$ ($K' \geq O(1)$) we have:
  $$\phi^m \propto \mathcal E_q\left(\sigma \right) \ | \ e^{-\theta'} \ \ \text{in} \ \ ( \mathcal F^\infty(E), \| \cdot \|_{\mathcal B(y_0,K'\sigma \eta^{1/q})}),$$
\end{lemma}
\begin{proof}
  As in the proof of Theorem~\ref{the:COncentration_lineaire_solution_concentrated_equation_lipschitz}, let us introduce three constants $C,c>0$ and $K >K'/\varepsilon$ such that:
  \begin{align*}
    \mathbb P \left(\left\Vert \Phi(y_0) - y_0\right\Vert\geq K \sigma \eta^{1/q} \varepsilon  \ | \ \mathcal A_1\right) \leq Ce^{-\eta/c},
  \end{align*}  
  Let us set: 
  $$\mathcal A_{\phi^\infty} \equiv \mathcal A_{K\varepsilon} \cap \mathcal A_1 \cap \left\{\|\phi \|_{\mathcal L, \mathcal B_E(y_0, K\sigma \eta^{1/q})} \leq 1- \varepsilon\right\},$$
  We know from Lemma~\ref{lem:boule_de_point_fixe} that 
  for all $k\in \mathbb N$, and for all $f \in \mathcal A_{\phi^ \infty}$ (recall that we identify $\mathcal A_{\phi^ \infty}$ with $\phi(\mathcal A_{\phi^ \infty})$):
  \begin{align*}
    f^k(y) \in \mathcal B(y_0, K),
  \end{align*}
  since $\|y - y_0\| \leq K'\sigma \eta^{1/q} \leq K\sigma \eta^{1/q} \varepsilon$.
  For any $f,g \in \mathcal A_{\phi^\infty}$:
  \begin{align*}
    \|f^m - g^m\|_{\mathcal B(y_0,K'\sigma \eta^{1/q})}
    &\leq \sum_{i=1}^m \| f \|_{\mathcal L, \mathcal B(y_0,K\sigma \eta^{1/q})}^{i-1} \|f(g^{m-i}(y)) - g(g^{m-i}(y)) \| \\
    & \leq \frac{1}{\varepsilon} \|f - g \|_{\mathcal B(y_0,K\varepsilon\sigma \eta^{1/q})} .
   \end{align*}
   Thus the mapping $f \mapsto f^m$ is $\frac{1}{\varepsilon}$-Lipschitz on $(\mathcal A_{\phi^\infty}, \| \cdot \|_\mathcal B(y_0,K'\sigma \eta^{1/q}))$, which directly implies the result of the lemma.
\end{proof}

The next theorem is an important improvement (allowed by Lipschitz concentration) of Proposition~\ref{pro:lipschitz_COncentration_solution_conentrated_equation_hypothese_Psi_k_concentre} in that it only take as hypothesis the concentration of $\phi$ (and not of all its iterates). It also has the advantage to assume only a local control on the Lipschitz character of $\phi$ to describe a larger range of settings.
\begin{proposition}\label{pro:lipschitz_COncentration_solution_conentrated_equation_phi_concentre_norme_infinie_localement_lipschitz}

  Let us consider a (sequence of) reflexive vector space $(E, \| \cdot \|)$ we suppose that $\|\cdot \|$ norm degree is noted $\eta$ (of course $\eta \geq O(1)$), a (sequence of) random mapping $\phi\in \lip(E)$, a given constant $\varepsilon>0$ ($\varepsilon\geq O(1)$) and $\mathcal A_\phi$, a (sequence of) highly probable events such that $\mathbb P \left(\mathcal A_\phi\right) \leq C e^{-c\eta}$ for two constants $C,c>0$ and $\mathcal A_\phi\subset \{\|\phi\|_{\mathcal L} \leq 1-\varepsilon\}$. Introducing $y_0$, a (sequence of) deterministic vector $y_0 \in E$ satisfying:
  $$y_0 = \mathbb E[\phi(y_0)],$$
  if we suppose that for any constant $K>0$: 
  $$\phi \propto \mathcal E_q(\sigma) \ | \ e^{-\eta} \ \ \text{in  } \ \ ( \mathcal F(E), \| \cdot \|_{\mathcal B(y_0,K\sigma \eta^{1/q})}),$$ 
  then there exists an highly probable event $\mathcal A_\nu$, such that, under $\mathcal A_\nu$, the random equation
  \begin{align*}
    Y = \phi(Y)
  \end{align*}
  admits a unique solution $Y\in E$ satisfying the Lipschitz concentration:
  \begin{align*}
    Y \propto \mathcal E_q \left(\sigma\right) \ | \ e^{-\eta}.
  \end{align*}
  
\end{proposition}

\bibliographystyle{elsarticle-harv}
\bibliography{C:/Users/cosme/Documents/Travail/These/Biblio/biblio}

\end{document}